\documentclass[12pt]{amsart}
\usepackage{amsmath, amsthm, amscd, amsfonts, amssymb, graphicx, color, mathabx, tikz, caption, enumitem, mathtools, comment, pgfplots,amsrefs}
    \pgfplotsset{compat=1.16}
    \usepgfplotslibrary{fillbetween}
    \usetikzlibrary{quotes,angles} 
    \captionsetup{justification=centering}
\setcounter{tocdepth}{3} 
    \makeatletter
    \def\l@subsection{\@tocline{2}{0pt}{2.9pc}{5pc}{}}
    \def\l@subsubsection{\@tocline{2}{0pt}{5pc}{7.5pc}{}}
    \makeatother
\usepackage[margin=2.8cm]{geometry}

\usepackage[bookmarksnumbered, colorlinks, plainpages]{hyperref}
    \hypersetup{colorlinks=true, linkcolor=blue, citecolor=magenta, filecolor=magenta, urlcolor=cyan}
\allowdisplaybreaks
    \mathtoolsset{showonlyrefs,showmanualtags} 
\numberwithin{equation}{section}
\newtheorem{theorem}[equation]{Theorem}
\newtheorem{lemma}[equation]{Lemma}
\newtheorem{proposition}[equation]{Proposition}
\newtheorem{corollary}[equation]{Corollary}
\theoremstyle{definition}

\newtheorem{conjecture}[equation]{Conjecture}

\theoremstyle{remark}

\newcommand{\N}{\mathbb{N}}

\newcommand{\C}{\mathbb{C}}
\newcommand{\Z}{\mathbb Z}

\newcommand{\A}{\mathbb{A}}

\DeclareMathOperator{\Hi}{Hi} 
\DeclareMathOperator{\Lo}{Lo} 
 
\DeclareMathOperator{\lcm}{lcm}

\begin{document}
\title[Gaussian Primes: Goldbach's Conjecture and Improving Estimates]{Averages over the Gaussian Primes: Goldbach's Conjecture and Improving Estimates}

\author[Giannitsi]{Christina Giannitsi}
    \address{School of Mathematics, Georgia Institute of Technology, Atlanta GA 30332, USA}
    \email{cgiannitsi3@math.gatech.edu}
\author[Krause]{Ben Krause}
    \address{Department of Mathematics, King’s College, London, WC2R 2LS, UK}
    \email{ben.krause@kcl.ac.uk}
\author[Lacey]{Michael T. Lacey} 
    \address{ School of Mathematics, Georgia Institute of Technology, Atlanta GA 30332, USA}
    \email {lacey@math.gatech.edu}
\author[Mousavi]{Hamed Mousavi}
    \address{School of Mathematics, Georgia Institute of Technology, Atlanta GA 30332, USA}
    \email{hmousavi6@gatech.edu}
\author[Rahimi]{Yaghoub Rahimi}
    \address{School of Mathematics, Georgia Institute of Technology, Atlanta GA 30332, USA}
    \email{yaghoub.rahimi@gatech.edu}

\begin{abstract}  
We prove versions of Goldbach conjectures for Gaussian primes in arbitrary sectors.   
Fix an interval $\omega \subset \mathbb{T}$. There is an integer $N_\omega $, so that 
every odd integer $n$ with $N(n)>N_\omega $ 
and $\textup{dist}( \arg(n) , \mathbb{T}\setminus \omega ) > (\log N(n)) ^{-B}$, 
is a sum of three Gaussian primes $n=p_1+p_2+p_3$, with 
$\arg (p_j) \in \omega  $, for $j=1,2,3$. 
A density version of the binary Goldbach conjecture is proved. 
Both follow from a  High/Low decomposition of the Fourier transform of averages over Gaussian primes, defined as follows.  
Let $\Lambda(n) $ be the Von Mangoldt function for the Gaussian integers and consider the norm function $N:\Z[i]\rightarrow \Z^+$, $\alpha + i \beta \mapsto \alpha ^2 + \beta ^2$. Define the averages 
$$A_Nf(x)=\frac{1}{N}\sum_{N(n)<N}\Lambda(n)f(x-n).$$
Our decomposition also proves the   $\ell ^p$ improving estimate
$$ 
\lVert A_N f \rVert _{\ell^{p'}}\ll   N ^{1/p'- 1/p} \lVert f\rVert _{\ell^p}, 
\qquad 1<p\leq 2., 
$$
 \end{abstract}

\maketitle 
\tableofcontents

\bigskip

\section{Introduction}
The principle goal of this paper is to establish a density version of the strong Goldbach conjecture for Gaussian integers, restricted to sectors in the complex plane. 

Briefly, for  integers $n \in \Z[i]$ we write $N(a+ib) = a^2 + b^2$, 
and if we express $ n = \sqrt {N(n)}e^ { i \theta }$, we set $\arg (n) = \theta $; the units are $\pm1$ and $\pm i$.  
The field $\Z[i]$ is a Euclidean domain, inside of which an integer $p= a+ib$ is a prime if it has no integer factor $q$ with $N(q) < N(p)$.  
The Gaussian primes can take one of two forms. 
First if $a$ and $b$ are non-zero, then  $N(p) = a^2 + b^2$ is prime. 
Second, if $a$ or $b$ are zero, then it is of the form of a unit times 
 a prime in $\N$ congruent to $3 \mod 4. $

As in the case of $\Z$, the Goldbach conjecture over $\Z[i]$ requires a notion of evenness:
a Gaussian integer $x = a+ib$ is \emph{even} if and only if $N (x) = a^2 + b^2$ is even.  Evenness is equivalent to either condition below. 
\begin{enumerate}
    \item  The integer $a+b$ is even. 
    \item  The  Gaussian integer $1+i$ divides $x$.  
\end{enumerate}
An integer which is not even is \emph{odd}. 

The main result concerns Goldbach type results.  
We show that there are very few even integers which are \emph{not} a sum of two primes. And, we do so even  with significant restrictions on the arguments of the integers involved. Similarly, all sufficiently large  odd integers are a sum of three primes.  
  The only  prior results in this direction we could find  in the literature correspond to the entire complex plane.  Below, we allow arbitrary sectors.  

 \begin{theorem} \label{t:Goldbach}  Fix an integer  $B>10$  and interval $\omega\subset \mathbb{T}$.  There exists an $N_{\omega, B} >0$, such that for all integers $N> N _{\omega, B}$, the following holds.  

 \begin{enumerate}
    \item  Every odd integer $n$ with $N(n)> N_{\omega , B}$ and $ \textup{dist}( \arg(n) , \mathbb{T}\setminus \omega ) > (\log N(n)) ^{-B}$, 
    is a sum of three Gaussian primes  $n= p_1 + p_2+p_3$, with $ \arg(p_j) \in \omega $ for $j=1,2,3$.  

     \item 
     We have $\lvert E_2(\omega , N) \rvert \ll \frac{ N}{\log(N)^B}$, 
     where 
     $E_2(\omega ,N)$ is the set of   \emph{even} Gaussian integers with $
 N(n)<N $ and  $  \arg(n) \in \omega$, 
 which \emph{cannot} be represented as sum of two Gaussian primes  $n= p_1 + p_2$, with  $ \arg(p_j) \in \omega $ for $j=1,2$.  
    
\end{enumerate}
\end{theorem}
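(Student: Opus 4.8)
The plan is to reduce both statements to the circle method applied to the exponential sum attached to the weighted averages $A_N$, exploiting the High/Low decomposition of $\widehat{A_N}$ promised in the abstract. First I would record the relevant generating object: for $\theta \in \mathbb{T}^2$ set $S_N(\theta) = \sum_{N(n)<N}\Lambda(n)\mathbf{1}_{\arg(n)\in\omega}\,e(n\cdot\theta)$, where $n\cdot\theta$ is the usual pairing of $\Z[i]\cong\Z^2$ with $\mathbb{T}^2$. The number of representations of an integer $m$ (with $\arg(m)\in\omega$) as $p_1+p_2+p_3$ with $\arg(p_j)\in\omega$, weighted by $\prod\Lambda(p_j)$, is exactly $\int_{\mathbb{T}^2} S_N(\theta)^3 e(-m\cdot\theta)\,d\theta$; similarly the binary count is $\int_{\mathbb{T}^2} S_N(\theta)^2 e(-m\cdot\theta)\,d\theta$. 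The High/Low decomposition writes $\widehat{A_N} = \widehat{A_N}^{\mathrm{Hi}} + \widehat{A_N}^{\mathrm{Lo}}$ (equivalently $S_N = S_N^{\mathrm{Lo}} + S_N^{\mathrm{Hi}}$), where $S_N^{\mathrm{Lo}}$ is supported on a union of small balls around rationals with small denominator (the major arcs, in two dimensions) and carries the expected main term, while $S_N^{\mathrm{Hi}}$ satisfies a nontrivial $L^\infty$ bound with a saving of a power of $\log N$ — this is precisely the minor-arc input one usually gets from Vinogradov-type estimates, here packaged analytically.

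For the ternary statement (1), I would expand $S_N^3 = (S_N^{\mathrm{Lo}}+S_N^{\mathrm{Hi}})^3$ and treat the purely-Lo term as the main term and every term containing at least one factor of $S_N^{\mathrm{Hi}}$ as an error. The main term $\int (S_N^{\mathrm{Lo}})^3 e(-n\cdot\theta)$ is evaluated by the standard local-global computation: it factors as $\mathfrak{S}(n)\cdot(\text{archimedean/sector density})\cdot N^{1/2}$ up to lower order, where $\mathfrak{S}(n)$ is a singular series that is bounded below uniformly for odd $n$ (this is where oddness of $n$ enters, exactly as in classical Vinogradov), and the sector density is positive and of the right size provided $\arg(n)$ is well inside $\omega$ — quantitatively, $\mathrm{dist}(\arg(n),\mathbb{T}\setminus\omega) > (\log N(n))^{-B}$ is exactly what keeps this factor from degenerating faster than the $\mathrm{Hi}$-savings. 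The error terms are controlled by pulling out one $\|S_N^{\mathrm{Hi}}\|_\infty \ll N^{1/2}(\log N)^{-C}$ (for $C$ as large as we like relative to $B$, by iterating the Low decomposition if necessary) and bounding the remaining two factors in $L^2$ via Parseval and the prime number theorem for $\Z[i]$, which gives $\|S_N\|_2^2 \ll N\log N$. This yields a positive lower bound on the number of representations, hence at least one, for all such $n$.

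For the binary/density statement (2), I would run the standard exceptional-set (Cauchy–Schwarz / second-moment) argument: it suffices to show $\sum_{m} \big| r_2(m) - M(m) \big|^2 \ll N^{2}(\log N)^{-2B-\epsilon}$ where $r_2(m)=\int S_N^2 e(-m\cdot\theta)$ and $M(m)=\int (S_N^{\mathrm{Lo}})^2 e(-m\cdot\theta)$ is the expected main term (which again is $\gg N^{1/2}\log^{-O(1)}$ for even $m$ with $\arg(m)$ suitably inside $\omega$). By Parseval in $m$, $\sum_m |r_2(m)-M(m)|^2 = \int |S_N^2 - (S_N^{\mathrm{Lo}})^2|^2\,d\theta \le \int (|S_N^{\mathrm{Hi}}|\,|S_N+S_N^{\mathrm{Lo}}|)^2 \ll \|S_N^{\mathrm{Hi}}\|_\infty^2\,\|S_N\|_2^2 \ll N^{1-2c}\cdot N\log N$ for the $\log$-saving $c$ from the High/Low decomposition; choosing the decomposition depth so that $2c$ exceeds $2B$ (plus a bit) closes the estimate, and the set of $m$ where $r_2(m)$ fails to be positive is then forced to have size $\ll N(\log N)^{-B}$ after removing the exceptional archimedean/sector region, whose contribution is itself $\ll N(\log N)^{-B}$ by a direct count.

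The main obstacle I anticipate is not the circle-method bookkeeping but establishing the two quantitative inputs in the required uniform-in-$\omega$ form: (i) the $L^\infty$ bound $\|S_N^{\mathrm{Hi}}\|_\infty \ll N^{1/2}(\log N)^{-C}$ with $C$ arbitrarily large, uniformly over the sector $\omega$ — since the indicator $\mathbf{1}_{\arg(n)\in\omega}$ is not smooth, one must first replace it by a smooth majorant/minorant at scale $(\log N)^{-B}$ and absorb the boundary layer, which is exactly why the hypothesis $\mathrm{dist}(\arg(n),\mathbb{T}\setminus\omega)>(\log N(n))^{-B}$ appears; and (ii) the lower bound on the combined singular series times sector density, uniformly for odd (resp. even) $n$ in the allowed region. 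Both of these should follow from the High/Low machinery developed in the body of the paper together with Siegel–Walfisz for Hecke characters / the prime number theorem in sectors, but verifying the uniformity carefully — tracking how the $(\log N)^{-B}$ margin interacts with the $\log$-power saving — is where the real work lies.
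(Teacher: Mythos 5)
Your overall architecture is essentially the paper's: the High/Low decomposition of $\widehat{A_N^\omega}$ plays the role of the major/minor arc dissection, the main term comes from the purely Low contribution and factors as (truncated singular series) times an archimedean sector factor, parity enters only through the local factor at the prime $1+i$, every term containing a High factor is disposed of by an $L^\infty(\mathrm{Hi})\times L^2$ bound, the binary statement is handled by a second-moment/Chebyshev exceptional-set argument while the ternary one is pointwise, and you correctly identified why the hypothesis $\textup{dist}(\arg n,\mathbb{T}\setminus\omega)>(\log N(n))^{-B}$ is needed: the archimedean factor degenerates like a power of that distance (in the paper, Lemma \ref{l:convolve} gives $M_N^\omega\ast M_N^\omega\ast M_N^\omega(x)\gg \delta(x)^2/N$), and this must beat the $(\log N)$-power saving of the High term.

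Two caveats. First, your quantitative ledger does not close as written because of a scale slip: with $N$ bounding the norm $N(n)=\lvert n\rvert^2$, the sum $S_N$ has $\asymp N$ terms, so the trivial bound is $N\log N$; the attainable High/minor-arc bound is $\lVert S_N^{\mathrm{Hi}}\rVert_\infty \ll N(\log N)^{-C}$ (your claimed $N^{1/2}(\log N)^{-C}$ would be square-root cancellation, which no Vinogradov-type estimate provides), and the main terms are $\asymp \delta(n)^2 N^2$ for the ternary count and $\asymp \delta(n) N$ for the binary count, not $N^{1/2}$. Taken literally, your ternary error $\lVert S_N^{\mathrm{Hi}}\rVert_\infty\lVert S_N\rVert_2^2 \approx N^{3/2}(\log N)^{1-C}$ would swamp your claimed main term $N^{1/2}$; once the exponents are corrected the comparison works exactly as in the paper. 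Second, the two inputs you defer are where the paper's specific choices matter: the Low part is built from square-free $Q$-smooth moduli, which is what lets the truncated singular series be evaluated as an Euler product with the $1+i$ factor isolated (Lemma \ref{l:HLConstantlemma}) and lets the discrepancy between $\mathrm{Lo}\ast\mathrm{Lo}$ and the completed main term be controlled off a small set by the Ramanujan-sum inequality (Proposition \ref{p:BourgainRamanujan}); and the paper does not smooth the sector indicator at scale $(\log N)^{-B}$ — it proves the Vinogradov bound and the major-arc asymptotics directly for sums restricted to the sector (Theorem \ref{vingradovgaussian}, Lemma \ref{majorarcestimate}), the $(\log N)^{-B}$ margin being needed only for the archimedean lower bound, not to absorb a boundary layer in the exponential sums.
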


In the ternary case, note that we require that the odd integer be sufficiently far from the boundary of the sector. For otherwise, we run up against issues related to the Landau conjecture on primes of the form $n ^2 +1$.  In the binary case, the statement is the standard formulation, due to the presence of the exceptional case.

Our proof derives from a detailed study of the analytic properties of averages over the Gaussian primes. 
In particular, let $f:\Z[i]\rightarrow \mathbb{C}$ be an arithmetic function, and for $x\in \Z[i]$ define
\begin{align*}
A_Nf(x)=\frac{1}{N}\sum_{N(n)<N} \Lambda(n)f(x-n)
\end{align*}
where the von Mangoldt function on $\Z[i]$ is defined by 
\begin{align*}
 \Lambda(n)=\begin{cases}
\log(N(\rho)) & \text{ if } n=\rho^{\alpha} \text{ and } \rho\in \Z[i] \text{ is irreducible} \\
0 & \text{otherwise.}
\end{cases}
\end{align*}

 We prove an improving estimate for the averages above: The averages of $\ell^1 (\Z^2)  $ 
functions are nearly bounded.  

\begin{theorem} \label{t:fixedscale} 
For all $N$, and $1< p \leq 2$, we have, 
\begin{align*}
\langle A_N f,g\rangle \ll N ^{1- 2/p}  \lVert f \rVert_p \lVert g \rVert_p. 
\end{align*}
Equivalently, for all $1 < p \leq 2$, whenever $f$ is supported on a cube, $Q$, of side length $\sqrt{N}$,
\begin{align*}
\frac{ \| A_N f \|_{p'}}{|Q|^{1/p'}} \ll \frac{ \| f \|_{p}}{|Q|^{1/p}}  
\end{align*}
\end{theorem}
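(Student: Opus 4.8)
The plan is to prove the bilinear form estimate $\langle A_N f, g\rangle \ll N^{1-2/p}\|f\|_p\|g\|_p$ by interpolation: it suffices to establish the two endpoints $p=2$ and $p=1$ (the latter in a dual/restricted-type sense) and then invoke Riesz--Thoren, since the claimed bound reads $\|A_N\|_{\ell^p\to\ell^{p'}}\ll N^{1/p'-1/p}$ with exponent linear in $1/p$ on the segment $[1/2,1]$. The $p=2$ estimate is the heart of the matter and is exactly where the Fourier-analytic High/Low machinery is used; the $p=1$ estimate is soft.

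First I would dispose of the trivial endpoint. For $p=1$ we want $\|A_N f\|_{\ell^\infty}\ll N^{-1}\|f\|_{\ell^1}$, i.e. $\|A_N\|_{\ell^1\to\ell^\infty}\ll 1/N$. Since $A_N f = \mu_N * f$ where $\mu_N(n) = N^{-1}\Lambda(n)\mathbf 1_{N(n)<N}$, Young's inequality gives $\|A_N f\|_\infty \le \|\mu_N\|_\infty \|f\|_1$, and $\|\mu_N\|_\infty \le N^{-1}\max_{N(n)<N}\Lambda(n)\ll N^{-1}\log N$. This loses a logarithm, so instead I would argue $\|\mu_N\|_{\ell^1}\ll 1$ directly from the prime number theorem for $\Z[i]$ (equivalently Landau's theorem), which gives $\sum_{N(n)<N}\Lambda(n) = N + o(N)$, hence $\mu_N$ is (essentially) a probability measure; combined with the $\ell^1\to\ell^\infty$ bound $\ll N^{-1}\log N$ this is enough to interpolate up to, but not including, $p=1$, and a short limiting/summation-by-parts argument recovers the stated range $1<p\le 2$. (Alternatively one feeds the $\log$-loss into the interpolation and absorbs it, since at $p$ strictly bigger than $1$ the endpoint is not needed sharply.)

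The main work is the $L^2$ bound: $\langle A_N f,g\rangle \ll \|f\|_2\|g\|_2$, i.e. $\|\widehat{\mu_N}\|_{L^\infty(\mathbb T^2)}\ll 1$. This is where I would invoke the High/Low decomposition of $\widehat{\mu_N}$ advertised in the abstract: split $\widehat{\mu_N} = \widehat{\mu_N^{\mathrm{Hi}}} + \widehat{\mu_N^{\mathrm{Lo}}}$ according to whether the frequency $\xi\in\mathbb T^2$ is close (in the sense of Dirichlet/Hardy--Littlewood circle-method rationals with small denominator, at scale a power of $\log N$) to a rational point with controlled denominator or not. On the \emph{minor arcs} (the "Hi" piece, away from such rationals) one uses Gaussian-integer Weyl sums / Vinogradov-type bounds for $\sum \Lambda(n) e(n\cdot\xi)$ to get $|\widehat{\mu_N^{\mathrm{Hi}}}(\xi)| \ll (\log N)^{-A}$, which is even better than $O(1)$. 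On the \emph{major arcs} (the "Lo" piece) one extracts the expected main term: $\widehat{\mu_N}(\xi)$ is approximately a product of a Ramanujan-type singular-series factor, which is $O(1)$, times a rescaled Fourier transform of the normalized indicator $N^{-1}\mathbf 1_{N(n)<N}$ of the disk, which is also $O(1)$ uniformly. Summing the two contributions gives $\|\widehat{\mu_N}\|_\infty\ll 1$. I expect the main obstacle to be precisely the minor-arc estimate for Gaussian-integer exponential sums over primes with the two-dimensional frequency variable $\xi=(\xi_1,\xi_2)$: one must set up the correct notion of "denominator" for a Gaussian rational approximating $\xi$, handle the distinction between the two types of Gaussian primes (those with prime norm versus the rational primes $\equiv 3\bmod 4$ sitting on the axes), and push a Vaughan/Vinogradov bilinear-sieve decomposition of $\Lambda$ through to obtain power-of-log savings uniformly in $\xi$ — this is the technical core and presumably occupies the body of the paper preceding this theorem.

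Finally, with the two endpoints in hand, the stated inequality on $1<p\le 2$ follows by complex interpolation of the bilinear form $(f,g)\mapsto\langle A_N f,g\rangle$ between $(\ell^2,\ell^2)$ with constant $O(1)$ and $(\ell^1,\ell^1)$ with constant $O(1/N)$, yielding constant $O(N^{-(2/p-1)}) = O(N^{1-2/p})$ at exponent $p$; the equivalent "support on a cube $Q$ of side $\sqrt N$" formulation is just the substitution $|Q|\asymp N$ and relabeling, together with the trivial localization that $A_N$ does not increase supports by more than $O(\sqrt N)$ in each coordinate. No genuinely new idea is needed in this last step beyond bookkeeping.
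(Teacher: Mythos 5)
There is a genuine gap: your interpolation scheme cannot produce the log-free bound that the theorem asserts. The two endpoints you propose are $\lVert A_N \rVert_{\ell^2 \to \ell^2} \ll 1$ and $\lVert A_N \rVert_{\ell^1 \to \ell^\infty} \ll N^{-1}\log N$, and the second one is sharp as stated, since the kernel genuinely attains values of size $N^{-1}\log N$ on primes of norm near $N$. Riesz--Thorin between these two points (or between them and the harmless $\ell^1\to\ell^1$ bound coming from $\lVert \mu_N\rVert_{\ell^1}\ll 1$) forces the constant $(N^{-1}\log N)^{2/p-1} = N^{1-2/p}(\log N)^{2/p-1}$ at every $1<p<2$: the target points $(1/p,1/p')$ lie on the edge of the Riesz diagram joining $(1,0)$ and $(1/2,1/2)$, so the representation is unique and the factor $(\log N)^{2/p-1}$ cannot be traded away against the third vertex. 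There is no slack in the power of $N$ either (the stated exponent is exactly what the unweighted average of the ball gives), so the parenthetical remark that the log-loss can be ``absorbed'' has no mechanism behind it; your argument proves a strictly weaker statement than Theorem \ref{t:fixedscale}. Relatedly, the place where you invest all the work --- the bound $\lVert \widehat{\mu_N}\rVert_\infty \ll 1$ --- does not need the circle method at all: it follows from $\lVert \mu_N \rVert_{\ell^1} = N^{-1}\sum_{N(n)<N}\Lambda(n)\ll 1$, a Chebyshev/PNT-type input, so the High/Low machinery is being spent on the easy endpoint while the actual difficulty (beating the logarithm) is left unaddressed.

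The paper's proof avoids the loss by never interpolating against the crude $\ell^1\to\ell^\infty$ endpoint globally. It reduces to indicators $f=\mathbf{1}_F$, $g=\mathbf{1}_G$ and splits into cases according to the size of $N^{-2}\lvert F\rvert\,\lvert G\rvert$: when this product is smaller than a negative power of $\log N$, the trivial bound $\Lambda \leq \log N$ already wins; otherwise one chooses the High/Low cutoff $Q$ as a function of $\lvert F\rvert\,\lvert G\rvert$ (a power of $\log N$), uses the operator-norm gain $\lVert \widehat{\operatorname{Hi}}\rVert_\infty \ll Q^{-1+\epsilon}$ from \eqref{e:Hi} for the High part, and for the Low part uses the pointwise domination \eqref{e:LOOO}, which says the Low kernel is controlled by a smooth (non-arithmetic) average up to a $Q^{\epsilon}$ loss, the Ramanujan-sum weights being controlled on average via Proposition \ref{p:BourgainRamanujan}. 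Balancing the two contributions through the choice of $Q$ gives restricted strong type $(p,p')$ with no log factor, and interpolation then yields the open range $1<p<2$. If you want to salvage your outline, the missing ideas are exactly this density threshold, the functions-dependent choice of $Q$, and an $\ell^{1+\epsilon}$-type bound for the Low part; a single frequency-side $L^\infty$ bound for $\widehat{\mu_N}$ cannot substitute for them.
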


To prove this, we need to develop many expected results, including Ramanujan type identities, and the Vinogradov type estimates for the Fourier transform of $A_N$.  
Not being able to identify clear cut references for many of these estimates, we develop them below. 
We then develop a High/Low decomposition of the Fourier transform of $A_N$. 
A particular innovation is our approach to the major arc estimate in Lemma \ref{majorarcestimate}, which are  
typically proved by Abel summation. 
But that method is poorly adapted to the question at hand. We develop 
a different method.   Also noteworthy is that the Low term is defined in terms of smooth numbers, a technique used in \cite{210110401L}. 
Smoothness facilitates the proof of Lemma \ref{l:HLConstantlemma}. 
With this decomposition in hand, the deduction of the Theorems is relatively straight forward.

Indeed, we develop this for the more specialized averages, over sectors of the complex plane.  
For an interval $\omega \subset \mathbb{T}$, extend the definition of the averages to this setting.  
\begin{align} \label{e:omega}
A_N ^{\omega }f(x)=\frac{2 \pi } { \lvert \omega  \rvert N} \sum_{\substack{N(n)<N \\ \arg(n) \in \omega  }} \Lambda(n)f(x-n). 
\end{align}
To address the binary Goldbach question, note that 
 \begin{equation}
 A_N ^{\omega } \ast A_N ^{\omega } (m) = 
 \frac{4 \pi^2 } { \lvert \omega  \rvert ^2  N ^2 } 
 \sum _{ \substack{ N(n_1), N(n_2) < N  \\  \arg (n_1) , \arg (n_2) \in \omega  \\ n_1 + n_2 =m}} 
 \Lambda (n_1) \Lambda (n_2)  
 \end{equation}
 The High/Low decomposition for the $A_N ^{\omega }$ can be leveraged to study the sum above.  This is the path we follow 
 to prove Theorem \ref{t:Goldbach} in \S\ref{s:Goldbach}.

\bigskip 
Our motivations come from number theory and analysis. 
  In this setting, 
the binary  (and higher order) 
Goldbach conjectures has been addressed  in the number field setting. 
Mitsui \cite{MR136590}*{\S11,12} addresses the binary and higher order cases of Goldbach's Conjecture in an arbitrary number field. 
 Mitsui finds that  all sufficiently large totally positive odd integers are the sum of odd totally positive primes.   
 He does not address the sector case.  
 Much earlier  work of Rademacher \cites{MR3069422,MR3069434}. 
Rademacher \cite{MR3069422}  studied representations of totally positive integers in a class of quadratic extensions of $\mathbb Z$.  
These papers assume the Generalized Riemann Hypothesis. 
 
Holben and Jordan \cite{MR238760} raised conjectures in the spirit of our Theorem.  Their Conjecture F states 

\begin{conjecture} Each even Gaussian integer  $ n$ with $N(n)>10$ is the sum of two primes $n=p_1+p_2$, 
with  $\arg(n)- \arg(p_j) \leq \pi /6$, for $j=1,2$.
\end{conjecture}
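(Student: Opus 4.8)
The statement is a strengthening of the binary Goldbach problem; unconditionally the method developed here gives only the density version recorded in Theorem~\ref{t:Goldbach}(2), but the natural route to the full conjecture is the Fourier-analytic circle method, carried out on the lattice $\Z[i]\cong\Z^2$ with dual group $\mathbb T^2$ and organised around the High/Low splitting of $\widehat{A_N^{\omega}}$ developed in this paper. Fix an even $n$ with $N:=N(n)$ large and set $\omega=\omega(n):=[\arg n-\tfrac{\pi}{6},\arg n+\tfrac{\pi}{6}]$, an arc of fixed length $\tfrac{\pi}{3}$ centred at $\arg n$; the condition $|\arg(n)-\arg(p_j)|\le\pi/6$ is exactly $\arg p_j\in\omega(n)$, and note $\arg n$ is the centre of $\omega(n)$. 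Using Hecke's equidistribution of Gaussian primes in sectors — equivalently, orthogonality of the characters $n\mapsto (n/|n|)^{4k}$, which is what detects $\arg n_j\in\omega$ — the weighted representation count
\begin{equation*}
R(n):=\sum_{\substack{n_1+n_2=n\\ N(n_1),N(n_2)<N\\ \arg n_1,\arg n_2\in\omega}}\Lambda(n_1)\Lambda(n_2)
\end{equation*}
equals $\tfrac{|\omega|^2N^2}{4\pi^2}\,A_N^{\omega}\ast A_N^{\omega}(n)=\tfrac{|\omega|^2N^2}{4\pi^2}\int_{\mathbb T^2}\widehat{A_N^{\omega}}(\theta)^2 e(-n\cdot\theta)\,d\theta$, and since proper prime powers contribute $\ll\sqrt N(\log N)^2=o(N)$ to $R(n)$, it suffices to prove $R(n)\gg N$.

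\textbf{Main term.} Insert $\widehat{A_N^{\omega}}=\widehat{\Lo}+\widehat{\Hi}$ into the integral. The diagonal term $\int_{\mathbb T^2}\widehat{\Lo}(\theta)^2 e(-n\cdot\theta)\,d\theta$ produces the expected principal term, of order
\begin{equation*}
\asymp\ \mathfrak S(n)\,\mathcal A_\omega(n),\qquad \mathcal A_\omega(n):=\#\{n_1\in\Z[i]: N(n_1),N(n-n_1)<N,\ \arg n_1,\arg(n-n_1)\in\omega\}.
\end{equation*}
The Archimedean factor $\mathcal A_\omega(n)$ is $\asymp N$ for \emph{every} $n$ with $\arg n\in\omega(n)$: the segment $\{tn:0<t<1\}$ together with a two-dimensional tube around it lies in the admissible region precisely because $\arg n$ is the centre of $\omega(n)$, so that region has area $\asymp N$. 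The singular series $\mathfrak S(n)=\prod_{\mathfrak p}\beta_{\mathfrak p}(n)$, whose Euler factors are read off from the Ramanujan-type identities for $\Lambda$ on $\Z[i]$, satisfies, exactly as in the rational case, $\beta_{\mathfrak p}(n)\asymp 1$ with convergent product for $\mathfrak p\neq 1+i$, while $\beta_{1+i}(n)\neq 0$ if and only if $(1+i)\mid n$, i.e. if and only if $n$ is even; hence $\mathfrak S(n)\gg 1$ and the principal term is $\gg N$.

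\textbf{Error term and the real obstacle.} What remains is to control the cross and High--High contributions $\int_{\mathbb T^2}\widehat{\Hi}(\theta)\,\widehat{A_N^{\omega}}(\theta)\,e(-n\cdot\theta)\,d\theta$. Combining the $L^2$ (Vinogradov-type) bound on $\widehat{\Hi}$ with the $L^\infty$ control of $\widehat{A_N^{\omega}}$ supplied by the decomposition, Parseval yields the mean-square estimate $\sum_{N(n)<N}\bigl|\int\widehat{\Hi}\,\widehat{A_N^{\omega}}\,e(-n\cdot\theta)\,d\theta\bigr|^2\ll N^2(\log N)^{-2B}$, whence the error is $\ll N(\log N)^{-B}$ for all but $\ll N(\log N)^{-B}$ integers $n$ with $N(n)<N$ — which is Theorem~\ref{t:Goldbach}(2). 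To reach \emph{every} even $n$ one needs the error to be $o(N)$ \emph{pointwise}, that is, a pointwise minor-arc bound $\sup_{\theta\ \mathrm{minor}}|\widehat{A_N^{\omega}}(\theta)|=o(1)$ with no loss from a possible Landau--Siegel zero of a Hecke $L$-function of $\Q(i)$. This is the genuine difficulty, and it is precisely the obstruction that keeps classical binary Goldbach open; it does follow from the Generalized Riemann Hypothesis, which is the hypothesis under which Rademacher and Mitsui established their number-field Goldbach theorems. Granting that input (to eliminate the exceptional zero), together with a finite verification in the initial range $10<N(n)\le N_0$ where the asymptotic is not yet effective — a computation of the kind carried out by Holben and Jordan — the argument above gives $R(n)\gg N$ for all even $n$ with $N(n)>10$, hence Conjecture~F. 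The crux, and the only step not currently within unconditional reach, is upgrading this mean-square control of the High part to a pointwise bound.
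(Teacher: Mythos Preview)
The statement is a \emph{Conjecture} in the paper, not a theorem; the paper explicitly records that there is no prior result in this direction and offers no proof. You correctly recognise this, and your outline of the circle-method set-up, the main term computation via the singular series (with the parity condition entering through the prime $1+i$), and the observation that the paper's High/Low machinery yields only the density statement of Theorem~\ref{t:Goldbach}(2) are all accurate.

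The genuine gap is in your diagnosis of the obstacle and, in particular, the claim that GRH suffices. You write that what is needed is ``a pointwise minor-arc bound $\sup_{\theta\ \mathrm{minor}}|\widehat{A_N^{\omega}}(\theta)|=o(1)$ with no loss from a possible Landau--Siegel zero.'' But a sup bound of this type is already available \emph{unconditionally}: it is exactly Theorem~\ref{vingradovgaussian}, which gives $(\log N)^{-A}$ on the minor arcs. The actual requirement is that, for each individual even $n$, the oscillatory integral $\int_{\mathfrak m}\widehat{A_N^{\omega}}(\theta)^2\,e(-\langle n,\theta\rangle)\,d\theta$ be $o(N^{-1})$. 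Pulling out a sup leaves $\int_{\mathfrak m}|\widehat{A_N^{\omega}}|$, and Parseval forces $\int_{\mathbb T^2}|\widehat{A_N^{\omega}}|^2\asymp N^{-1}\log N$, so no combination of $L^\infty$ and $L^2$ bounds on $\widehat{A_N^{\omega}}$ can beat the main term $\asymp N^{-1}$ for a fixed $n$. This barrier is structural and is \emph{not} removed by GRH: over $\Z$, Hardy and Littlewood under GRH obtained only an exceptional set of size $O(N^{1/2+\epsilon})$, never zero, and the classical binary Goldbach problem remains open even on GRH. The situation over $\Z[i]$ is no better. The Rademacher and Mitsui references do not help here: their full results are ternary (odd integers), while their binary statements are density results of exactly the type the paper proves. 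So the step you flag as ``the only step not currently within unconditional reach'' is in fact not within reach under GRH either; the obstacle is the Parseval barrier for quadratic problems, not merely a Siegel zero.
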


As far as we know, there is no prior result in this direction, for neither the binary nor the ternary version of the Goldbach conjecture.  But, we also note that our Theorem \ref{t:Goldbach} 
provides a density version of a much stronger result. 
For all $\delta >0$,  most even Gaussian integers $n$ with $N(n)> N_\delta $, are the sum of two primes $n=p_1+p_2$, 
with $\lvert \arg(n)- \arg(p_j) \rvert \leq \delta $, for $j=1,2$.  
Indeed, partition the unit circle into intervals of length $\delta $, and apply our Theorem to each interval. The ternary form of their conjecture holds for large odd integers $n$, seen by covering the unit circle by overlapping intervals of diameter $\pi /6$.

\medskip  
The study of metric properties of the uniform distribution of $p \alpha$, 
for irrational $\alpha \in \mathbb C $, and Gaussian prime $p$,  is well developed 
by Harman \cite{MR2331072}*{Chapters 11}, \cite{MR4045111}, with effective results even 
small sectors.  
See the extensions to certain quadratic number fields by Baier and Technau \cite{MR4206429}. The latter paper also addresses metric questions along lines in the complex plane.

 On the analytical side, our improving estimate above is part of Discrete Harmonic Analysis, a subject invented by Bourgain \cite{MR1019960}. The recent textbook of one of us \cite{MR4512201} serves as a comprehensive summary of the subject. The study of the averages over the primes has 
 been extensively studied, beginning with the work of Bourgain-Wierdl \cites{MR995574,BP1}, and continued by several 
 \cites{MR4434278,MR3370012,MR3299842,MR3646766,MR4072599}.

\bigskip 
\section{Notation and Preliminaries}

Throughout the paper we are using  $\| \beta\|$ to denote the distance of $\beta\in \mathbb{C}$ to the closest point $n\in \Z[i]$, and for $a,b,c \in \Z[i]$ we write $(a,b)=c$ to indicate that $c$ is the greatest (in norm) common divisor of $a$ and $b$ up to a unitary element.

We will  use the $\ell^\infty$ balls 
 $B_\infty(r) = \{ x = a+bi \in \Z [i] \, : \, -r \leq a,b  < r \}$ and $B_\infty(c,r) = B_{\infty}(r) +c$. Notice that there is a small departure from the traditional notation of a ball with respect to the $\infty$-norm, in the sense that we are only including the lowest endpoint, however this variation is useful as it allows us to obtain a tessellation of the complex plane. It is obvious that translating and rotating the grid does not affect the tessellation. For a $q = |q| e^{i \theta_q} \in \Z [i]$, we are interested in the grid formed by the squares
$$ B_q : = e^{i \theta_q}  B_\infty \left((1+i)\frac{|q|}{2}, \frac{|q|}{2} \right) $$
where we have rotated the squares by the argument of $q$, so that their sides are parallel and orthogonal to $q$ respectively; see Figures \ref{square_Sq} and \ref{square_Sq_2}. This particular decomposition of lattice points coincides with all the \emph{unique} remainders modulo $q$ in $\Z[i]$ and simplifies our calculations of complex exponentials.
Indeed, it is known that for $q \in \Z[i] $, there are $N(q) $ distinct remainders  modulo $q$, which agrees with the number of points inside $B_q$. It is straightforward to verify that any two points in $B_q$ cannot be equivalent modulo $q$, which then proves our claim. We are also going to need the following, more geometric, description of our boxes: 
\begin{equation}\label{eq-geometricsquaredefinition}
    B_q = \{r \in \Z[i] \mid 0 \leq \langle r , q\rangle < N(q) \ \ \text{and} \ \ 0 \leq \langle r , iq\rangle < N(q) \}.
\end{equation}

\begin{figure}
    \begin{minipage}[b]{0.45\linewidth}
    \hspace*{2em} 
    \begin{tikzpicture}
        \draw[->, black] (-1,0) -- (3,0);
        \draw[->, black] (0,-0.5) -- (0,3);
        \draw[blue, very thick]  (0,0) -- (2,1) ;
        \draw[blue, very thick]  (0,0) -- (-1,2) ;
        \draw[dashed,blue, very thick]  (2,1) -- (1,3) -- (-1,2) ;
        \draw[black] (2,1) circle (2pt) node[anchor=west]{$q $};
        \draw[black] (-1,2) circle (2pt) node[anchor=east]{$iq $};
        \draw[black] (1,3) circle (2pt);
        \filldraw[black] (0,0) circle (2pt);
        \filldraw[black] (0,1) circle (2pt);
        \filldraw[black] (0,2) circle (2pt);
        \filldraw[black] (1,1) circle (2pt);
        \filldraw[black] (1,2) circle (2pt);
    \end{tikzpicture}
    \caption{\tiny \\ The lattice points in the box $B_q $, for $q = 2+i $. There are exactly $N(q) = 5$ points inside the box. \\ ~ }
    \label{square_Sq}
    \end{minipage} 
    \hfill 
    \begin{minipage}[b]{0.45\linewidth}
    \begin{tikzpicture}
        \draw[->, black] (-1,0) -- (3,0);
        \draw[->, black] (0,-0.5) -- (0,6);
        \draw[blue, very thick]  (0,0) -- (4,2) ;
        \draw[blue, very thick]  (0,0) -- (-2,4) ;
        \draw[dashed,blue, very thick]  (4,2) -- (2,6) -- (-2,4) ;
        \draw[dashed, gray, thick]  (2,1) -- (0,5);
        \draw[dashed, gray, thick]  (-1,2) -- (3,4);
        \draw[black] (4,2) circle (2pt) node[anchor=west]{$q $};
        \draw[black] (-2,4) circle (2pt) node[anchor=east]{$iq $};
        \draw[black] (2,6) circle (2pt);
        \draw[black] (0,5) circle (2pt);
        \draw[black] (3,4) circle (2pt);
        \filldraw[black] (-1,2) circle (2pt) node[anchor=east]{$\frac{iq}{2} $};
        \filldraw[black] (-1,3) circle (2pt);
        \filldraw[black] (-1,4) circle (2pt);
        \filldraw[black] (0,0) circle (2pt);
        \filldraw[black] (0,1) circle (2pt);
        \filldraw[black] (0,2) circle (2pt);
        \filldraw[black] (0,3) circle (2pt);
        \filldraw[black] (0,4) circle (2pt);
        \filldraw[black] (1,1) circle (2pt);
        \filldraw[black] (1,2) circle (2pt);
        \filldraw[black] (1,3) circle (2pt);
        \filldraw[black] (1,4) circle (2pt);
        \filldraw[black] (1,5) circle (2pt);
        \filldraw[black] (2,1) circle (2pt) node[anchor=north]{$\frac{q}{2} $}; 
        \filldraw[black] (2,2) circle (2pt);
        \filldraw[black] (2,3) circle (2pt);
        \filldraw[black] (2,4) circle (2pt);
        \filldraw[black] (2,5) circle (2pt);
        \filldraw[black] (3,2) circle (2pt);
        \filldraw[black] (3,3) circle (2pt);
    \end{tikzpicture}
    \caption{\tiny \\ The box $B_q $ for $ q = 4 + 2i$. Note that $ 4+2i = 2(2+i)$, hence $B_{4+2i}$ is partitioned by $2^2$ Gaussian translations of $ B_{2+i}$.}
    \label{square_Sq_2}
    \end{minipage} 
    \hspace*{2em} 
\end{figure}

\smallskip 

Let $e(x) := e^{2 \pi i x}$.
For a function $f:\Z[i] \to \mathbb{C}$, we can define the discrete Fourier Transform over the box $B_q$ as
\begin{align} \label{e:FDq}
    \mathcal F_{q} f \, (x) := \sum_{n\in B_q} f(n) e\bigl(-\langle x,\frac{n}{q}\rangle\bigr),
\end{align}
and the corresponding inverse discrete Fourier transform as
\begin{align*}
    \mathcal F_{q}^{-1} f \, (n) := \frac{1}{N(q)}\sum_{x\in B_q} f(x) e\bigl(\langle n,\frac{x}{\bar{q}}\rangle\bigr).
\end{align*}
\smallskip

The Euler totient function $\phi$  for Gaussian integers counts the number of points in $B_q$ that are coprime to $z$.  It is equal to 
\begin{align*}
    \phi(z) & = N(z) 
        \prod_{\substack{p \divides z \\ p : \text{ prime in }  \Z[i]}} \left( 1 - \frac{1}{N(p)} \right). 
\end{align*}
Clearly, $\phi (z) = \phi ( \bar z)$. 
Thus, $ \lvert \A_q \rvert = \phi (q)$, where 
\begin{equation}
    \A_q : =  \{ r \in B_q \, : \, (r,q)=1 \}. 
\end{equation}

We also need the arithmetic function $r_2(n)$, which is called \emph{Sum of Squares function}, which counts the number of representations of an integer $n$ as sum of two squares. Obviously $r_2(n)$ is the number of $m\in \Z[i]$ such that $N(m)=n$. It is known that  $r_2(n)=O(1)$ in an average sense, namely  
\begin{align}\label{sumofsqestimate}
\sum_{n<N}r_2(n)&\simeq N
\end{align}



Define the  M\"obius function as follows. 
\begin{align} \label{e:Mobius}
 \mu(n)=\begin{cases}
(-1)^k & \text{ if } n=\epsilon \rho_1\rho_2\cdots \rho_k\\
1 & \text{ if } n=\epsilon\\
0 & \text{otherwise.}
\end{cases}
\end{align}
where $\epsilon\in\{\pm 1,\pm i\}$ are units and $\rho_i$ are  distinct prime elements of $\Z[i]$.

\smallskip 
We define the classical form of the average over a sector.  
For an interval $\omega \subset \mathbb{T}$, set 
\begin{equation}  \label{e:MN}
M_N ^{\omega } = M_N = \frac{\lvert\omega\rvert}  { 2 \pi  N} \sum 
_{ \substack { N(n) < N \\ \arg(n)\in\omega }} \delta _n , 
\end{equation}
where $\delta _n$ is a Dirac measure at $n$.   

We emphasize that \emph{we do not attempt to track constants that depend upon $\omega$.} 
Frequently, we assume that $N$ is large enough, once $\omega $ is fixed.

\begin{lemma}\label{expsum}
Fix an interval $\omega \subset \mathbb{T}$, $0< |\omega| \leq  2 \pi $. 
For $N(\beta)<1$, and integers $N> N _{\omega }$, 
\begin{align}
    \widehat M_N ^{\omega }(\beta) & \coloneqq  
 \frac{\lvert\omega\rvert}  { 2 \pi  N} \sum _{ \substack { N(n) < N \\\arg(n)\in\omega  }}
    e(-\langle n,\beta\rangle) 
    \\ \label{e:expsum}
    & \ll_{\omega}  \min\left(1, (N \cdot N(\beta))^{-\frac{3}{4}}\right). 
    + \frac1{\sqrt N }.
\end{align}
The implied constant only depends on $\omega$. 
\end{lemma}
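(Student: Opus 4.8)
The plan is to reduce the estimate to a one-dimensional exponential sum (in the radial variable) with a twist from the angular constraint, and then apply van der Corput / stationary phase. First I would discard the case $N\cdot N(\beta)$ bounded, where the trivial bound (using $\sum_{N(n)<N}r_2(n)\simeq N$ from \eqref{sumofsqestimate}) already gives the claimed $\ll 1$. So assume $N(\beta) \gg 1/N$. Writing $n = a+bi$ with $a^2+b^2 < N$ and $\arg(n)\in\omega$, and $\beta = \beta_1 + i\beta_2$, the phase is $\langle n,\beta\rangle = a\beta_1 + b\beta_2$. I would then organize the lattice sum $\sum_{N(n)<N,\ \arg(n)\in\omega} e(-a\beta_1 - b\beta_2)$ by summing over one variable at a time: fix $a$ (there are $O(\sqrt N)$ relevant values), and sum $e(-b\beta_2)$ over the interval of $b$ for which $a+bi$ lies in the sector $\omega$ and inside the disc of radius $\sqrt N$. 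Each inner sum is a geometric sum, bounded by $\min(\sqrt N, \|\beta_2\|^{-1})$, but that only yields roughly $N^{1/2}\cdot\min(\sqrt N,\|\beta_2\|^{-1})$, which after normalizing by $1/N$ is too weak near the axes. So the genuinely useful route is to keep the two-dimensional oscillation.

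The cleaner approach is to pass to polar-type coordinates adapted to $\omega$. Rotate so that $\omega$ is (roughly) centered on the positive real axis; the constant is allowed to depend on $\omega$, so we may assume $\omega$ avoids a neighborhood of where the rotated phase degenerates. Then the region $\{N(n)<N,\ \arg(n)\in\omega\}$ is a sector of a disc, and I would compare the exponential sum to the integral $\iint_{\text{sector}} e(-\langle x,\beta\rangle)\,dx$ via a standard lattice-point-to-integral comparison (Poisson summation or Euler–Maclaurin in each slice), with the error controlled by the boundary, of size $O(\sqrt N)$ — again giving $O(\sqrt N)$ after normalization, i.e. the $1/\sqrt N$ term in \eqref{e:expsum}. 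The main term is then the oscillatory integral over the sector, and by the non-stationary phase / divergence-theorem argument (integrating by parts, the gradient of the phase $\langle x,\beta\rangle$ has size $\sqrt{N(\beta)}$, and the sector has diameter $\sqrt N$), one gets a bound $\ll (N\cdot N(\beta))^{-1}$ from the smooth part plus a boundary contribution from the circular arc $\{N(x)=N\}$ of the disc. The arc contribution is the term that produces the exponent $3/4$: on a circle of radius $\sqrt N$, the integral $\int e(-\langle x,\beta\rangle)\,d\sigma(x)$ is a Bessel-type integral of size $\ll (\sqrt N)^{1/2} N(\beta)^{-1/4} = N^{1/4}N(\beta)^{-1/4}$, which after division by $N$ gives $(N\cdot N(\beta))^{-1/4}\cdot N(\beta)^{-... }$ — more precisely $N^{-3/4}N(\beta)^{-1/4}$, and since we are in the range $N(\beta)\gtrsim 1/N$ this is dominated by $(N\cdot N(\beta))^{-3/4}$. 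The two straight edges of the sector contribute one-dimensional stationary-phase terms of size $O(N(\beta)^{-1/2})$, which are again absorbed (using $N(\beta)<1$) into $(N\cdot N(\beta))^{-3/4}$ or into $1/\sqrt N$.

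I would assemble these pieces as: $\widehat M_N^\omega(\beta) = \frac{|\omega|}{2\pi N}\big(\,\text{(oscillatory integral over sector)} + O(\sqrt N)\,\big)$, then bound the oscillatory integral by $O\big(N^{1/2}\cdot N^{1/4} N(\beta)^{-1/4}\big)$ coming from the circular boundary arc after one integration by parts, which is the dominant boundary term, giving the $(N\cdot N(\beta))^{-3/4}$ bound; the $\min(1,\cdot)$ form is recovered by also retaining the trivial bound. The main obstacle is the boundary analysis: the sector $\{N(n)<N,\arg(n)\in\omega\}$ is not smooth (it has two corners where the arc meets the edges), so one must be careful cutting off near the corners — I would use a smooth partition of unity separating the circular-arc part from the two radial edges, estimate each with the appropriate (two- vs one-dimensional) stationary phase, and check that the corner overlaps do not destroy the exponents. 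A secondary technical point is making the lattice-to-integral comparison uniform in $\beta$ with $N(\beta)<1$; here the curvature of the disc boundary and the fact that the edges are rational directions (parallel/orthogonal to rotations of $q$) help, and the resulting error is exactly the harmless $1/\sqrt N$ already present in the statement.
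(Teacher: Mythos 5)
Your overall route is the same as the paper's: replace the lattice sum by the continuous integral over the sector (the paper does this by smearing each point mass over its unit box, i.e.\ multiplying through by $\prod_{j}\tfrac{\sin(\beta_j/2)}{\beta_j/2}$, which is harmless because $N(\beta)<1$ keeps that factor bounded away from zero), accept an $O(\sqrt N)$ error from the $O(1)$-neighborhood of the boundary, and then estimate the oscillatory integral over the sector. The paper disposes of that last integral by partitioning $\omega$ and invoking Montgomery's disc estimate, whereas you propose a direct divergence-theorem/stationary-phase analysis of the boundary. Your arc analysis is essentially right, up to a bookkeeping slip: after the integration by parts the arc contributes $\ll N(\beta)^{-1/2}\cdot N^{1/4}N(\beta)^{-1/4}=N^{1/4}N(\beta)^{-3/4}$, which after dividing by $N$ is exactly $(N\,N(\beta))^{-3/4}$, not your $N^{-3/4}N(\beta)^{-1/4}$; either way that term is acceptable.

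The genuine gap is your treatment of the two radial edges, not the corners. The divergence theorem assigns to an edge with unit direction $e$ and normal $n_e$ a contribution of size $\frac{\lvert\langle n_e,\beta\rangle\rvert}{N(\beta)}\bigl\lvert\int_0^{\sqrt N}e(-t\langle e,\beta\rangle)\,dt\bigr\rvert$. When $\beta$ is (nearly) orthogonal to $e$, the inner integral is $\asymp\sqrt N$ and the prefactor is $\asymp N(\beta)^{-1/2}$, so after dividing by $N$ the edge contributes $\asymp(N\,N(\beta))^{-1/2}$ — not the $O(N(\beta)^{-1/2})$ you claim gets absorbed. Nor is this an artifact of the method: along such an edge the wedge of angular width $\sim (r\sqrt{N(\beta)})^{-1}$ has area $\asymp\sqrt N\,N(\beta)^{-1/2}$ and carries phase $O(1)$, hence contributes without cancellation. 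Concretely, for $N(\beta)\sim N^{-1/2}$ and $\beta$ orthogonal to an edge, the edge term is $\sim N^{-1/4}$, while your target $\min\bigl(1,(N\,N(\beta))^{-3/4}\bigr)+N^{-1/2}\sim N^{-3/8}$ is strictly smaller, so the absorption step fails and the argument as written does not close. Any complete proof along your lines must treat the flat edges separately (restricting the exponent in directions near-orthogonal to an edge, or exploiting extra input); this is precisely the delicate point that the paper's proof delegates to the partition-of-$\omega$ and Montgomery step, whose pages 111--112 handle the circular boundary of the full disc rather than straight edges.
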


\begin{proof}
Let $I_n = B_\infty (n, 1/2)$. For $n=0$, we have 
\begin{equation}
    \int_{I_0} e (-\langle x, \beta\rangle) \;dx = 
    \prod _{j=1}^2 \frac{\sin (\pi \beta_j)} {\pi \beta_j}, \; \; \; \beta = (\beta_1,\beta_2). 
\end{equation}
This function is bounded above, and away from zero, since 
$N(\beta) <1$.  Suppress the dependence on $\omega $ in the notation. 
 Modify the definition of 
 \[ S_N (\beta) := N \widehat M_N (\beta ) \]
 to 
\begin{align}
 T_N (\beta) &\coloneqq     \sum _{\substack{n \colon N(n)< N \\ \arg(n)\in\omega    }} 
    \int_{I_n} e ( -\langle x, \beta \rangle )\; dx 
   \\& = \prod _{j=1}^2 \frac{\sin (\beta_j/2)} {\beta_j/2} 
   \sum _{\substack{ n\colon N(n)< N \\ \arg(n)\in\omega }} e(-\langle n, \beta \rangle)
    \\
    &= S_N (\beta) 
    \prod _{j=1}^2 \frac{\sin (\beta_j/2)} {\beta_j/2}   . 
    \end{align} 
  We see that it suffices to estimate $T_N(\beta)$. 
    
The symmetric difference between the sector defined by $ \omega$, and the area of integration defining $T_N (\beta) $ is the set 
\begin{equation}
    \bigcup _{\substack{ n\colon N(n)< N \\ \arg(n)\in\omega }}  I_n \, 
    \triangle  \{ n\colon N(n)<N,\arg(n)\in\omega  \} .
\end{equation}
It has measure at most $\ll  \omega \sqrt N$, as the above set is supported in an $O(1)$ neighborhood of the boundary $\{ n : N(n) < N, \ n \in \omega\}$, which has length $\omega \sqrt{N}$. Thus, 
\begin{align} 
    T_N(\beta) &\ll 
    \int_{ \substack {N( x ) <N \\ \arg (x)\in\omega } }  
    e(-\langle x, \beta\rangle ) \; dx +  \sqrt{N}  
\end{align}
By partitioning $ \omega$ into smaller arcs and arguing as in \cite{MONTGOMERY}*{Page 111-112}, which addresses the case where $\omega = \mathbb{T}$, we may bound the integral
\[
\frac{1}{N} \int_{ \substack {N( x ) <N \\ \arg (x)\in\omega } }  
    e(-\langle x, \beta\rangle ) \; dx \ll_{\omega} \big( \frac{1}{N \cdot N(\beta)} \big)^{3/4}.
\]


\end{proof}
 
This is an elementary calculation. 
\begin{proposition}  Let $ \omega \subset \mathbb T$, and $ N(\beta ) \gg \lvert \omega \rvert  ^{-2}$. We have 
\begin{equation} \label{e:FourierDecay}
 \frac {2 \pi } {\lvert \omega\rvert}  \int _{ \substack{ N(x) \leq 1 \\  \arg (x)\in\omega }  } e(- \langle x, \beta  \rangle) 
\ll    \frac { \ln N( \beta )}{ N (\beta )^{1/2}} 
\end{equation}
\end{proposition}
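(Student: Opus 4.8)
The statement concerns a two-dimensional oscillatory integral over a sector of the unit disk, and the natural approach is to reduce it to a one-dimensional stationary-phase estimate by slicing the sector into thin strips orthogonal to $\beta$. Write $\beta = |\beta| e^{i\theta_\beta}$ and rotate coordinates so that $\beta$ points along the positive real axis; then $\langle x, \beta\rangle = |\beta| \cdot u$ where $u$ is the coordinate along $\beta$. For each fixed $u$, the cross-section of the region $\{N(x)\le 1\} \cap \{\arg(x)\in\omega\}$ is a union of at most one or two intervals in the transverse variable $v$, of total length $L(u)$. Integrating out $v$ gives
\[
\int_{\substack{N(x)\le 1 \\ \arg(x)\in\omega}} e(-\langle x,\beta\rangle)\,dx = \int_{-1}^{1} L(u)\, e(-|\beta| u)\, du.
\]
So everything is reduced to understanding the oscillatory integral of the profile function $L(u)$ against $e(-|\beta| u)$.

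**Key steps.** First, I would describe $L(u)$ explicitly: away from the two boundary rays of the sector, $L(u)$ is (a piece of) the chord-length function $2\sqrt{1-u^2}$ of the disk, which is $C^\infty$ on the open interval and has square-root singularities in its derivative only at the endpoints $u = \pm 1$; near the boundary rays, $L(u)$ has corner-type (Lipschitz, piecewise-smooth) behavior. Second, I would integrate by parts once in $u$: the boundary terms are $O(|\beta|^{-1}) = O(N(\beta)^{-1/2})$, and the remaining integral is $|\beta|^{-1}\int L'(u) e(-|\beta|u)\,du$. The derivative $L'(u)$ is integrable but blows up like $(1-u^2)^{-1/2}$ near $u=\pm1$; splitting the range into the region within distance $N(\beta)^{-1}$ of an endpoint (contributing $O(N(\beta)^{-1/2})$ trivially after the $|\beta|^{-1}$ prefactor, since $\int_0^{\delta} t^{-1/2}dt \asymp \delta^{1/2}$) and the complementary region, where one integrates by parts a second time, picks up a factor $\log N(\beta)$ from $\int_{N(\beta)^{-1}}^{1} t^{-3/2}\,dt \asymp N(\beta)^{1/2}$ times the $|\beta|^{-2}$ prefactor — this is exactly where the logarithm in \eqref{e:FourierDecay} comes from. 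Third, the condition $N(\beta)\gg|\omega|^{-2}$ guarantees that the frequency $|\beta|$ is large enough relative to the angular width that the corner contributions from the two boundary rays are dominated by the endpoint contributions, so they do not spoil the bound. Finally, one collects: total contribution $\ll N(\beta)^{-1/2}\log N(\beta)$, and multiplying through by $2\pi/|\omega|$ changes only the implied constant, which we are not tracking.

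**Main obstacle.** The delicate point is bookkeeping the interaction between the two types of singularities — the genuine $(1-u^2)^{-1/2}$ singularity of the disk's chord function at $u=\pm1$, and the corner singularities of $L(u)$ where a boundary ray of $\omega$ enters or exits. One must verify that the corners contribute only $O(N(\beta)^{-1/2})$ (no log), which requires the hypothesis $N(\beta)\gtrsim |\omega|^{-2}$; without it, a boundary ray could be nearly tangent to a level set of the phase, producing a worse bound (this is precisely the Landau-conjecture-type obstruction alluded to in the introduction). I would handle this by choosing the dissection of $\omega$ into sub-arcs short enough that on each the boundary is uniformly transverse to $\beta$ — mirroring the reduction in \cite{MONTGOMERY}*{Page 111-112} used in Lemma \ref{expsum} — so that on each sub-arc only the two disk-boundary endpoints produce the $\log$, and summing the $O(1)$-many sub-arcs keeps the constant absorbed into the $\omega$-dependence. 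Everything else is routine one-dimensional van der Corput / integration-by-parts estimation.
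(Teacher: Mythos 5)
Your slicing strategy is a genuinely different route from the paper's. The paper works in polar coordinates: for each fixed $\theta\in\omega$ the radial integral $\int_0^1 e(\langle\beta,te(\theta)\rangle)\,t\,dt$ is bounded by $\min\{1,\lvert\langle\beta,e(\theta)\rangle\rvert^{-1}\}$, and the logarithm appears when this is integrated in $\theta$ over the angles nearly orthogonal to $\beta$. Your reduction to $\int_{-1}^{1}L(u)e(-\lvert\beta\rvert u)\,du$ with $L$ the chord--profile of the sector is legitimate, and in fact it can be completed more simply than you propose: $L$ is bounded by $2$ and piecewise monotone with $O(1)$ pieces (its breakpoints are the $u$--coordinates of the vertex, the two corners, and the points of the arc with vertical tangent), so it has total variation $O(1)$ even allowing jumps, and a single integration by parts — equivalently the standard Fourier bound for functions of bounded variation — already gives $\lvert\int\rvert\ll\lvert\beta\rvert^{-1}=N(\beta)^{-1/2}$, with no logarithm. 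Since the paper allows the implied constant to depend on $\omega$ (its own proof retains a factor $\lvert\omega\rvert^{-1}$), this is stronger than \eqref{e:FourierDecay}, so your overall plan does prove the Proposition; what you gain over the paper's argument is the removal of the log at fixed $\omega$, while the paper's polar argument is shorter and is the one whose structure is reused in Lemma \ref{expsum}.

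Two specific points in your write-up are wrong, however. First, the bookkeeping of the logarithm: $\int_{N(\beta)^{-1}}^{1}t^{-3/2}\,dt\asymp N(\beta)^{1/2}$, and multiplied by the prefactor $\lvert\beta\rvert^{-2}=N(\beta)^{-1}$ this is $N(\beta)^{-1/2}$ — no logarithm is produced (a log would come from $\int t^{-1}\,dt$). This is harmless, since the log-free bound implies the claim, but the log in \eqref{e:FourierDecay} does not come from where you say; in the paper it comes from the angular integral. Second, and more seriously, your treatment of the straight boundary rays fails as described. Whether a ray is transverse to the level sets of the phase depends on $\arg\beta$, not on the size of $N(\beta)$ relative to $\lvert\omega\rvert^{-2}$, so the hypothesis $N(\beta)\gg\lvert\omega\rvert^{-2}$ cannot ensure transversality; and dissecting $\omega$ into sub-arcs does not change the directions of the two extreme rays (it only creates new radial edges with the same defect). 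When a ray makes angle $\psi$ with the direction orthogonal to $\beta$, the profile $L$ changes by $\approx 1$ over a $u$--interval of length $\approx\lvert\sin\psi\rvert$, so near that corner $L$ is not Lipschitz uniformly in the configuration (it degenerates to a jump), and your second integration by parts there yields a term of size $\approx N(\beta)^{-1}\lvert\sin\psi\rvert^{-1}$, which exceeds $N(\beta)^{-1/2}$ as soon as $\lvert\sin\psi\rvert<N(\beta)^{-1/2}$ — a configuration permitted by the hypotheses. The repair is exactly the bounded-variation estimate above, which is insensitive to such jumps and makes the second integration by parts, and the transversality discussion, unnecessary.
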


\begin{proof} Write the integral as 
 \begin{align}
 \frac {2 \pi } {\lvert \omega\rvert}  \int_0^ \omega \int_0 ^1 e ( \langle \beta , t e (  \theta ) \rangle )\; t dt\, d \theta  
 &\ll 
 \frac 1  {\lvert \omega\rvert}  \int_0 ^{\omega  } \min \{1,  \lvert \langle \beta , e (\theta ) \rangle \rvert ^{-1} \} d \theta  
\\ & \ll \frac 1 {\lvert \omega\rvert}  \frac { \ln N( \beta )}{ N (\beta )^{1/2}} .
 \end{align}
\end{proof}

 \begin{figure}
 \begin{tikzpicture}[rotate=15] 
  \draw
    (5.5,0) coordinate (a)    
    -- (0,0) coordinate (b) node[left] {0}
    -- (5,2) coordinate (c) 
    pic[angle eccentricity=1.2, angle radius=1cm]
    {angle=a--b--c};
    \filldraw (5,.5) circle (.05cm) node[right] {$x$}; 
    \draw (0,0) -- (1.5,.6) -- (5,.5) 
    -- (3,0) -- (0,0) ; 
\end{tikzpicture}
\caption{A point $x$ is in the sector $S _{\omega }$. The parallelogram $T^\omega (x)$ is pictured inside the sector.} 
\label{f:convolve}
 \end{figure}

This is needed for the Goldbach Theorems. 

\begin{lemma}\label{l:convolve}  There is a constant $C>1$ so that this holds. 
Fix interval $\omega \subset \mathbb{T} $ of length less than $\pi /4$.   For $N > N( \omega )$,  
we have this estimate.  and $\textup{arg} \in \omega $, 
with $ \delta (x) \coloneqq \textup{dist}(\textup{arg}(x), \mathbb{T} \setminus \omega ) > C/\sqrt N $, we have 
\begin{align}
M_N ^{\omega } \ast M_N ^{\omega } (x) &\gg   \frac{ \delta (x)} N , 
\\ 
M_N ^{\omega } \ast M_N ^{\omega } \ast M_N ^{\omega }  (x) &\gg   \frac{ \delta (x) ^2 } N , 
\end{align}
\end{lemma}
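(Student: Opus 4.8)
The plan is to reduce both estimates to a volume (counting) computation for the supports of the measures $M_N^\omega$, since $M_N^\omega$ assigns mass roughly $\tfrac{|\omega|}{2\pi N}$ to each lattice point in the sector $S_\omega \cap \{N(n) < N\}$ (up to the von~Mangoldt weight, which here is absent—$M_N^\omega$ is the \emph{measure} of \eqref{e:MN}, so every point carries equal mass). Concretely, $M_N^\omega \ast M_N^\omega(x)$ is $\big(\tfrac{|\omega|}{2\pi N}\big)^2$ times the number of ways to write $x = n_1 + n_2$ with $N(n_j) < N$ and $\arg(n_j) \in \omega$. So the whole lemma is the assertion that this representation count is $\gg N \delta(x)$ for the binary case and $\gg N \delta(x)^2$ for the ternary case, where $\delta(x) = \operatorname{dist}(\arg(x), \mathbb{T}\setminus\omega)$.

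First I would set up the geometry. Fix $x$ in the sector with $\delta(x) > C/\sqrt N$; since $x$ is then also forced to have $N(x) \lesssim N$ (otherwise no representation with both summands of norm $< N$ is possible, so the claim is about such $x$ — or more precisely one restricts attention to $N(x) \le \tfrac14 N$, say, absorbing the other ranges separately), consider the parallelogram $T^\omega(x)$ pictured in Figure~\ref{f:convolve}: the set of $y$ such that both $y$ and $x - y$ lie in the cone $S_\omega$. Because $\omega$ has length $< \pi/4$ and $\arg(x)$ is at distance $\delta(x)$ from the boundary of $\omega$, elementary planar trigonometry shows $T^\omega(x)$ contains a parallelogram of "width" $\gg \delta(x)\,|x|$ in the angular direction and length $\gg |x|$ in the radial direction, hence of area $\gg \delta(x)\,|x|^2$; moreover on a definite sub-parallelogram one also has $N(y), N(x-y) < N$ once $N(x)$ is in the right range (one may need to first handle $|x|$ comparable to $\sqrt N$ versus $|x|$ small, but the small case only makes the region larger). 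Counting lattice points in $T^\omega(x)$: its area is $\gg \delta(x) |x|^2$ and, crucially, $\delta(x) > C/\sqrt N$ guarantees the width $\gg \delta(x)|x| \gtrsim 1$ exceeds a constant (at least when $|x| \gtrsim \sqrt N$; the genuinely small-$|x|$ regime needs the constant $C$ chosen so the width stays $\gtrsim 1$, which is exactly what $\delta(x) > C/\sqrt N$ buys when combined with the lower bound on $|x|$ forced by wanting $\gg N$ many representations at all). So the lattice-point count is $\gg \delta(x)|x|^2$, and multiplying by $\big(\tfrac{|\omega|}{2\pi N}\big)^2$ and using $|x|^2 \asymp N$ (in the extremal range) gives the bound $\gg \delta(x)/N$. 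For the ternary estimate I would iterate: $M_N^\omega \ast M_N^\omega \ast M_N^\omega(x) = \sum_{y} M_N^\omega(y)\, (M_N^\omega \ast M_N^\omega)(x-y)$, restrict $y$ to a suitable sub-sector so that $x - y$ still satisfies the hypotheses of the binary case with $\delta(x-y) \gg \delta(x)$, apply the binary bound, and sum $\gg \delta(x) N$ many terms each of size $\gg \tfrac{|\omega|}{2\pi N}\cdot \tfrac{\delta(x)}{N}$, yielding $\gg \delta(x)^2/N$.

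The main obstacle is making the geometric lower bound on $|T^\omega(x) \cap \{N(y), N(x-y) < N\}|$ uniform and handling the boundary/scale issues cleanly: one must verify that the parallelogram genuinely fits inside the intersection of the two norm-balls (this is where the condition $|\omega| < \pi/4$ and the distance-from-boundary hypothesis interact), and one must confirm the lattice-point count matches the area — i.e. that the region is not too thin in either direction — which is precisely guaranteed by $\delta(x) \gg 1/\sqrt N$ together with a lower bound on $|x|$. A secondary technical point is that $M_N^\omega$ is supported on lattice points in the sector but the sector's circular boundary is curved, so $T^\omega(x)$ is only approximately a parallelogram near $\partial B(0,\sqrt N)$; this is handled exactly as the $O(\sqrt N)$ boundary error in the proof of Lemma~\ref{expsum}, and does not affect the main-term lower bound since we only need a one-sided (lower) estimate on a region of area $\gg N \delta(x)$. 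Once these geometric facts are in place, both displayed bounds follow by direct counting with no further arithmetic input.
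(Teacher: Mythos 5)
Your central computation is the same as the paper's: identify $M_N^\omega\ast M_N^\omega(x)$ with $(\lvert\omega\rvert/2\pi N)^2$ times the number of lattice points in the parallelogram $T^\omega(x)$ of points $y$ with $y$ and $x-y$ both in the sector, bound that count from below by its area $\asymp\delta(x)\lvert x\rvert^2$ using that $\delta(x)>C/\sqrt N$ forces the short side of the parallelogram to have length $\gg 1$, and get the ternary bound by one more convolution, using that for a fixed positive proportion of $y\in T^\omega(x)$ the point $x-y$ again satisfies the binary hypotheses with $\delta(x-y)\gg\delta(x)$ and $N(x-y)\asymp N$. The paper argues in exactly this way (after a continuous warm-up via dilation invariance), so the route is not genuinely different.

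The genuine problem is your handling of the size of $N(x)$. You propose to ``restrict attention to $N(x)\le\frac14 N$, absorbing the other ranges separately,'' and you assert that ``the small case only makes the region larger.'' Both claims are backwards. Since $\lvert\omega\rvert<\pi/4$, any representation $x=n_1+n_2$ with $\arg(n_1),\arg(n_2)\in\omega$ forces $\lvert n_1\rvert,\lvert n_2\rvert\ll\lvert x\rvert$, and $T^\omega(x)$ has area $\asymp\delta(x)\lvert x\rvert^2$, which \emph{shrinks} with $\lvert x\rvert$; consequently $M_N^\omega\ast M_N^\omega(x)\ll\delta(x)\lvert x\rvert^2/N^2$, and the asserted bound $\gg\delta(x)/N$ is false whenever $\lvert x\rvert=o(\sqrt N)$. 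What is required is a \emph{lower} bound $N(x)\gtrsim N$, not an upper bound: the paper's proof assumes $N(x)\ge N/400$ in the binary case and $N(x)>\tfrac34 N$ in the ternary case, and that is the only range in which the lemma is invoked in \S\ref{s:Goldbach}. Your final step does use $\lvert x\rvert^2\asymp N$, so the computation you actually carry out is correct in that range, but the surrounding claim that the remaining ranges can be absorbed must be dropped: no argument can recover the bound there, and none is needed. One simplification you may as well use, which removes your worry about the curved boundary: when $N(x)<N$, writing $x=ae^{i\theta_1}+be^{i\theta_2}$ with $a,b>0$ along the two boundary rays of $\omega$, the fact that $\lvert\omega\rvert<\pi/2$ gives $a,b<\lvert x\rvert$, so every point of the parallelogram $T^\omega(x)$ has modulus at most $\lvert x\rvert<\sqrt N$; hence the constraints $N(y),N(x-y)<N$ are automatic and no boundary correction of the type appearing in Lemma \ref{expsum} is needed for this lower bound.
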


\begin{proof} Consider the continuous case,  
as dilation invariance lets us reduce to the case of $N=1$. 
Set 
\begin{equation}
 S^\omega \coloneqq \{ x\in \mathbb{R} ^2  \colon  N(x) \leq 1,\  \textup{arg}(x) \in \omega  \}. 
\end{equation}
Let  $x \in S ^{\omega }$ be 
of norm 1.  Consider the set 
\begin{equation}
T^\omega (x) = \{ y \in S ^ \omega \colon  \mathbf{1} _{S^\omega } (x-y)=1\}. 
\end{equation}
This set is a parallelogram, with one side length at least $1/2$, and  width comparable to $\delta (x)$. 
See Figure \ref{f:convolve}. 
In particular $\lvert T ^\omega (x) \rvert \gg \delta (x)$. 
But then, 
\begin{equation}
\mathbf{1} _{S ^{\omega }} \ast \mathbf{1} _{S ^{\omega }} (x) = \lvert T ^\omega (x) \rvert  \gg \delta (x). 
\end{equation}
 By dilation invariance, this means 
\begin{equation}
\mathbf{1} _{S ^{\omega }} \ast \mathbf{1} _{S ^{\omega }} (x) \gg N(x) \delta (x), \qquad x\in S ^{\omega }. 
\end{equation}
Note that the third convolution is then 
\begin{equation}
\mathbf{1} _{S ^{\omega }} \ast\mathbf{1} _{S ^{\omega }} \ast \mathbf{1} _{S ^{\omega }} (x) 
= 
\int \lvert T ^\omega (x-y)  \rvert \mathbf{1} _{S ^{\omega }} \ast \mathbf{1} _{S ^{\omega }} (y)\; dy 
\gg   \delta (x) ^2 , \qquad N(x)=1,
\end{equation} 
since for typical values of $x$ and $y$, both expressions above are $\gg \delta (x)$.   Note that one has, by dilation invariance, 
\begin{equation}
\mathbf{1} _{S ^{\omega }} \ast\mathbf{1} _{S ^{\omega }} \ast \mathbf{1} _{S ^{\omega }} (x) 
\gg N(x) ^2 \delta(x) ^2, \qquad x\in S^\omega.     
\end{equation}

\medskip
Turn to the discrete setting.  We no longer have dilation invariance. The situation is this. 
Let $N$ be a large integer, $\textup{arg}(x) \in \omega $, $ N(x) \geq \tfrac 1{400} N$, and $\delta (x) \sqrt N > C_0$. 
Here, $C_0$ is a large enough constant. 
Consider the parallelogram $T_x \coloneqq \sqrt N \cdot T ^{\omega }(x)$.   
The key point is that   the number of lattice points in $T(\omega,x)$ is comparable to its area.  
That is,  
\begin{equation} \label{e:interior}
\sum _{n \in T(\omega,x) \cap \mathbb{Z} [i]} 1 \gg \lvert T (\omega,x) \rvert 
\gg \delta(x) N. 
\end{equation}
Indeed, $T(\omega,x)$ is a parallelogram, with side length $\gg \sqrt N$ and width comparable to  $ \gg \textup{arg}(x) \sqrt N  \gg  C_0$.  
It is then elementary to see that \eqref{e:interior} holds.  
This is the discrete variant of the key estimate in the continuous case. We are free to repeat the remainder of the continuous argument to conclude the Lemma. 

The details are as follows.  
In the bilinear case, we have, for $N(x) \geq \tfrac1{400} N$,  and $\textup{arg}(x)\in \omega$, we have 
\begin{align}
    M^\omega_N \ast M^\omega_N   (x) 
    &\geq 
    N^{-2} 
    \sum _{n \in T(\omega,x) \cap \mathbb{Z} [i]} 1 
    \\ &\gg \lvert T (\omega,x) \rvert   \gg \delta(x) N ^{-1}, 
\end{align}
where we use \eqref{e:interior}.   This estimate is stronger than claimed above, as it helps with the ternary estimate. 
Then, requiring that $ N(x) > \tfrac{3}{4} N$, 
note that 
\begin{align}
    M^\omega_N \ast M^\omega_N \ast M^\omega_N   (x) 
    \geq 
    N^{-1}  
    \sum _{n \in T(\omega,x) }  M^\omega_N \ast M^\omega_N   (x-n) \gg \delta (x)  ^{2} N ^{-1}. 
\end{align}
This is because for typical  values of $n\in T(\omega,x)$, 
we have $ M^\omega_N \ast M^\omega_N   (x-n) \gg \delta (x) N ^{-1}$.
\end{proof}

\section{Inequalities involving Ramanujan Sums}

In this section, we prove two dimensional analogues of estimates and identities already known for one dimensional Ramanujan sums, like the Cohen Identity. This section is crucial to our High-Low decomposition.
Some of these properties are known, but we include details for completeness. We start with standard facts about Fourier transform on $B_q$. 

\begin{lemma} \label{orthoidentity-lm}
Consider the set $B_q $ and $n \in \Z[i] $. We have:
\begin{align}
\label{e:orthognality}
    \sum_{r\in {B_q}} e( \langle r,\frac{n}{\bar q}\rangle) = \begin{cases}
    N(q) & \textup{ if } \bar q \divides n \\
    0 & \textup{ Otherwise.}
    \end{cases}
\end{align}
\end{lemma}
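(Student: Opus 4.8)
The plan is to recognize $r \mapsto e(\langle r, n/\bar q\rangle)$ as an additive character on the finite abelian group $\Z[i]/(q)$ and run the standard orthogonality argument. Writing the pairing out, $\langle r, n/\bar q\rangle = \operatorname{Re}\bigl(r\,\overline{(n/\bar q)}\bigr) = \operatorname{Re}(r\bar n/q)$. The first point to check is well-definedness modulo $q$: if $r - r' = qs$ with $s \in \Z[i]$, then $\operatorname{Re}\bigl((r-r')\bar n/q\bigr) = \operatorname{Re}(s\bar n) \in \Z$, so $e(\langle r, n/\bar q\rangle)$ depends only on the residue class of $r$ mod $q$. Since the preliminaries establish that $B_q$ is a complete set of residues modulo $q$ of cardinality $N(q)$, the quantity $S \coloneqq \sum_{r\in B_q} e(\langle r, n/\bar q\rangle)$ is unchanged if $B_q$ is replaced by any other residue system.

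Next I would dispatch the two cases. If $\bar q \divides n$, write $n = \bar q k$ with $k \in \Z[i]$; then $\langle r, n/\bar q\rangle = \langle r, k\rangle = \operatorname{Re}(r\bar k) \in \Z$, every summand equals $1$, and $S = |B_q| = N(q)$. If $\bar q \nmid n$, I claim the character is nontrivial: were $\operatorname{Re}(r\bar n/q) \in \Z$ for every $r \in \Z[i]$, then taking $r = 1$ and $r = i$ would force both the real and imaginary parts of $\bar n/q$ to be integers, i.e. $q \divides \bar n$, equivalently $\bar q \divides n$, a contradiction. So pick $r_0$ with $\zeta \coloneqq e(\langle r_0, n/\bar q\rangle) \neq 1$. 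The map sending $r$ to the representative in $B_q$ of $r + r_0$ is a bijection of $B_q$, and by the mod-$q$ invariance from the first paragraph the summand is unchanged under this reduction; hence $\zeta S = \sum_{r\in B_q} e(\langle r + r_0, n/\bar q\rangle) = S$, and since $\zeta \neq 1$ we get $S = 0$.

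The only genuinely delicate bookkeeping is that last reindexing — that "translate by $r_0$, then reduce modulo $q$ back into $B_q$" is a bijection of $B_q$ — which is immediate once $B_q$ is granted to be a fundamental domain for $\Z[i]/(q)$, as asserted in the preliminaries; everything else is a short direct computation, so I expect no real obstacle. If one prefers a fully explicit version, the geometric description \eqref{eq-geometricsquaredefinition} identifies $r \mapsto (\langle r, q\rangle, \langle r, iq\rangle)$ with $r \mapsto r\bar q$, so that $B_q$ corresponds to the points of the lattice $\bar q\,\Z[i]$ lying in $[0, N(q))^2$ and the character becomes $(a,b) \mapsto e\bigl((an_1 + bn_2)/N(q)\bigr)$ with $n = n_1 + i n_2$; but the abstract character argument above is cleaner and is what I would write up.
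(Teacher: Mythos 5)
Your proof is correct and complete. The paper in fact states this lemma with no proof at all (it is offered among the ``standard facts about the Fourier transform on $B_q$''), and your argument --- well-definedness of $r \mapsto e(\langle r, n/\bar q\rangle)$ modulo $q$, the preliminaries' assertion that $B_q$ is a complete residue system of cardinality $N(q)$, nontriviality of the character precisely when $\bar q \nmid n$, and the translate-by-$r_0$ reindexing --- is exactly the standard character-orthogonality argument the authors are implicitly invoking, so there is nothing to correct.
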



Below, we divide by  $\bar d$, where $d$ is a divisor of $q$. 
\begin{corollary} \label{cor:Orthogonality}
 For $d \divides q $ we have
\begin{align}
\label{e:orthognality2}
    \sum_{r\in B_q} e\left( \langle r,\frac{n}{\bar d}\rangle\right) = \begin{cases}
    N(q) & \textup{ if } \bar d \divides n \\
    0 & \textup{ Otherwise.}
    \end{cases}
\end{align}
\end{corollary}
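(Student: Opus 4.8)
The plan is to deduce Corollary \ref{cor:Orthogonality} from Lemma \ref{orthoidentity-lm} by exploiting the fact that $d \divides q$ means $B_q$ decomposes into translates of a grid attached to $d$. Concretely, write $q = d m$ for some $m \in \Z[i]$, so that $N(q) = N(d) N(m)$. The first step is to set up the right tessellation: since $\bar d \divides \bar q$, the box $B_q$ can be partitioned into $N(m)$ disjoint translates of (a copy of) the box $B_{d}$, exactly as illustrated in Figure \ref{square_Sq_2} for the case $q = 4+2i = 2(2+i)$. I would make this precise using the geometric description \eqref{eq-geometricsquaredefinition}: the map $r \mapsto (\langle r, q\rangle, \langle r, iq\rangle)$ identifies $B_q$ with the residues, and the sublattice condition lets us coset-decompose $B_q = \bigsqcup_{j} (c_j + B_d')$ where $B_d'$ is a rotated/scaled copy of $B_d$ carrying the same exponential behaviour relative to $\bar d$.

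The second step is the character computation. On each translate $c_j + B_d'$, the sum $\sum_{r \in c_j + B_d'} e(\langle r, n/\bar d\rangle)$ factors as $e(\langle c_j, n/\bar d\rangle)$ times $\sum_{s \in B_d'} e(\langle s, n/\bar d\rangle)$. By Lemma \ref{orthoidentity-lm} applied with $q$ replaced by $d$, the inner sum equals $N(d)$ if $\bar d \divides n$ and $0$ otherwise. In the vanishing case every piece is zero and we are done. In the non-vanishing case, $\bar d \divides n$ forces $e(\langle c_j, n/\bar d\rangle) = 1$ for every coset representative $c_j$ (because $n/\bar d \in \Z[i]$ and $\langle c_j, \text{Gaussian integer}\rangle \in \Z$, making the exponential trivial), so the total is $N(m) \cdot N(d) = N(q)$. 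This matches the claimed formula.

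The main obstacle I anticipate is purely bookkeeping: verifying cleanly that $B_q$ really is a disjoint union of $N(m)$ translates each behaving like $B_d$ with respect to the pairing against $n/\bar d$, since the boxes $B_q$ are rotated by $\arg(q)$ rather than $\arg(d)$, so the "copy of $B_d$" appearing inside $B_q$ is rotated differently than the standard $B_d$. One clean way around this is to avoid geometry altogether: note that $\{e(\langle r, n/\bar d\rangle) : r \in B_q\}$ only depends on $r \bmod \bar d$ — wait, more carefully, on $r$ through $\langle r, n/\bar d\rangle \bmod 1$, which depends on $r \bmod d$ since $n/\bar d$ is a fixed element once we are in the divisibility case, and in the non-divisibility case one argues via the general identity. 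So the alternative is: partition $B_q$ into residue classes mod $d$; each class is hit equally often, namely $N(q)/N(d) = N(m)$ times (this equidistribution is the one genuine fact to check, and it follows because $B_q$ is a complete residue system mod $q$ hence projects uniformly onto residues mod $d$); then $\sum_{r \in B_q} e(\langle r, n/\bar d\rangle) = N(m) \sum_{s \bmod d} e(\langle s, n/\bar d\rangle)$, and the inner sum is a complete-residue-system sum which is $N(d)$ or $0$ by Lemma \ref{orthoidentity-lm} (whose statement is really a statement about complete residue systems, independent of which representatives one picks). Multiplying gives $N(q)$ or $0$. I would present this residue-class version as the main line of argument, relegating the picture to motivation.
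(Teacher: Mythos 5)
Your argument is correct. The paper states this corollary without proof, as an immediate consequence of Lemma \ref{orthoidentity-lm}, so there is no written argument to compare against; your residue-class version supplies a complete justification. The two facts you isolate are both right: the summand $e(\langle r, n/\bar d\rangle)$ depends only on $r \bmod d$, since for $k\in\Z[i]$ one has $\langle dk, n/\bar d\rangle = \mathrm{Re}\bigl(dk\,\bar n/d\bigr) = \mathrm{Re}(k\bar n)\in\Z$; and since $B_q$ is a complete residue system modulo $q$ and $d \divides q$, reduction modulo $d$ hits every class exactly $N(q)/N(d)$ times. Combined with Lemma \ref{orthoidentity-lm} applied with $d$ in place of $q$ (which, as you note, is really a statement about complete residue systems once the summand is known to be a class function), this gives $N(q/d)\cdot N(d) = N(q)$ in the divisible case and $0$ otherwise. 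You were also right to discard the geometric tessellation of your first two paragraphs in favour of the residue-class argument, since it sidesteps the rotation bookkeeping entirely. For what it is worth, there is an even shorter deduction requiring no coset counting: writing $q = dm$, one has $n/\bar d = n\bar m/\bar q$, so the sum equals $\sum_{r\in B_q} e\bigl(\langle r, n\bar m/\bar q\rangle\bigr)$, which by Lemma \ref{orthoidentity-lm} is $N(q)$ exactly when $\bar q \divides n\bar m$, i.e.\ when $\bar d \divides n$, and $0$ otherwise.
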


Another consequence of Lemma \ref{orthoidentity-lm} is a form of Parseval's Identity as stated below.

\begin{proposition}
For a function $f$ defined on $\Z[i]$ the Discrete Fourier Transform $\mathcal{F}_{q}$ defined in \eqref{e:FDq} satisfies
    \begin{align*}
        \sum_{n\in B_q} |f(n)|^2 = \frac{1}{N(q)} \sum_{x\in B_{\bar q}} |\mathcal{F}_{q} f (x)|^2.
    \end{align*}
\end{proposition}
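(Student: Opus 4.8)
The plan is to derive Parseval's identity on the box $B_q$ directly from the orthogonality relation in Lemma~\ref{orthoidentity-lm}, exactly as one proves the classical finite Fourier inversion formula. First I would expand the right-hand side by substituting the definition \eqref{e:FDq} of $\mathcal{F}_q f$:
\begin{align*}
\sum_{x\in B_{\bar q}} |\mathcal{F}_q f(x)|^2
= \sum_{x\in B_{\bar q}} \Bigl( \sum_{n\in B_q} f(n) e\bigl(-\langle x, \tfrac{n}{q}\rangle\bigr)\Bigr)
\overline{\Bigl( \sum_{m\in B_q} f(m) e\bigl(-\langle x, \tfrac{m}{q}\rangle\bigr)\Bigr)}.
\end{align*}
Then I would interchange the (finite) sums to bring the sum over $x\in B_{\bar q}$ inside, obtaining
\begin{align*}
\sum_{n,m\in B_q} f(n)\overline{f(m)} \sum_{x\in B_{\bar q}} e\bigl(-\langle x, \tfrac{n-m}{q}\rangle\bigr)
= \sum_{n,m\in B_q} f(n)\overline{f(m)} \sum_{x\in B_{\bar q}} e\bigl(\langle x, \tfrac{m-n}{q}\rangle\bigr).
\end{align*}

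The next step is to evaluate the inner sum over $B_{\bar q}$. Here I want to apply Lemma~\ref{orthoidentity-lm} with the roles adjusted: that lemma states $\sum_{r\in B_q} e(\langle r, \tfrac{k}{\bar q}\rangle) = N(q)$ if $\bar q \divides k$ and $0$ otherwise. Replacing $q$ by $\bar q$ throughout (and using $N(\bar q) = N(q)$, $\bar{\bar q} = q$), this gives $\sum_{x\in B_{\bar q}} e(\langle x, \tfrac{k}{q}\rangle) = N(q)$ if $q \divides k$ and $0$ otherwise. Applying this with $k = m-n$, the inner sum is $N(q)$ when $q \divides (m-n)$ and $0$ otherwise. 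Since $n,m \in B_q$ and $B_q$ is a complete set of distinct residues modulo $q$ (as established in the Notation section via \eqref{eq-geometricsquaredefinition}), the condition $q\divides(m-n)$ forces $m=n$. Hence the double sum collapses to $N(q)\sum_{n\in B_q}|f(n)|^2$, and dividing by $N(q)$ yields the claim.

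The one genuine point requiring care — the main (mild) obstacle — is the bookkeeping around complex conjugation and the swap between $B_q$ and $B_{\bar q}$: one must check that the exponential $e(-\langle x, n/q\rangle)$ has conjugate $e(\langle x, n/q\rangle) = e(\langle x, \bar n/\bar q\rangle)$ is handled consistently with the indexing set $B_{\bar q}$ appearing on the spectral side, and that Lemma~\ref{orthoidentity-lm} is being invoked in the correct orientation. Since $\langle x, n/q \rangle$ is real (it is the pairing appearing throughout the paper), $\overline{e(-\langle x,n/q\rangle)} = e(\langle x, n/q\rangle)$, so the rewriting above is valid. Everything else is a routine interchange of finite sums, so no convergence or analytic issues arise.
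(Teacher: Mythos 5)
Your proof is correct and is exactly the argument the paper intends: the proposition is presented as a consequence of Lemma \ref{orthoidentity-lm}, and your expansion of $|\mathcal F_q f(x)|^2$, interchange of the finite sums, and application of the orthogonality relation with $q$ replaced by $\bar q$ (so that for $m,n\in B_q$, a complete residue system, $q\mid m-n$ forces $m=n$) is the standard route. One small caveat: the side identity $e(\langle x,n/q\rangle)=e(\langle x,\bar n/\bar q\rangle)$ in your final paragraph is not generally valid --- with $\langle a,b\rangle=\operatorname{Re}(a\bar b)$ one has instead $\langle x,n/q\rangle=\langle n,x/\bar q\rangle$, which is what makes the inverse transform over $B_{\bar q}$ consistent --- but your argument never uses it; realness and real-bilinearity of the pairing, which you do invoke, are all that is needed.
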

Above, on the left we have $B _{q}$, and on the right $B_{\bar q} = \{\bar{n}:n\in B_q \} =\overline{B_{q}}$.  


The analog of Ramanujan's sum is
\begin{align}\label{def-generalizedareagausssum1} 
    \tau_{q}(x) 
        & := \sum_{n\in \mathbb A_q} e(\langle x, \tfrac{n}{q}\rangle).
\end{align}

\textbf{}\begin{lemma}\label{indicatoridentity-lm}
 Let $r\in \Z[i]$. 
\begin{align*}
        \mathbf{1}_{\gcd(\bar r, q)=1} =  \frac{1}{N(q)} \sum_{k\in B_q} e\left(\langle -r,\frac{k}{q}\rangle\right) \tau_{{\bar q}}( k).
\end{align*}
\end{lemma}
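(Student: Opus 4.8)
The plan is to expand the right-hand side using the definition of $\tau_{\bar q}$ and then apply the orthogonality relation of Lemma~\ref{orthoidentity-lm} to collapse the double sum. First I would write
\[
\frac{1}{N(q)} \sum_{k\in B_q} e\Bigl(\langle -r,\tfrac{k}{q}\rangle\Bigr) \tau_{\bar q}(k)
= \frac{1}{N(q)} \sum_{k\in B_q} \sum_{n\in \mathbb A_{\bar q}} e\Bigl(\langle -r,\tfrac{k}{q}\rangle\Bigr) e\Bigl(\langle k,\tfrac{n}{\bar q}\rangle\Bigr),
\]
where $\mathbb A_{\bar q} = \{ n \in B_{\bar q} : (n,\bar q) = 1\}$ and $\lvert \mathbb A_{\bar q}\rvert = \phi(\bar q) = \phi(q)$. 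The key algebraic step is to recognize that the two exponential phases combine into a single inner product against $k$: using bilinearity and the identity $\langle k, n/\bar q\rangle = \langle k \bar n / \bar q, 1\rangle$-type manipulations (more precisely, rewriting $\langle -r, k/q\rangle = -\langle k, \bar r/\bar q\rangle$ by conjugate-symmetry of the pairing, since $\langle x, y/q\rangle$ and $\langle y, x/\bar q\rangle$ agree for the relevant real pairing used in \eqref{e:FDq}), so the combined phase is $e\bigl(\langle k, (n - \bar r)/\bar q\rangle\bigr)$.

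Then I would swap the order of summation to bring the sum over $k\in B_q$ inside, and invoke Lemma~\ref{orthoidentity-lm} (or rather Corollary~\ref{cor:Orthogonality} with $d = \bar q$, noting $\bar q \mid q$ is not generally true — so I must instead apply Lemma~\ref{orthoidentity-lm} directly on $B_{\bar q}$, which is legitimate since $\lvert B_q\rvert = \lvert B_{\bar q}\rvert = N(q)$ and the sum over a complete residue system is insensitive to which box we use). The orthogonality relation gives
\[
\sum_{k\in B_q} e\Bigl(\langle k, \tfrac{n-\bar r}{\bar q}\rangle\Bigr) =
\begin{cases} N(q) & \text{if } \bar q \mid (n - \bar r),\\ 0 & \text{otherwise.}\end{cases}
\]
Hence the whole expression equals $\#\{ n \in \mathbb A_{\bar q} : n \equiv \bar r \pmod{\bar q}\}$. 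Since $\mathbb A_{\bar q}$ is a set of representatives (those coprime to $\bar q$) for residues mod $\bar q$, this count is $1$ if $\bar r$ is coprime to $\bar q$ — equivalently $\gcd(\bar r, q) = 1$ after conjugation — and $0$ otherwise, which is exactly $\mathbf 1_{\gcd(\bar r,q)=1}$.

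The main obstacle I anticipate is purely bookkeeping: getting the conjugates and the placement of $q$ versus $\bar q$ exactly right in the pairing $\langle \cdot,\cdot\rangle$, so that the two phases genuinely telescope into one inner product against $k$, and confirming that the resulting congruence condition $\bar q \mid (n-\bar r)$ translates to the clean statement $\gcd(\bar r, q) = 1$. I would double-check the pairing convention against \eqref{e:FDq} and \eqref{eq-geometricsquaredefinition}, and verify on the small example $q = 2+i$ that the signs work out. Once the phase consolidation is done correctly, the rest is a direct application of orthogonality and the counting of coprime residues, both of which are routine.
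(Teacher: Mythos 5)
Your proposal is correct and is essentially the paper's own argument: the paper simply notes that $\tau_{\bar q}(k)$ is the Fourier coefficient over $B_q$ of the coprimality indicator and appeals to Fourier inversion, which is exactly the expand--swap--orthogonality computation you spell out; also, your worry about which box to use is unnecessary, since Lemma \ref{orthoidentity-lm} is stated in precisely the form you need (sum over $k\in B_q$ with denominator $\bar q$). One bookkeeping correction: your own symmetry rule gives $\langle -r,k/q\rangle=-\langle k, r/\bar q\rangle$, not $-\langle k,\bar r/\bar q\rangle$, so the combined phase is $e\bigl(\langle k,(n-r)/\bar q\rangle\bigr)$, the congruence is $n\equiv r \pmod{\bar q}$, and the count is $1$ iff $\gcd(r,\bar q)=1$, i.e.\ $\gcd(\bar r,q)=1$, exactly the statement; as written you make two conjugation slips that cancel, since the intermediate condition $n\equiv\bar r\pmod{\bar q}$ would literally yield $\mathbf{1}_{\gcd(r,q)=1}$, which is a genuinely different function (try $q=2+i$, $r=2+i$).
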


\begin{proof}
Note from the definition in \eqref{def-generalizedareagausssum1}, that 
\begin{equation}
    \tau_{{\bar q}}( k) 
    = \bigl\langle \mathbf{1}_{\gcd(\bar r, q)=1} ,
     e ( \langle \cdot , k/\bar q ) \bigr\rangle. 
\end{equation}
Then, the conclusion follows from general facts about orthogonal bases.


\end{proof}

\begin{lemma}\label{lm:multiplicative}
For $x\in \Z[i]$ we have
\begin{align}
    \tau_{q}(x) &=
    \sum_{d \divides (q,\bar x)} \mu(q/d) N(d).
\end{align}
In particular, $\tau_{q}(q) = \phi(q) $ and $\tau_{q}(1) = \mu(q) $. In addition, if $(q, \bar x) = 1 $ then $\tau_{q}(x) = \mu(q) $.
\end{lemma}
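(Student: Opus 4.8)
The plan is to expand the Ramanujan-type sum $\tau_q(x) = \sum_{n \in \A_q} e(\langle x, n/q\rangle)$ by removing the coprimality constraint $(n,q)=1$ via Möbius inversion over $\Z[i]$, and then apply the orthogonality relation from Lemma \ref{orthoidentity-lm} (or rather its Corollary \ref{cor:Orthogonality}) to evaluate the resulting inner sums. Concretely, write $\mathbf{1}_{(n,q)=1} = \sum_{d \divides (n,q)} \mu(d)$, valid for Gaussian integers since $\Z[i]$ is a PID and $\mu$ is the Gaussian Möbius function of \eqref{e:Mobius}. Substituting and interchanging the order of summation gives
\begin{align*}
\tau_q(x) = \sum_{d \divides q} \mu(d) \sum_{\substack{n \in B_q \\ d \divides n}} e(\langle x, n/q\rangle).
\end{align*}
In the inner sum, writing $n = d m$, the points $n \in B_q$ with $d \divides n$ are in bijection with $m \in B_{q/d}$, and $\langle x, dm/q\rangle = \langle x, m/(q/d)\rangle$ — this needs a small lemma that the lattice $d\Z[i] \cap B_q$, rescaled by $1/d$, is exactly $B_{q/d}$, which follows from the remainder-system description of $B_q$ given around \eqref{eq-geometricsquaredefinition} together with $N(q/d) = N(q)/N(d)$ points.

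Next I would identify the inner sum $\sum_{m \in B_{q/d}} e(\langle x, m/(q/d)\rangle)$ using Lemma \ref{orthoidentity-lm} applied with modulus $q/d$: rewriting $\langle x, m/(q/d)\rangle = \langle m, \bar{x}/\overline{(q/d)}\rangle$ (using that $\langle a, b/c\rangle$ is symmetric in the appropriate sense for the real inner product paired against division), the orthogonality identity shows this sum equals $N(q/d) = N(q)/N(d)$ when $\overline{(q/d)} \divides \bar{x}$, i.e. when $(q/d) \divides x$ — equivalently $\overline{q/d}\divides \bar x$ — and vanishes otherwise. Reindexing by $e = q/d$ (so $d = q/e$ ranges over divisors of $q$ as $e$ does, and $\mu(d) = \mu(q/e)$), the condition $e \divides x$ combined with $e \divides q$ becomes $e \divides (q, x)$; after the conjugation bookkeeping this is $e \divides (q,\bar x)$. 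Thus
\begin{align*}
\tau_q(x) = \sum_{e \divides (q,\bar x)} \mu(q/e) \cdot \frac{N(q)}{N(q/e)} = \sum_{e \divides (q,\bar x)} \mu(q/e) N(e),
\end{align*}
using multiplicativity of the norm, $N(q)/N(q/e) = N(e)$. This is the claimed identity.

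The three special cases follow immediately: for $x = q$ we get $(q,\bar q) = q$ up to a unit (since $N(\bar q) = N(q)$), so $\tau_q(q) = \sum_{e \divides q} \mu(q/e) N(e) = \phi(q)$, recognizing the standard Euler-product formula for $\phi$ recorded earlier in the excerpt; for $x = 1$ we get $(q,\bar 1) = 1$, so the sum has the single term $e = 1$, giving $\mu(q)N(1) = \mu(q)$; and if $(q,\bar x) = 1$ the same collapse gives $\tau_q(x) = \mu(q)$. The main obstacle I anticipate is purely bookkeeping rather than conceptual: getting the conjugations consistent between the pairing $\langle x, n/q\rangle$ used in the definition of $\tau_q$ and the pairing $\langle r, n/\bar q\rangle$ appearing in Lemma \ref{orthoidentity-lm}, and correspondingly making sure the divisibility condition lands on $(q,\bar x)$ and not on $(q,x)$. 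I would handle this by carefully tracking a single running convention for $\langle \cdot,\cdot/\cdot\rangle$ and invoking the identity $\langle a, b/c\rangle \equiv \langle b, a/\bar c\rangle$ that is implicit in the way \eqref{e:FDq} and its inverse are set up, checking it once at the start; everything else is routine divisor-sum manipulation and an appeal to the already-established orthogonality.
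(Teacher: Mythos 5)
Your derivation of the main identity is correct and follows essentially the paper's own route: remove the coprimality constraint by M\"obius inversion, evaluate the resulting complete sum over the multiples of $d$ in $B_q$ as $N(q/d)\,\mathbf{1}_{(q/d)\mid \bar x}$, and reindex $d\mapsto q/d$. The only real difference is mechanical: the paper evaluates the inner sum by expressing $\mathbf{1}_{d\mid k}$ itself as an exponential sum over $B_q$ (Corollary \ref{cor:Orthogonality}) and then counting solutions, whereas you substitute $k=dm$, use that $\{k\in B_q \colon d\mid k\}=d\,B_{q/d}$, and apply Lemma \ref{orthoidentity-lm} once at modulus $q/d$. Your rescaling claim is true and is indeed a two-line consequence of \eqref{eq-geometricsquaredefinition}, since $\langle dm,q\rangle=N(d)\langle m,q/d\rangle$ and $N(q)/N(d)=N(q/d)$; and the pairing identity $\langle a,b/c\rangle=\langle b,a/\bar c\rangle$ that you propose to check once is exactly what settles the conjugations, which land on the condition $(q/d)\mid\bar x$, matching the paper.

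There is, however, one genuinely false step, in the special cases. To get $\tau_q(q)=\phi(q)$ you assert that $(q,\bar q)=q$ up to a unit ``since $N(\bar q)=N(q)$.'' Equality of norms does not make $\bar q$ an associate of $q$: for $q=2+i$ one has $(q,\bar q)=1$, and then the identity you just proved gives $\tau_q(q)=\mu(q)=-1$, not $\phi(q)=4$; a direct computation of $\sum_{n\in\A_{2+i}}e(\langle 2+i, n/(2+i)\rangle)$ confirms the value $-1$. What the proven formula actually yields is $\tau_q(\bar q)=\phi(q)$, and more generally $\tau_q(x)=\phi(q)$ whenever $q\mid\bar x$; the ``in particular'' clause as printed in the lemma carries the same conjugation slip, but your argument for it, as written, does not go through. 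Relatedly, your mid-proof sentence briefly states the orthogonality condition as ``$(q/d)\divides x$'' before reverting to $(q/d)\mid\bar x$; only the latter is correct, and since the whole point of the conjugation bookkeeping you flag is to distinguish $x$ from $\bar x$, that distinction should be carried consistently through the special cases rather than waved at the end.
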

\begin{proof}
Note that thanks to Corollary \ref{cor:Orthogonality} we have that if $d \divides q $ then
\begin{align*} 
    \sum_{\substack{k\in B_q\\ d|k}} e\left(\langle x,\frac{k}{q}\rangle\right) 
        & = \sum_{k\in B_q} e\left(\langle x,\frac{k}{q}\rangle\right) \mathbf{1}_{d|k}
        \\ 
        & = \sum_{k\in B_q} e\left(\langle x,\frac{k}{q}\rangle\right) 
        \frac{1}{N(q)}
        \sum_{r\in B_q} e\left(\langle -r,\frac{ k}{ d}\rangle\right) 
        \\ 
        & = \frac{1}{N(q)}
        \sum_{r\in B_q} \sum_{k\in B_q} 
        e\left(\langle \frac{x- r\frac{\bar{q}}{\bar d}}{\bar{q}},k \rangle\right) 
        \\ 
        & = \sum_{r\in B_q}  \mathbf{1}_{{\bar q}| x-  r\frac{{\bar q}}{{\bar d}}}
        \\ 
        & = N(\frac{q}{d}) \mathbf{1}_{\frac{ q}{d} \divides \bar x}.
\end{align*}
The last equality comes from counting the number of $r$'s in $B_q$ for which the indicator is non-zero. Let $\bar x = \frac{ q}{d} x' $. Then $\bar x - \bar r \frac{ q}{d} \equiv 0 \mod  q  \, \Leftrightarrow \, \frac{ q}{d} (x' - \bar r) \equiv 0 \mod \bar q$. This means that $ x' \equiv \bar r \mod  d $, which means that there exists a unique such $\bar r \in B_{ d} $ and $N(q/d)$ of them in $B_{ q}$.

Hence, and with the assistance of the Inclusion-Exclusion principle, we observe that
\begin{align}\label{e:multiplicative}
    \tau_{q}(x) &= \sum_{\substack{k\in B_q\\ (k,q)=1}}e(\langle x,\tfrac{k}{q}\rangle) 
    \\
    &= \sum_{d \divides q} \mu(d) \sum_{\substack{k\in B_q\\ d|k}} e(\langle x,\tfrac{k}{q}\rangle) 
    \\
    & = 
    \sum_{d \divides q} \mu(d) N(\tfrac{q}{d}) \mathbf{1}_{\frac{ q}{d} \divides \bar x}
    \\
    & =
    \sum_{d \divides q} \mu(q/d) N(d) \mathbf{1}_{d \divides \bar x }
    \\
    & =
    \sum_{d \divides (q, \bar x)} \mu(q/d) N(d).
\end{align}
\end{proof}

\begin{corollary}\label{crlr:multiplicative}
The function $ \tau_{q}(x)$ is multiplicative and we have that 
\begin{align}
    \tau_{q}(x) &= \mu\Big(\frac{q}{(q, \bar x)}\Big) \frac{\phi(q)}{\phi(\frac{q}{(q, \bar x)})}.
\end{align}
\end{corollary}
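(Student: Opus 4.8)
The plan is to build directly on the closed form established in Lemma~\ref{lm:multiplicative}, namely $\tau_{q}(x) = \sum_{d \divides (q,\bar x)} \mu(q/d)\,N(d)$. First I would record multiplicativity in $q$. If $(q_1,q_2)=1$, then $(q_1q_2,\bar x) = (q_1,\bar x)(q_2,\bar x)$, so every divisor $d$ of $(q_1q_2,\bar x)$ factors uniquely as $d=d_1d_2$ with $d_i \divides (q_i,\bar x)$; moreover $q_1/d_1$ and $q_2/d_2$ are coprime, whence $\mu(q_1q_2/d) = \mu(q_1/d_1)\,\mu(q_2/d_2)$, and $N(d)=N(d_1)N(d_2)$ since the norm is multiplicative. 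Summing factors the double sum, giving $\tau_{q_1q_2}(x) = \tau_{q_1}(x)\,\tau_{q_2}(x)$. The right-hand side of the asserted identity is likewise multiplicative in $q$: the map $q \mapsto (q,\bar x)$, hence $q \mapsto q/(q,\bar x)$, respects coprime factorizations in the PID $\Z[i]$, and $\mu$ and $\phi$ are multiplicative (with $\phi$ nonvanishing), so $\mu\!\left(q/(q,\bar x)\right)\phi(q)/\phi\!\left(q/(q,\bar x)\right)$ is multiplicative in $q$. It therefore suffices to check the identity at prime powers $q=p^k$.

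For $q=p^k$, write $p^j = (p^k,\bar x)$ with $0 \le j \le k$. Then Lemma~\ref{lm:multiplicative} gives $\tau_{p^k}(x) = \sum_{i=0}^{j}\mu(p^{k-i})\,N(p)^i$, and since $\mu(p^m)=0$ for $m\ge 2$, only indices $i$ with $i\ge k-1$ contribute. Splitting into cases: if $j=k$, the surviving terms are $i=k$ and $i=k-1$, giving $N(p)^k-N(p)^{k-1}=\phi(p^k)$; if $j=k-1$, only $i=k-1$ survives, giving $-N(p)^{k-1}$; if $j\le k-2$, no term survives and $\tau_{p^k}(x)=0$ (the degenerate case $k=1$, $j=0$ is the $j=k-1$ case and returns $-1$). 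On the other side, $q/(q,\bar x)=p^{k-j}$, so $\mu\!\left(q/(q,\bar x)\right)$ equals $1$, $-1$, $0$ in these same three cases, while $\phi(p^k)/\phi(p^{k-j})$ equals $\phi(p^k)$, $N(p)^{k-1}$, and (irrelevantly, being annihilated by $\mu$) anything, respectively. The two expressions match case by case, and multiplicativity of both sides then yields the identity for all $q$.

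I do not anticipate a genuine obstacle; the argument is bookkeeping with Gaussian divisors once Lemma~\ref{lm:multiplicative} is in hand. The only delicate point is the reconciliation when $j=0$, where the formula reads $\mu(q)\,\phi(q)/\phi(q)=\mu(q)$ — equal to $0$ for $k\ge 2$ and to $-1$ for $k=1$ — and must be matched against the direct evaluation; keeping track of exactly which indices $i$ remain after imposing both $i\le j$ and $k-i\le 1$ is the step most prone to an off-by-one slip.
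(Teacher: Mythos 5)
Your proposal is correct, and it takes a different route from the paper. The paper's proof applies the Chinese Remainder Theorem at the level of residues: writing $d=(q,\bar x)$, it splits $\A_q$ as $\A_{q/d}\times\A_d$ to get $\tau_q(x)=\tau_{q/d}(x)\,\phi(d)$, and then invokes Lemma \ref{lm:multiplicative} together with multiplicativity of $\phi$. You never perform that splitting; instead you read multiplicativity in $q$ directly off the divisor-sum formula $\tau_q(x)=\sum_{d\mid(q,\bar x)}\mu(q/d)N(d)$, note that the right-hand side of the claimed identity is also multiplicative in $q$, and verify the identity at prime powers $q=p^k$ by the case analysis $j=k$, $j=k-1$, $j\le k-2$ (your bookkeeping there is correct, including the $k=1$, $j=0$ case). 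What your route buys is robustness for non-squarefree $q$: the paper's CRT factorization tacitly needs $(d,q/d)=1$ and $(q/d,\bar x)=1$, which can fail (e.g. $q=p^2$ with $p\,\|\,\bar x$, where $\tau_{p^2}(x)=-N(p)$ but the factorization $\tau_{p}(x)\phi(p)=\phi(p)^2$ would be wrong), even though the final identity remains true there, as your prime-power check shows; so your argument covers uniformly a case the paper's proof treats only implicitly. What the paper's route buys is brevity and a structural identity $\tau_q(x)=\tau_{q/d}(x)\phi(d)$ that is reused in spirit elsewhere (e.g. in the Cohen-type manipulations), at the cost of being airtight only when the relevant factors are coprime.
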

\begin{proof}
Let $d = (q,\bar x) $. A direct consequence of the Chinese Remainder Theorem gives us
\begin{align*}
    \tau_{q}(x) 
        & = \sum_{r \in \A _{ q} } e ( \langle r, \tfrac{x}{\bar q} \rangle ) 
        \\
        & = \sum_{m \in \A _{ q/d } } \sum_{k \in \A _{d} } e ( \langle k \frac{q}{d} + md, \frac{x}{\bar q} \rangle ) 
        \\
        & = \sum_{m \in \A _{ q/d } } e ( \langle   m, \tfrac{x}{\bar q / \bar d} \rangle)  \sum_{k \in \A _{d} } e ( \langle k  , \tfrac{x}{\bar d} \rangle ) 
        \\
        & = \tau_{q/d}(x) \, \phi (d) 
\end{align*}
and the result follows immediately from the previous lemma and the multiplicative properties of $\phi$.
\end{proof}

With these identities in mind, we prove Cohen's identity.

\begin{lemma} \label{xplusrestimate-lm}
For $x\in \Z[i]$, the following identity holds: 
\begin{align*}
        \sum_{r \in \A _{\bar q} } \tau_{q}(x+r)
            = \mu (q) \tau_{q}(x).
\end{align*}
\end{lemma}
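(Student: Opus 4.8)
The plan is to expand both Ramanujan sums via the multiplicative formula in Lemma \ref{lm:multiplicative} (or equivalently Corollary \ref{crlr:multiplicative}), interchange the order of summation, and collapse the inner sum over $r \in \A_{\bar q}$ using the orthogonality of additive characters from Lemma \ref{orthoidentity-lm}. Concretely, write $\tau_q(x+r) = \sum_{k \in \A_q} e\bigl(\langle x+r, \tfrac{k}{\bar q}\rangle\bigr)$ — using the alternate presentation of $\tau_q$ that appears in the proof of Corollary \ref{crlr:multiplicative}, so that the inner variable lives in $\A_q$ and the character argument has $\bar q$ in the denominator, matching the summation set $\A_{\bar q}$ for $r$. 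Then
\begin{align*}
\sum_{r \in \A_{\bar q}} \tau_q(x+r)
= \sum_{k \in \A_q} e\bigl(\langle x, \tfrac{k}{\bar q}\rangle\bigr) \sum_{r \in \A_{\bar q}} e\bigl(\langle r, \tfrac{k}{\bar q}\rangle\bigr).
\end{align*}
So the whole problem reduces to evaluating the inner character sum $c_k := \sum_{r \in \A_{\bar q}} e\bigl(\langle r, \tfrac{k}{\bar q}\rangle\bigr)$, which is itself a Ramanujan-type sum: by definition \eqref{def-generalizedareagausssum1} this is exactly $\tau_{\bar q}(\bar k)$ (up to the conjugation bookkeeping relating $\langle r, k/\bar q\rangle$ to $\langle \bar k, r/\bar q\rangle$-type pairings), so Corollary \ref{crlr:multiplicative} applies to it.

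The key computational step is then: for $k \in \A_q$, i.e. $(k,q)=1$, we need $c_k = \tau_{\bar q}(\bar k)$ with $(\bar q, k) = 1$ (since $(q,k)=1 \Leftrightarrow (\bar q, \bar k)=1$, and conjugation is a symmetry of gcd). By the last assertion of Lemma \ref{lm:multiplicative}, when the argument is coprime to the modulus the Ramanujan sum equals $\mu$ of the modulus, so $c_k = \mu(\bar q) = \mu(q)$ — a constant independent of $k$. Substituting back,
\begin{align*}
\sum_{r \in \A_{\bar q}} \tau_q(x+r) = \mu(q) \sum_{k \in \A_q} e\bigl(\langle x, \tfrac{k}{\bar q}\rangle\bigr) = \mu(q)\, \tau_q(x),
\end{align*}
which is the claimed identity.

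The step I expect to require the most care is not the conceptual structure but the bookkeeping of conjugates and the direction of the pairing: one must be sure that $r \mapsto e(\langle r, k/\bar q\rangle)$, as $r$ ranges over $\A_{\bar q}$ and $k$ over $\A_q$, genuinely reproduces the defining form of a $\tau$-sum with coprime argument, and that the relevant box ($B_q$ versus $B_{\bar q}$) and the "reduced residue" set match up correctly — the paper has already flagged (after the Parseval Proposition) that $B_{\bar q} = \overline{B_q}$ and that $\phi(q) = \phi(\bar q)$, $\A_q$ and $\A_{\bar q}$ have the same cardinality, so these facts are available. A secondary point to verify is that the interchange of the two finite sums is trivially valid (it is, everything is finite) and that the shift $x \mapsto x+r$ does not move $k$ outside $\A_q$ (it does not, since $k$ is summed independently). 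Once the character-sum evaluation $c_k = \mu(q)$ for all $k \in \A_q$ is nailed down, the identity falls out in one line.
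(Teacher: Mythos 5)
Your overall strategy is viable and is in fact more direct than the paper's: the paper first replaces the constraint $r\in\A_{\bar q}$ by a coprimality indicator and expands that indicator via Lemma \ref{indicatoridentity-lm}, then collapses a full-box sum using the orthogonality relation \eqref{e:orthognality}, and finally invokes the constancy $\tau_{\bar q}(k)=\tau_{\bar q}(1)$ on reduced classes as in \eqref{e:cyclicramanujan}; you instead expand $\tau_q(x+r)$ itself, swap the two finite sums, and evaluate the inner sum over $r\in\A_{\bar q}$ directly as a Ramanujan sum at a coprime argument, so that only Lemma \ref{lm:multiplicative} is needed. With the paper's pairing this does close: by definition \eqref{def-generalizedareagausssum1},
\begin{align*}
\sum_{r\in\A_{\bar q}}\tau_q(x+r)
=\sum_{n\in\A_q} e\bigl(\langle x,\tfrac nq\rangle\bigr)\sum_{r\in\A_{\bar q}} e\bigl(\langle r,\tfrac nq\rangle\bigr),
\end{align*}
and since $\langle r,\tfrac nq\rangle=\operatorname{Re}(r\bar n/\bar q)=\operatorname{Re}(n\bar r/q)=\langle n,\tfrac r{\bar q}\rangle$, the inner sum is exactly $\tau_{\bar q}(n)$; for $n\in\A_q$ one has $(\bar q,\bar n)=\overline{(q,n)}=1$, so Lemma \ref{lm:multiplicative} applied with modulus $\bar q$ gives $\tau_{\bar q}(n)=\mu(\bar q)=\mu(q)$, and the identity follows.

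However, the bookkeeping you flagged is exactly where your write-up goes wrong, in two places. First, the expansion you start from, $\tau_q(x+r)=\sum_{k\in\A_q}e(\langle x+r,k/\bar q\rangle)$, is not a valid presentation of $\tau_q$: the alternate form used in the proof of Corollary \ref{crlr:multiplicative} swaps the two slots of the pairing \emph{and} conjugates the denominator simultaneously, whereas you conjugated only the denominator. Concretely, for $q=2+i$ and argument $1$ the definition gives $\tau_q(1)=e(1/5)+e(2/5)+e(3/5)+e(4/5)=-1=\mu(q)$, while your sum equals $e(-1/5)+e(-2/5)+e(1/5)+1\neq -1$ (note that $k=1+2i\in\A_q$ has $k/\bar q=i\in\Z[i]$). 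Second, and relatedly, the coprimality you assert for the inner sum, namely $(\bar q,k)=1$, is not implied by $k\in\A_q$: the same example $k=1+2i$, $q=2+i$ has $(k,q)=1$ but $(k,\bar q)=\bar q$. The condition that actually enters Lemma \ref{lm:multiplicative} for $\tau_{\bar q}(n)$ is $(\bar q,\bar n)=1$, which does follow from $(q,n)=1$. Both slips are repaired by running your argument with the pairing exactly as in \eqref{def-generalizedareagausssum1}, as displayed above; after that correction your proof is complete and shorter than the paper's, trading its indicator-expansion and box-orthogonality steps for the single evaluation $\tau_{\bar q}(n)=\mu(\bar q)$ at coprime arguments.
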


\begin{proof}
We have
\begin{align*}
     \sum_{r \in \A _{\bar q} } \tau_{q}(x+r) 
     & =  \sum_{r\in B_{\bar q}} 1_{(  r,\bar q)=1} \tau_{q}(x+r)
     \\
      & =\sum_{r\in B_{\bar q}} \frac{1}{N(q) } 
        \sum_{k\in B_q} e\left( \langle   -r,\frac{k}{q}\rangle\right) \,  \tau_{\bar q}(k) \, \tau_{q}(x+r) 
     \\
     & = \frac{1}{N(q)}   \sum_{k\in B_q}\tau_{\bar q}(k) 
        \sum_{r\in B_{\bar q}}  e\left( \langle  -r,\frac{ k}{q}\rangle\right) \tau_{q}(x+r)
\end{align*}

Now we compute the inner sum.
\begin{align*}
    \sum_{r\in B_{\bar q}}  e\left( \langle  -r,\frac{ k}{q}\rangle\right) \tau_{q}(x+r)
    & = 
    \sum_{r\in B_{\bar q}}  e\left( \langle  -r,\frac{ k}{q}\rangle\right)  \sum_{n \in \A _{ q} } e\left(\langle \frac{  n}{ q} ,  x+r\rangle\right)
    \\ & = 
    \sum_{n \in \A _{q} }
    e\left(\langle \frac{  n}{ q} ,  x\rangle\right)
    \sum_{r\in B_{\bar q}}  e\left(\langle \frac{  n-k}{ q} ,  r\rangle\right)
    \\ & = 
    \sum_{n \in \A _{ q} }
    e\left(\langle \frac{  n}{ q} ,  x\rangle\right) N(q) \mathbf{1}_{ q \divides n-k}
    \\ & = 
    N(q) e\left(\langle \frac{  k}{ q} ,  x\rangle\right) \mathbf{1}_{(k,q) = 1}.
\end{align*}
In addition, note that we have if $(k,q)=1$ then $\tau_{\bar q}(k) = \tau_{\bar q}(1) $.
Indeed, this means that $\bar k$ is a unitary element modulo $\bar q$ so the set $\{ \bar k s \, : \, s \in B_{\bar q} \}$ is just a rearrangement of $B_{\bar q} $, and so
 \begin{align}\label{e:cyclicramanujan}
     \tau_{\bar q}(k)
        & = \sum_{s\in \A_{\bar q}} e\left(\langle k,\frac{s}{\bar q}\rangle\right) 
        \\
        & = \sum_{s\in \A_{\bar q}} e\left(\langle 1,\frac{\bar k s}{\bar q}\rangle\right)
        \\
        & =  \sum_{s\in \A_{\bar q}} e\left(\langle 1,\frac{ s}{\bar q}\rangle\right)
        \\ & = \tau_{\bar q}(1). 
 \end{align}
Therefore we have that
\begin{align*}
\sum_{r\in \A_{\bar q}} \tau_{q}(x+r) 
     & =
      \sum_{k\in B_q}\tau_{\bar q}(k) e\left(\langle \frac{  k}{ q} ,  x\rangle\right) \mathbf{1}_{(k,q) = 1}
      \\ & =
      \tau_{\bar q}(1)
      \sum_{k\in B_q}e\left(\langle \frac{  k}{ q} ,  x\rangle\right) \mathbf{1}_{(k,q) = 1}
      \\ & =
      \tau_{\bar q}(1) \tau_{ q}(x).
\end{align*}
Finally, $\tau _{\bar q}(1)= \mu (q)$.  
\end{proof}

\begin{corollary}\label{c:GaussianCohenCor}
For $x\in \Z[i]$ we have
\begin{align} \label{e:12}
   \bigg|  \sum_{r\in \A_{\bar q}} \tau_{q}(x+r) \bigg|
    & \ll N(\gcd(x,q)) \, N(q)^{\epsilon}.
\end{align}
\end{corollary}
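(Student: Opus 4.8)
The plan is to combine Cohen's identity (Lemma \ref{xplusrestimate-lm}) with the explicit multiplicative formula for the Ramanujan sum from Corollary \ref{crlr:multiplicative}. By Lemma \ref{xplusrestimate-lm}, the sum in question equals $\mu(q)\,\tau_q(x)$, so up to the harmless factor $|\mu(q)| \le 1$ it suffices to bound $|\tau_q(x)|$. Writing $d = (q,\bar x)$, Corollary \ref{crlr:multiplicative} gives
\begin{equation*}
 \tau_q(x) = \mu\!\Big(\frac{q}{d}\Big)\,\frac{\phi(q)}{\phi(q/d)},
\end{equation*}
so $|\tau_q(x)| \le \phi(q)/\phi(q/d) \le N(q)/\phi(q/d)$. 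The point is then to show $N(q)/\phi(q/d) \ll N(d)\,N(q)^{\epsilon}$, and since $N(q) = N(d)\,N(q/d)$, this reduces to the bound $N(q/d)/\phi(q/d) \ll N(q/d)^{\epsilon}$.

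The remaining step is the standard estimate $N(m)/\phi(m) \ll N(m)^{\epsilon}$ for Gaussian integers $m$, which follows exactly as over $\Z$: from the product formula $\phi(m) = N(m)\prod_{p \mid m}(1 - N(p)^{-1})$ one gets $N(m)/\phi(m) = \prod_{p \mid m}(1 - N(p)^{-1})^{-1}$, and since each factor with $N(p)\ge 2$ is at most $2$ and the number of distinct prime divisors $p$ with $N(p)$ bounded is controlled, a dyadic/Mertens-type argument (or simply the fact that the Gaussian divisor function satisfies $d(m) \ll N(m)^{\epsilon}$, together with $(1-t)^{-1}\le 1 + 2t$ for $t \le 1/2$) yields $\prod_{p\mid m}(1-N(p)^{-1})^{-1} \ll_\epsilon N(m)^{\epsilon}$. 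Applying this with $m = q/d$ and recalling $N(d) = N(\gcd(x,q))$ when we interpret $d = (q,\bar x)$ — noting $N(\gcd(q,\bar x)) = N(\gcd(\bar q, x)) = N(\gcd(x,q))$ up to units since norms are invariant under conjugation and units — gives
\begin{equation*}
 \bigg| \sum_{r\in \A_{\bar q}} \tau_{q}(x+r) \bigg| = |\mu(q)|\,|\tau_q(x)| \le \frac{N(q)}{\phi(q/d)} = N(d)\cdot \frac{N(q/d)}{\phi(q/d)} \ll N(\gcd(x,q))\, N(q)^{\epsilon},
\end{equation*}
as claimed.

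I expect no serious obstacle here; the only mild subtlety is bookkeeping with conjugates (matching $(q,\bar x)$ against $\gcd(x,q)$) and confirming that the multiplicativity of $\phi$ and the divisor-bound argument transfer verbatim from $\Z$ to $\Z[i]$, which they do because $\Z[i]$ is a PID with finitely many units and the norm is completely multiplicative. One could alternatively avoid Corollary \ref{crlr:multiplicative} entirely and bound $|\tau_q(x)| \le \sum_{d \mid (q,\bar x)} N(d) \le N((q,\bar x))\, d((q,\bar x)) \ll N(\gcd(x,q))\,N(q)^\epsilon$ directly from Lemma \ref{lm:multiplicative}, which is perhaps the cleanest route and sidesteps the totient estimate altogether.
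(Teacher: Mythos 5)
Your proposal is correct in substance and follows essentially the same route as the paper: Cohen's identity (Lemma \ref{xplusrestimate-lm}) reduces the sum to $\mu(q)\,\tau_q(x)$, and $\lvert\tau_q(x)\rvert$ is then bounded via Lemma \ref{lm:multiplicative} and Corollary \ref{crlr:multiplicative} together with the standard estimate $N(m)/\phi(m)\ll N(m)^{\epsilon}$ (or, as in your alternative, the divisor bound), which is exactly the paper's two-line argument. One caveat: your bookkeeping claim $N(\gcd(q,\bar x)) = N(\gcd(\bar q,x)) = N(\gcd(x,q))$ is not literally true in $\Z[i]$ --- conjugation does give $N(\gcd(q,\bar x)) = N(\gcd(\bar q,x))$, but this can differ from $N(\gcd(q,x))$ (take $x=q$ a prime with both coordinates nonzero: then $\gcd(q,\bar x)$ is a unit while $\gcd(q,x)=q$), so what your argument actually yields is the bound with $N(\gcd(\bar x,q))$. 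This is the same imprecision already present in the paper, whose proof likewise ends with $N(\gcd(\bar x,q))$ while the statement reads $N(\gcd(x,q))$, and it is harmless in the application (Proposition \ref{p:BourgainRamanujan}), where the sum over $x$ is invariant under conjugation.
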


\begin{proof}
By Lemma \ref{xplusrestimate-lm}, it suffices to find a bound for $\tau_{ q}(x)$.
The inequality follows immediately from Lemma \ref{lm:multiplicative} and Corollary \ref{crlr:multiplicative}
\begin{align*}
    |\tau_{q}(x)| \ll N\big(\gcd(\bar x,q)\big) {N(q)}^{\epsilon}.
\end{align*}
\end{proof}

Now we are ready to prove the Gaussian version of an inequality, originally due to Bourgain, involving the Ramanujan sums. 
It assures us that while the summands above can, in general, be as big as $q$, this happens infrequently as $x$ varies. 

\begin{proposition}\label{p:BourgainRamanujan}
For every $B, k>2$, integers $N > N _{k, B}$ and $Q \leq (\log N)^B$ we have
\begin{align} \label{e:BourgainRamanujan} 
   \frac{1}{N} \sum_{N(x)<N}    \bigg[  \sum_{q \colon q< Q}  \lvert \tau_{q}(x+r) \rvert \bigg]^k \ll Q^{k+\epsilon}.
 \end{align}
\end{proposition}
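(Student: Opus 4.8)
The plan is to bound the $k$-th moment by expanding the inner sum and using the pointwise bound $\lvert \tau_q(x+r)\rvert \ll N(\gcd(x+r,q))N(q)^\epsilon$ from Corollary \ref{c:GaussianCohenCor} (or directly the bound on $\tau_q$), reducing matters to counting, on average over $x$, how often $\gcd(x+r,q)$ is large. First I would absorb the shift: since $r$ is fixed, replacing $x+r$ by $x$ only shifts the range of summation by $O(\sqrt N)$ points, and since $Q \leq (\log N)^B \ll N$, the contribution of the $O(\sqrt N \cdot Q)$-ish boundary lattice points is negligible compared to the main term $NQ^{k+\epsilon}$; so it suffices to bound $\frac1N\sum_{N(x)<N}\big[\sum_{q<Q}\lvert\tau_q(x)\rvert\big]^k$.

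Next I would expand the $k$-th power as a sum over $(q_1,\dots,q_k)$ with each $q_j < Q$ and use $\prod_j \lvert\tau_{q_j}(x)\rvert \ll N(q_1\cdots q_k)^\epsilon \prod_j N(\gcd(\bar x,q_j))$. Write $d_j = \gcd(\bar x, q_j)$; the key is that $d_j \divides \bar x$ forces $\operatorname{lcm}(d_1,\dots,d_k) \divides \bar x$, so for fixed $(q_j)$ and fixed divisors $d_j \divides q_j$, the number of $x$ with $N(x)<N$ and $\gcd(\bar x,q_j)=d_j$ for all $j$ is at most the number of multiples of $\operatorname{lcm}(d_j)$ in a ball of norm $N$, which by \eqref{sumofsqestimate} is $\ll N / N(\operatorname{lcm}(d_1,\dots,d_k))$. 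Therefore
\begin{align*}
\frac1N\sum_{N(x)<N}\Big[\sum_{q<Q}\lvert\tau_q(x)\rvert\Big]^k
&\ll Q^\epsilon \sum_{q_1,\dots,q_k<Q}\ \sum_{\substack{d_j\divides q_j}} \frac{\prod_j N(d_j)}{N(\operatorname{lcm}(d_1,\dots,d_k))}.
\end{align*}
Since $\prod_j N(d_j) \leq N(\operatorname{lcm}(d_1,\dots,d_k)) \cdot \prod_{j} N(d_j)/\!\!\max \ldots$ — more cleanly, $\prod_j N(d_j) \leq N(\operatorname{lcm}(d_j))^{k-1}\max_j N(d_j) \leq N(\operatorname{lcm})^{k}$ is too lossy; instead I would use that each $d_j \leq q_j < Q$ in norm is $\ll Q^2$ and bound the ratio crudely: $\prod_j N(d_j)/N(\operatorname{lcm}(d_1,\dots,d_k)) \ll (\max_j N(d_j))^{k-1} \ll Q^{2(k-1)}$, while the number of tuples $(q_1,\dots,q_k)$ is $\ll Q^{2k}$ (since each $q_j$ ranges over $\ll Q^2$ Gaussian integers of norm $<Q$... ) — I would sharpen this by organizing the sum around $\operatorname{lcm}$ and using the standard divisor-bound estimate $\sum_{q<Q}\tau(q) \ll Q^{1+\epsilon}$ in $\Z[i]$, to land the total at $\ll Q^{k+\epsilon}$.

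The main obstacle is making the last counting step genuinely sharp: a naive bound on $\prod_j N(d_j)/N(\operatorname{lcm})$ loses too much, and one must exploit that when several $d_j$ share common factors the number of available $x$ drops accordingly, so that the gain $1/N(\operatorname{lcm})$ precisely compensates. The right bookkeeping is to first fix $e = \operatorname{lcm}(d_1,\dots,d_k)$, note $N(e) \ll Q^{2}$ trivially but more importantly $\sum_{e} 1/N(e) \cdot (\#\{(q_j,d_j): \operatorname{lcm} d_j = e, d_j\divides q_j<Q\}) \ll Q^{k+\epsilon}$, which follows because for each $e$ the number of factorizations and of multiples $q_j$ of each $d_j$ below $Q$ is controlled by Gaussian divisor bounds $N(\cdot)^\epsilon$ and by $\#\{q_j<Q: d_j\divides q_j\}\ll Q^2/N(d_j)$. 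Combining $\prod_j Q^2/N(d_j)$ with $\prod_j N(d_j) = N(e)\cdot(\text{stuff} \leq N(e)^{k-1})$ and the factor $1/N(e)$ from the $x$-count gives $Q^{2k} \cdot N(e)^{k-1}/N(e) \cdot 1/N(e) = Q^{2k}N(e)^{k-3}$, and summing $N(e)^{k-3}$ over $N(e)<Q^2$ — here one sees the exponent must instead be arranged as $Q^{k+\epsilon}$, so the precise grouping (which powers of $Q$ come from the $q_j$-count versus the $x$-count) has to be done carefully; this balancing is exactly the Gaussian analogue of Bourgain's original lemma and is where essentially all the work lies. I would model the final computation on the one-dimensional argument in \cite{MR1019960} (or its exposition in \cite{MR4512201}), substituting $N(\cdot)$ for absolute value and \eqref{sumofsqestimate} for the trivial lattice-point count throughout.
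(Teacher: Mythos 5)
You set up exactly as the paper does --- the Cohen-type bound $\lvert \tau_q(x)\rvert \ll N(\gcd(\bar x,q))\,N(q)^{\epsilon}$ from Corollary \ref{c:GaussianCohenCor}, absorption of the shift $r$, and expansion of the $k$-th power over tuples $(q_1,\dots,q_k)$ --- but the substance of the proposition is the counting estimate that comes next, and there your proposal has a genuine gap. Your own bookkeeping does not close: writing $d_j=\gcd(\bar x,q_j)$ and $e=\lcm(d_1,\dots,d_k)$, the factor $\prod_j N(d_j)$ should cancel exactly against the multiple counts $\#\{q_j : d_j\mid q_j\}\ll Q/N(d_j)$ once the order of summation is swapped; instead you factor $\prod_j N(d_j)$ as ``$N(e)\cdot(\text{stuff}\leq N(e)^{k-1})$'', arrive at a bound of the shape $Q^{2k}N(e)^{k-3}$ whose sum over $e$ far exceeds $Q^{k+\epsilon}$, and then concede that the ``precise grouping has to be done carefully'' and is ``where essentially all the work lies'', deferring it to the one-dimensional references. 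That deferred balancing \emph{is} the proposition; leaving it to a citation of the $\Z$ argument is not a proof. A secondary slip feeds the confusion: the number of Gaussian integers with $N(q)<Q$ (the reading the paper uses for ``$q<Q$'') is $\ll Q$, not $\ll Q^2$, which is where your stray powers $Q^{2k}$ come from.

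For the record, your lcm-of-gcds route can be finished: for fixed $(d_1,\dots,d_k)$ one has $\prod_j\#\{N(q_j)<Q:\ d_j\mid q_j\}\ll Q^{k}/\prod_j N(d_j)$, so after swapping the sums the $\prod_j N(d_j)$ cancels and one is left with $Q^{k}\sum_{(d_j)}N(\lcm(d_1,\dots,d_k))^{-1}\ll Q^{k+\epsilon}$, the last sum controlled by grouping tuples according to their lcm and using divisor bounds. The paper avoids the gcd bookkeeping altogether: it observes that $x\mapsto\prod_i N(\gcd(x,q_i))$ is periodic with period $\mathfrak L=N(\lcm(q_1,\dots,q_k))\leq Q^{k}\ll N$, restricts to a single period, and uses multiplicativity in the $q_i$ to reduce to prime powers $q_i=\rho^{r_i}$, where counting the $x$ with $\rho^{a}\| x$ gives $\sum_{N(x)<\mathfrak L}\prod_i N(\gcd(x,q_i))\ll\prod_i N(q_i)\leq Q^{k}$; the proof concludes with $\sum_{(q_i)}\mathfrak L^{-1}\ll Q^{\epsilon}$. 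Until one of these completions is actually carried out, your write-up establishes only the reduction, not the estimate \eqref{e:BourgainRamanujan}.
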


\begin{proof}
The proof of this proposition is inspired from a proof due to Bourgain \cite{MR1209299}.
In view of \eqref{e:12}, it suffices to show that 
\begin{equation}
\frac{1}{N} \sum_{N(x)<N} \biggl(\sum_{N(q)<Q} N(\gcd(x,q))\biggr)^k \ll Q^{k+\epsilon}.
\end{equation}
Expanding out the $k$th power, we have 
\begin{align*}
    \sum_{N(q_1),N(q_2),\cdots,N(q_k) < Q} \frac{1}{N}\sum_{N(x)<N} \prod_{i=1}^k N(\gcd(x,q_i)) 
\end{align*}
The function  $ x \to  \prod_{i=1}^k N(\gcd(x,q_i)) $ has period of $\mathfrak{L}:=N(\lcm(q_1,\cdots,q_k))$.  
We are free to restrict attention to the case in which $N$ is much larger than $\mathfrak{L}\leq Q^k$.  
Thus, the bound is reduced to establishing that 
\begin{align*}
    \sum_{N(q_1),N(q_2),\cdots,N(q_k) < Q} \frac{1}{\mathfrak{L}}\sum_{N(x)<\mathfrak{L}} \prod_{i=1}^k N(\gcd(x,q_i)) 
    \ll Q ^{k+ \epsilon }. 
\end{align*}
We will establish a bound for the inner sum, namely 
\begin{equation}
\sum_{N(x)<\mathfrak{L}} \prod_{i=1}^k N(\gcd(x,q_i)) \ll Q^k. 
\end{equation}
The sum above is multiplicative in the variables $q_1, \cdots, q_k$.   
So, specializing to the case of  $q_i=\rho^{r_i}$, we can estimate as follows. 
Assume that $r = \max r_{i} $. Then 
$\mathfrak{L} =  N(\rho^{r}).$
Let $\rho^a\| x$. There are 
$\mathfrak{L}  N(\rho^{-a}) = N(\rho^{r-a})$
such $x$ with $N(x)<\mathfrak{L}$. Also $N(\gcd(x,\rho^{r_i})) = N(\rho^{\min (r_i,a) })$. So
\begin{align} \label{e:MULTI}
    \sum_{N(x)< \mathfrak{L}} \prod_{i=1}^k N(\gcd(x,\rho^{r_i}))  &\ll N(\rho^{r-a})  \prod_{i=1}^{k} N(\rho^{\min (r_{i},a) }) 
\\ & \ll  \prod_{i=1}^{k}  N(\rho ^{r_i}). 
\end{align}
Above, we can assume that $r = \max r_i = r_1$, and then estimate  
\begin{equation}
N(\rho^{\min (r_{i},a) }) \leq 
\begin{cases}
N( \rho ^a)  & i=1
\\
N(\rho^{r_i})  & 2\leq i \leq k 
\end{cases}.
\end{equation}
Using \eqref{e:MULTI}, the multiplicative property implies that 
\begin{equation}
\sum_{N(x)<\mathfrak{L}} \prod_{i=1}^k N(\gcd(x,q_i)) \ll 
  \prod_{i=1}^{k}  N(q_i)  \leq Q^k. 
\end{equation}
Thus, to conclude the estimate, we note that 
\begin{align*}
    \sum_{N(q_1),N(q_2),\cdots,N(q_k) < Q} \frac{1}{\mathfrak{L}} \ll Q^{\epsilon}
\end{align*}
This completes the proof.
\end{proof}

\bigskip
\section{The Vinogradov Inequality}

We prove an analogue of the Vinogradov inequality for $\Lambda$  on $\Z[i]$. 
That is, we control the Fourier transform of averages of the von Mangoldt function over a sector \eqref{e:omega},  
at a point which is close to a rational with relatively large denominator.

\begin{theorem}\label{vingradovgaussian}
Let $\alpha\in \mathbb{C}$ and  $a,q\in \Z[i]$ with $0<N(a)<N(q) < \sqrt N $ $\gcd(a,q)=1$ and  $N(\alpha-\frac{a}{q})<\frac{1}{N(q)^2}$.  Fix $\omega \subset \mathbb T $.  For $N > N _{\omega }$, we have   
\begin{align*}
\sum_{\substack{ n\colon N(n)<N \\ \arg(n)\in\omega }} \Lambda(n)e(\langle n, \alpha\rangle) \ll  
\frac{N\log^2(N)}{N(q) ^{1/2}}  + N^{99/100}  .
\end{align*}
\end{theorem}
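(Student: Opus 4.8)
The plan is to reduce the sum over Gaussian primes in a sector to a genuinely two-dimensional exponential sum estimate, obtained by the standard Vinogradov/Vaughan machinery adapted to $\Z[i]$. First I would apply Vaughan's identity to the Gaussian von Mangoldt function: writing $\Lambda = \Lambda_{\le U} + (\text{type I}) + (\text{type II})$ with parameters $U, V$ to be chosen as small powers of $N$ (say $U = V = N^{1/10}$), the sum $\sum \Lambda(n) e(\langle n,\alpha\rangle)$ over $\{N(n) < N,\ \arg(n)\in\omega\}$ splits into a trivial piece of size $O(N^{1/10}\log N)$, a family of \emph{type I} sums $\sum_{d} a_d \sum_{m} e(\langle md, \alpha\rangle)$ where the inner sum runs over a dilated-rotated sector region $\{N(m) < N/N(d),\ \arg(m)\in\omega - \arg(d)\}$, and a family of \emph{type II} bilinear sums $\sum_{N(d)\sim D} \sum_{N(m) \sim M} a_d b_m e(\langle md,\alpha\rangle)$ with $DM \asymp N$ and $D$ in a dyadic range bounded away from $1$ and from $N$. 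The only genuinely new feature over the classical case is the sector constraint $\arg(n)\in\omega$, which turns inner sums into sums over sectors rather than intervals; but Lemma \ref{expsum} (with the $\frac{1}{\sqrt N}$ error) is exactly the tool that handles exponential sums over such sector regions, so the type I contribution is controlled by it, giving a bound of the shape $\frac{N\log N}{N(q)^{1/2}} + N^{99/100}$ after summing over $d$ and using the rational approximation $N(\alpha - a/q) < N(q)^{-2}$.

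For the type II sums, the key step is a Cauchy–Schwarz in the $m$ variable followed by expansion of the square, producing $\sum_{m_1, m_2} \sum_{d} e(\langle d(m_1 - m_2),\alpha\rangle)$ over the relevant ranges. The inner sum over $d$ is again an exponential sum over a dilated-rotated sector with a "frequency" $\alpha \cdot (m_1 - m_2)$ (in the $\langle\cdot,\cdot\rangle$ pairing), so Lemma \ref{expsum} applies once more, and one gets a saving unless $m_1 - m_2$ is such that $\alpha(m_1 - m_2)$ is close to a Gaussian integer. Counting the number of such "bad" pairs using the rational approximation — the number of $h\in\Z[i]$ with $N(h) \le H$ and $\|\alpha h\|$ small is controlled by a divisor-type / lattice-point argument exactly because $\alpha$ is near $a/q$ with $\gcd(a,q)=1$ — yields the bound $\frac{N}{N(q)^{1/2}} + N(q)^{1/2} + $ (minor terms), all times $\log^2 N$, which after optimizing the dyadic ranges absorbs into $\frac{N\log^2 N}{N(q)^{1/2}} + N^{99/100}$. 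The dictionary here is: replace "interval of length $X$" by "sector of norm-radius $X$", replace the one-dimensional Dirichlet/Farey counting by the two-dimensional lattice-point count $\sum_{n<N} r_2(n) \ll N$ from \eqref{sumofsqestimate}, and replace "$q$" by "$N(q)$" throughout.

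The main obstacle I anticipate is \textbf{the divisor/lattice-point bookkeeping in $\Z[i]$}: in dimension one, the estimate "$\#\{q\le Q : \|\alpha q\| < \delta\} \ll (1 + Q\delta)(1 + Q/q_0)$" for $\alpha$ near $a/q_0$ is a clean consequence of the three-distance phenomenon, but over $\Z[i]$ one must count Gaussian integers in a sector with $\|\alpha h\|$ small, and the naive bound loses a factor of $r_2$ (number of divisors / representations as sums of two squares), which is where the $N^\epsilon$ and the extra $\log$'s creep in. I would handle this by a dyadic decomposition of $N(h)$ and a second-moment (or just $L^1$) count using \eqref{sumofsqestimate} together with the coprimality $\gcd(a,q)=1$ to ensure the points $\{ah/q \bmod \Z[i]\}$ are spread out on the scale $1/\sqrt{N(q)}$; the crude exponent $99/100$ in the statement gives plenty of room to be wasteful here, so I would not try to optimize. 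A secondary technical point is keeping track of the argument-shift in the sector when passing to dilated-rotated regions $\arg(m) \in \omega - \arg(d)$ — but since Lemma \ref{expsum} is stated with an implied constant depending only on $\omega$ and we "do not attempt to track constants that depend upon $\omega$", and since $|\omega|$ is fixed, this causes no real difficulty beyond notation.
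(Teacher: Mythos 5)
Your proposal follows essentially the same route as the paper: Vaughan's identity over $\Z[i]$ splitting the sum into a trivial piece, type I sums, and bilinear type II sums; the sector exponential sum estimate of Lemma \ref{expsum} for the free inner sums; and a spacing/counting argument for the contributions with $\|\alpha h\|$ small (this is exactly the paper's Lemma \ref{cong}, proved via decomposition modulo $q$, the coprimality spacing of the points $ra/q$, and the $r_2$ average bound \eqref{sumofsqestimate}). Apart from cosmetic differences --- your parameter choice $U=V=N^{1/10}$ versus the paper's $U=N^{1/2}$, $V=N^{1/4}$, and which variable carries the Cauchy--Schwarz in the type II treatment --- this is the argument given in the paper.
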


The remainder of the section is devoted to the proof. We collect some background material. 
The ring $\Z[i]$ is an Euclidean domain, which means that for each $a,b\in \Z[i]$, there exists a unique pair of $q,r$ such that
$$a=bq+r \textup{ where } {r\in B_b}$$
Note that if $N(r) =\frac{N(q)}{2}$, then \textbf{}{$r$ is on the edges of square $\langle b,ib\rangle$}, we pick the point with positive phase {on the line that contains origin}, i.e. $0\leq \arg(r)<\pi$.

Similar to the Farey dissection, we can find an approximation of $\xi$ by rational points in $\mathbb{Q}[i]$. In fact, we can say that for every $N(\xi)<1$, there exists $\frac{a}{q}\in \mathbb{Q}[i]$ such that 
$$N(\xi - \tfrac{a}{q}) < \frac{c(q)}{N(q)^2}$$
where $c(q) \approx 1$ is a constant that depends on $q$ but is uniformly bounded above and below. 
Thus, the assumption of the Theorem says that $\alpha $ is well approximated by $a/q$, and we apply the result above 
with $N(q)$ large.

Next, we recall the Prime Number Theorem for Gaussian integers.  
For any choices of $\omega \subset \mathbb T $,  integers $A$, and $N(q)<\log^{\frac{A}{ {2}}-1}N$, 
\begin{align}
\psi _{\Lambda}  (x,q,r,\omega ) 
    & := \sum_{\substack{n \equiv r \mod q \\ N(n) < x \\ \arg(n)\in\omega }} \Lambda (n) = \frac{\lvert \omega\rvert }{2\pi}  \frac{x}{\phi(q)} +O\left( \frac{x \, N(q)}{\log ^A x}  \right)
\label{landaupnt2}
\end{align}
the implied constants are absolute. 
The last two equations come from \cite{MR2061214}*{Theorem 5.36 \& following hint}, which provides us with a uniform property of the Prime Number Theorem.

The principal technical tool is this Lemma. 
  
\begin{lemma}\label{cong}
 Let $N$ be a large integer and  $m\in \Z[i]$ with  $N(m)>N^{1/100}$ or $m=0$. 
For large $T$, and  $N(\alpha-\tfrac{a}{q})\leq \frac{1}{N(q)^2}$ where $0<N(a)<N(q)$ and $(a,q)=1$ 
\begin{align*}
B(T,N,\alpha,m)& :=\sum_{t\colon 0<N(t)<T}
\min\left(\frac{N}{N(t)},\left(\frac{N}{N(t)}\right)^{\frac{1}{4}}\frac{1}{N(\|(\bar{t}+m)\alpha\|)^{\frac{3}{4}}}\right) 
\\ & \ll 
\frac{N\log(T)}{N(q)}+N^{\frac{1}{4}} N (q)^{\frac 34}  \log N(q)+N(q)^{\frac{1}{4}}N^{\frac{1}{4}}T^{\frac{3}{4}}
+N^{99/100}.
\end{align*}
\end{lemma}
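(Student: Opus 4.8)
\textbf{Proof plan for Lemma \ref{cong}.}

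The plan is to follow the classical Vinogradov/Davenport treatment of sums of the form $\sum_{t} \min(N/N(t), \|(\bar t + m)\alpha\|^{-?})$, adapted to the two-dimensional Gaussian setting and to the slightly unusual exponent $\tfrac34$ appearing here. First I would dyadically decompose the range $0 < N(t) < T$ into blocks $N(t) \asymp S$ for $S$ a power of $2$ up to $T$. On each block the factor $N/N(t)$ is essentially constant, $\asymp N/S$, so the contribution of the block is $\ll \sum_{N(t) \asymp S} \min\bigl( N/S, (N/S)^{1/4} N(\|(\bar t + m)\alpha\|)^{-3/4}\bigr)$. The heart of the matter is therefore to control, for a fixed dyadic scale $S$, how often the shifted multiples $(\bar t + m)\alpha$ come close to a Gaussian integer as $t$ ranges over a box of $\asymp S$ lattice points.

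The second step is the counting lemma for near-lattice-points: using the rational approximation $N(\alpha - a/q) \le 1/N(q)^2$ with $(a,q)=1$, I would show that as $t$ runs over $N(t) \asymp S$, the quantities $(\bar t + m)\alpha$ equidistribute modulo $\Z[i]$ at scale $1/N(q)$, so that the number of $t$ with $N(\|(\bar t+m)\alpha\|) \le \delta$ is $\ll (1 + \delta N(q))(1 + S/N(q))$ — the usual ``divisor-like'' count, coming from writing $(\bar t + m)a/q$ and noting that $\bar t$ ranges over residues mod $q$ roughly $S/N(q)$ times. The shift by $m$ is harmless here since it is a fixed translation; the hypothesis $N(m) > N^{1/100}$ or $m = 0$ is what one needs so that the error terms generated by $m$ (in the step where $\bar t$ is replaced by a residue and the ``completion of the sum'' is performed) are absorbed into the $N^{99/100}$ term. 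Summing $\min(N/S, (N/S)^{1/4}\delta^{-3/4})$ against this count via the level-set / layer-cake decomposition over dyadic values of $\delta$ gives, for each $S$, a bound of the shape $\frac{N}{N(q)} + N(q)\,(N/S)^{1/4}\log N(q) \cdot (\text{stuff}) + (N/S)^{1/4} S N(q)^{1/4} \cdot \delta_{\min}$-type terms; balancing the threshold $\delta$ where the two branches of the $\min$ cross (namely $\delta \asymp N/S$... actually $(N/S)^{1/4}\delta^{-3/4} = N/S \iff \delta \asymp (S/N)$) is the computational core.

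The third step is to reassemble the dyadic blocks. The term $\frac{N}{N(q)}$ per block, summed over the $\ll \log T$ scales $S$, produces $\frac{N\log T}{N(q)}$. The block where $S$ is largest, $S \asymp T$, is where the factor $(N/S)^{1/4} S = N^{1/4} S^{3/4}$ is maximized, contributing the $N^{1/4} N(q)^{1/4} T^{3/4}$ term; the block where $S$ is smallest contributes the $N^{1/4} N(q)^{3/4}\log N(q)$ term (this is the ``diagonal'' $t \equiv $ fixed residue mod $q$ contribution, where $N(t)$ can be as small as $N(q)$ and the near-vanishing of $\|(\bar t + m)\alpha\|$ is genuine). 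All genuinely small or badly-behaved scales, together with the rounding errors from replacing sums over boxes by sums over residue classes, get swept into $N^{99/100}$ using $N(q) < \sqrt N$ and $N(m) > N^{1/100}$ (or $m=0$).

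The main obstacle I anticipate is getting the two-dimensional counting estimate for $\#\{t : N(t) \asymp S,\ N(\|(\bar t + m)\alpha\|) \le \delta\}$ with clean, $q$-uniform constants, since in $\Z[i]$ one cannot simply invoke the one-dimensional three-distance/continued-fraction machinery: one has to handle approximation of a complex number by Gaussian rationals, control the shape of the box $B_q$ (as set up in \S2 via the tessellation picture), and make sure the shift $m$ does not wreck the equidistribution. A secondary nuisance is bookkeeping the exponent $3/4$ correctly through the layer-cake sum so that the balance point produces exactly the stated exponents $N^{1/4} N(q)^{3/4}$ and $N^{1/4}N(q)^{1/4}T^{3/4}$ rather than something weaker; this is the step I would check most carefully.
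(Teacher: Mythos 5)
Your plan is essentially the paper's own argument: the paper implements your residue-class decomposition explicitly via Euclidean division $t+\bar m=\bar h\bar q+\bar r$, proves your ``equidistribution at scale $1/N(q)$'' as a separation/bounded-multiplicity claim for $r\mapsto \bigl\lVert \tfrac{ra}{q}+hq\beta+r\beta\bigr\rVert$ (so these values run over the points $d/q$, $d\in B_q$, with $O(1)$ multiplicity), and treats the degenerate term $\bar t+m=0$ exactly as you suggest, using $N(m)>N^{1/100}$ (or vacuously when $m=0$) to bound it by $N^{99/100}$. The one spot to execute carefully is the range $N(t)\lesssim N(q)$ (the paper's Case 2, $h=0$, small remainder, handled there by keeping the factor $N(t)^{-1/4}$ and applying H\"older): in your dyadic-in-$S$/layer-cake scheme you must also invoke the trivial count $O(S)$ per block, since the unrefined bound $(1+\delta N(q))(1+S/N(q))$ alone gives $N^{1/4}N(q)$ there rather than the stated $N^{1/4}N(q)^{3/4}\log N(q)$.
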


\begin{proof}
The proof proceeds by  case analysis based on the values of $h$ and $r$ 
introduced here. 
Since $\Z[i]$ is a Euclidean domain, we know that  $t+\bar{m}=\bar{h}\bar{q}+\bar{r}$, with $r\in B_q$. 
Let $\beta:=\alpha - \frac{a}{q}$. Then $N(\beta) <N(q)^{-2}$ and
$$\| (\bar{t} +m)\alpha\| =\bigl\lVert hq\beta+r\beta+\tfrac{ra}{q}\bigr\rVert. $$
So we can rewrite our sum in terms of $h$ and $r$ as follows. 
\begin{align*}
B(T,N,\alpha,m) = \sum_{N(h)<\frac{T}{N(q)}}\sum_{r\in B_q} \min\left(\frac{N}{N(t)},\left(\frac{N}{N(t)}\right)^{\frac{1}{4}}\frac{1}{N(\|(\bar{t}+m)\alpha\|)^{\frac{3}{4}}}\right),
\end{align*}
where on the right we understand that $t=t_{h,r}$ is given by 
\begin{equation}
    t+\bar{m}=\bar{h}\bar{q}+\bar{r}, 
\end{equation}

\smallskip 

\textbf{Case 1: $\bar{t}+ m=0$.} 
Hence $\bar{t}=-m$, where $m$ is fixed. 
There is only one term in the sum we are estimating.
Since $N(t)>0$, we see that $m\neq 0$, and so $N(t) > N ^{1/100}$. Then we use the trivial bound, which gives the contribution of at most $N^{99/100}$ and we see the inequality holds in the case that $\bar{t}+m=0$.

\smallskip 
\textbf{Case 2: $h=0$, and $0<N(r)<N(q)/10$.} 
By assumption $N(r\beta)\leq (2N(q)) ^{-1}$, 
so $4N(\| (\bar{t}+m)\alpha\|)\geq N(\|\frac{ra}{q}\|)-\frac{1}{2N(q)}$. 
Therefore
\begin{align}\label{e:interestcase1}
\sum_{0 < N(r) \leq N(q)/10 }
\left(\frac{N}{N(t)}\right)^{\frac{1}{4}}\frac{1}{N(\| (\bar{t}+m)\alpha\|)^{\frac{3}{4}}}
    &\ll \sum_{N(r)<N(q)}\frac{N^{\frac{1}{4}}}{N(r)^{\frac{1}{4}} \left( N(\|\frac{ra}{q}\|) - \frac{1}{4N(q)} \right)^{\frac{3}{4}}}
\end{align}
Denote $d_a(r) \equiv a r \pmod q$, so that 
  $N(\|\frac{ra}{q}\|) = \frac{ N(d_a(r))}{N(q)}$. 
  We  see that we can ignore $\frac{1}{4N(q)}$ in the denominator of the right hand side. So
\begin{align*}
\eqref{e:interestcase1}
    &\ll N^{\frac{1}{4}}\sum_{N(r)<N(q)}\frac{1}{N(r)^{\frac{1}{4}}N(\|\frac{ra}{q}\|)^{\frac{3}{4}}}
\end{align*}
Now,the map $r \to ra$ is a permutation on $B_q$, so that 
\begin{align*}
\eqref{e:interestcase1}
    &\ll N^{\frac{1}{4}}\sum_{N(r)<N(q)}\frac{1}{N(r)^{\frac{1}{4}}N(   \frac{d_a(r)}{q} )^{\frac{3}{4}}}
\\
& \ll N^{\frac{1}{4}} N (q)^{3/4} 
    \sum_{N(r)<N(q)}\frac{1}{N(r)^{\frac{1}{4}}N({d_a(r)\textbf{}})^{\frac{3}{4}}}
\\
& \ll N^{\frac{1}{4}} N (q)^{ 3/4}  
    \sum_{N(r)<N(q)}\frac{1}{N(r)}
\ll N^{\frac{1}{4}} N (q)^{\frac 34}  \log N(q).     
\end{align*}
Above, we have used the $\ell^{4}$---$\ell^{4/3}$ H\"older inequality, and 
\eqref{sumofsqestimate}.  That completes Case 2. 
\smallskip

\begin{figure}
    \centering

\begin{tikzpicture}[thick, scale=0.6]
    \draw[thick] (0,0) -- (15,0) -- (15,15) -- (0,15) -- cycle;
    
    \filldraw[black] (7,7) circle (1pt);
    \draw[blue,thick] (0,0) -- (7,7);
    \filldraw[black] (3,6.5) circle (1pt);
    \draw[blue,thick] (0,0) -- (3,6.5);
    \filldraw[black] (3.5,4) circle (1pt);
    \draw[blue,thick] (0,0) -- (3.5,4);
    \filldraw[black] (3.5,3) circle (1pt);
    \draw[blue,thick] (0,0) -- (3.5,3);
    \filldraw[black] (5.5,3) circle (1pt);
    \draw[blue,thick] (0,0) -- (5.5,3);
    \filldraw[black] (4.5,3) circle (1pt);
    \draw[blue,thick] (0,0) -- (4.5,3);
    
    \filldraw[black] (6.5,9) circle (1pt);
    \draw[blue,thick] (0,15) -- (6.5,9);
    \filldraw[black] (4.5,8) circle (1pt);
    \draw[blue,thick] (0,15) -- (4.5,8);
    \filldraw[black] (3.5,11.5) circle (1pt);
    \draw[blue,thick] (0,15) -- (3.5,11.5);
    \filldraw[black] (2,9) circle (1pt);
    \draw[blue,thick] (0,15) -- (2,9);
    \filldraw[black] (6,12) circle (1pt);
    \draw[blue,thick] (0,15) -- (6,12);
    
    \filldraw[black] (8,12) circle (1pt);
    \draw[blue,thick] (15,15) -- (8,12);
    \filldraw[black] (9,9) circle (1pt);
    \draw[blue,thick] (15,15) -- (9,9);
    \filldraw[black] (12,7) circle (1pt);
    \draw[blue,thick] (15,15) -- (12,7);
    \filldraw[black] (11,13) circle (1pt);
    \draw[blue,thick] (15,15) -- (11,13);
    
    \filldraw[black] (8.5,7) circle (1pt);
    \draw[blue,thick] (15,0) -- (8.5,7);
    \filldraw[black] (9,3) circle (1pt);
    \draw[blue,thick] (15,0) -- (9,3);
    \filldraw[black] (12,4) circle (1pt);
    \draw[blue,thick] (15,0) -- (12,4);
    \filldraw[black] (8,4) circle (1pt);
    \draw[blue,thick] (15,0) -- (8,4);
\end{tikzpicture}
\caption{The distances  $\lVert \frac{ra}{q}+hq\beta+r\beta\rVert$ for a fixed $h$ are essentially uniformly distributed in case 3.}
\label{f:unidist}
\end{figure}

\textbf{Case 3.   $h\neq 0$, or  $h=0$ and $N(r)>\frac{N(q)}{10}$.}  
This is the principal case. 
Observe  that $N(t)\gg N(q)(N(h)+1)$. If $h \neq 0$ we can immediately see that 
$$ N(t) \gg N(h) \, N(q) - N(r) \geq N(h) \, N(q) - \frac{N(q)}{2} \gg N(q) \, (N(h) + 1), $$ 
or otherwise $N(t) = N(r) \geq {N(q)}/{10}$.  Combining the two inequalities we see that 
$$ N(t) \gg N(q)(N(h)+1).$$ 

The key claim comes in two parts: First, there are constants $0<C_1 < C_2$ and $C_3>0$ so that   for all $q$ and  $h$ as above, and  for all $r\in B_q$ there is a $d = d(r, h) \in B_q$ so that first, 
\begin{equation} \label{e:permutation} 
     C_1 N(\tfrac{d}{q}) \leq\bigl\lVert  \tfrac{ra}{q}+hq\beta+r \beta\bigr\rVert ^2 \leq C_2 N(\tfrac{d}{q}).
\end{equation}
and second, for all $d$, the  cardinality of $\{ r \colon d(r,h)=d\}$ is  at most $C_3$.   
(In this sense,   $\| \frac{ra}{q}+hq\beta+r \beta\|$ runs over the set $\|\frac{d}{q}\|$, as illustrated in  Figure \ref{f:unidist}.) 

\begin{proof} 
The middle term in \eqref{e:permutation} is at most one. 
We take $d = d(r,h) \in B_q$ to minimize the distance between $d/q$ and  the fractional part of $ \frac{ra}{q}+hq\beta+r \beta$.  Then, the upper and lower  bounds in 
\eqref{e:permutation} are immediate.

The remainder of the argument 
concerns the second claim above.  But that follows, since 
we always have for  $r_1\neq r_2 \in B_q$
\begin{align}
    N\left(\left\|  \frac{r_1a}{q}+hq\beta+r_1\beta \right\| - \left\|  \frac{r_2a}{q}+hq\beta+r_2\beta \right\| \right) & \gg  N\left(\frac{(r_1-r_2)a}{q}+(r_1-r_2)\beta  \right)
    \\
    & = 
    N \left(\frac{r_1 -r_2}{q} \alpha\right) \gg \frac{1}{N(q)} . 
\end{align}

\end{proof}

We can now turn to the sum $B(T,N, \alpha) $ in this case.  
Using the trivial bound for the cases that $r=0$ and picking the nontrivial bound for the other cases, we obtain
\begin{align*}
B(T,N,\alpha)
    & \ll \sum _{N(h) < \frac{T}{N(q)} } \sum _{ N(r) < N(q)/2 } \min \left( \frac{N}{N(q) \, (N(h) +1)} \right. ,
    \\
    & \hspace*{5cm} \left. \frac{N^{1/4}}{N(q)^{1/4} \, (N(h)+1) ^{1/4} \, N(\| hq\beta + \frac{ra}{q} + r\beta  \|)^{3/4} }\right) 
    \\
    & \ll \sum_{N(h)<\frac{T}{N(q)}} \Bigg( \Bigg.  \frac{N}{N(q)\left(N(h)+1\right)} 
    \\
    & \hspace*{3cm} +\sum_{0<N(r)<N(q)/2}\frac{N^{\frac{1}{4}}}{N(q)^{\frac{1}{4}}(1+N(h))^{\frac{1}{4}}N(\| hq\beta+\frac{ra}{q}+r\beta\|)^{\frac{3}{4}}}\Bigg.\Bigg)
    \\
    &\ll \sum_{N(h)<\frac{T}{N(q)}} \left(\frac{N}{N(q)\left(N(h)+1\right)}+\sum_{0<N(d)<N(q)/2}\frac{N^{\frac{1}{4}}}{N(q)^{\frac{1}{4}}(1+N(h))^{\frac{1}{4}}N( \frac{d}{q})^{\frac{3}{4}}}\right)
    \\
    &\ll \sum_{N(h)<\frac{T}{N(q)}} \left(\frac{N}{N(q)\left(N(h)+1\right)}+\frac{\,  N^{\frac{1}{4}}N(q)^{\frac{1}{2}}}{(1+N(h))^{\frac{1}{4}}}\sum_{k=1}^{N(q)}\frac{r_2(k)}{k^{\frac{3}{4}}}\right)
    \\ 
    &\ll \sum_{N(h)<\frac{T}{N(q)}} \left(\frac{N}{N(q)\left(N(h)+1\right)}+\frac{\,  N^{\frac{1}{4}}N(q)^{\frac{3}{4}}}{(1+N(h))^{\frac{1}{4}}}\right)
    \\
    &\ll  \sum_{0<\ell<\frac{T}{N(q)}} r_2(\ell) \left( \frac{N}{N(q)\ell} + \frac{ N(q)^{\frac{3}{4}} N^{\frac{1}{4}}}{\ell^{\frac{1}{4}}  } \right)
    \\
    & \ll \frac{N\log(T)}{N(q)}+N(q)^{  \frac{1}{4}}N^{\frac{1}{4}}T^{\frac{3}{4}}
\end{align*}
where we have used the estimates in \eqref{sumofsqestimate}, and our proof is complete.
\end{proof}


Now are now ready to prove the main theorem of this section.

\begin{proof}[Proof of Theorem \ref{vingradovgaussian}]
Vaughan's identity, well-known in the one dimensional case,  holds in two dimensions as well. That is, let $|f|\leq 1$ be an arithmetic function  and fix $UV<N$. 
Then 
\begin{align}\label{vaughan}
\sum_{N(n)<N}f(n)\Lambda(n)\ll U+\log(N)A_1(N,U,V)+N^{\frac{1}{2}}\log^3(N)A_2(N,U,V)
\end{align}
where $A_1$ and $A_2$  are given by 
\begin{align}  \label{A1}
A_1=&\sum_{N(t)\leq UV}\max_{1\leq w\leq N} \biggl\lvert \sum_{w\leq N(r)\leq \frac{N}{N(t)}}f(rt)\biggr\rvert\\
\label{A2} 
A_2=&\max_{U\leq M\leq N/V}\max_{V\leq N(j)\leq N/M}\left(\sum_{V<N(k)\leq N/M}\biggl\lvert
 \sum_{M<N(m)<\min(2M,\frac{N}{N(j)},\frac{N}{N(k)})}f(mj)\overline{f(mk)}\biggr\rvert\right)^{\frac{1}{2}}.
\end{align}

The term we need to estimate is \eqref{vaughan}, with $f(x)=e(\langle x,\alpha\rangle)$.
The two auxiliary integers are  $U=N^{\frac{1}{2}}$ and $V=N^{\frac{1}{4}}$.   
The first term requires the exponential sum estimate \eqref{e:expsum}, and 
 Lemma \ref{cong} in which we set $T=UV$. It follows that 
\begin{align} \label{a1}
    A_1 
    & =\sum_{N(t)<UV}\max_{w<\frac{N}{N(t)}} \biggl\vert \sum_{\substack{w<N(r)<\frac{N}{N(t)} \\\arg(r)\in\omega }}e(\langle rt,\alpha\rangle)\biggr\vert
        \\
    & \ll\sum_{N(t)<UV} \min\biggl(\frac{N}{N(t)},\left(\frac{N}{N(t)}\right)^{\frac{1}{4}}\frac{1}{N(\|\bar{t}\alpha\|)^{\frac{3}{4}}}\biggr) 
     \\
    & \ll \frac{N\log(N)}{N(q)}  + N^{\frac{1}{4}} N(q)^{\frac{3}{4}} \log N(q) + N^{\frac{1}{4}}[UV]^{\frac{3}{4}}N(q)^{\frac{1}{4}}  +N^{99/100}
    \\
    & \ll \frac{N\log(N)}{N(q)}  \Bigl(1 + (N(q)/N)^{\frac 34}\log N(q) + \frac{N(q)^{\frac14}}{N^{\frac3{16}}} \Bigr) +N^{99/100}
    \\
    & \ll \frac{N\log^2(N)}{N(q)}  + N^{99/100}  
\end{align}
where in the last inequality we have used Lemma \ref{cong} with the hypothesis that  $m=0$.  
This bound meets the claimed bound in Theorem \ref{vingradovgaussian}.  

\smallskip 

The second term from Vaughan's identity \eqref{vaughan} is quadratic in nature. 
\begin{align*}
A_2 ^2 & = \max_{U<M<\frac{N}{V}}\max_{V\leq N(j)<N/M} \sum_{V<N(k)\leq N/M}\Biggl\vert
\sum_{\substack{M<N(m)<2M\\N(m)<N/N(k),N/N(j)  \\\arg(m)\in\omega}}e\left(mj-mk,\alpha\right)\Biggr\vert
\\
&\ll \max_{U<M<\frac{N}{V}}\max_{V\leq N(j)<N/M} \sum_{V<N(k)\leq N/M}\min\biggl(
M,\left(\frac{N}{N(k)}\right)^{\frac{1}{4}}N(\| (j-k)\alpha\|)^{-\frac{3}{4}}\biggr)
\end{align*}
where we have used Lemma \ref{expsum}. Now we use Lemma \ref{cong} for $m=j$. 
The conclusion of the Lemma applies since in the sum above, we have  
$
M \leq \frac{N}{N(k)} 
$. 
Therefore
\begin{align}\label{a2}
A_2&\ll \max_{U<M<\frac{N}{V}}\max_{V\leq N(j)<N/M} \left(M+\frac{N\log(N)}{N(q)}+\frac{N\, N(q)^{\frac{1}{4}}}{M^{3/4}}+\left(\frac{N}{M}\right)^{\frac{1}{4}}N(q) +N^{\frac{99}{100}}\right)^{\frac{1}{2}}\\
&\ll{\frac{N^{1/2}\log N}{N(q)^{1/2}}}+\sqrt{\frac{N}{V}} +  \frac{N^{\frac{1}{2}}N(q)^{\frac{1}{8}}}{ U^{3/8}} + \left(\frac{N}{U}\right)^{\frac{1}{8}}N(q)^{\frac{1}{2}} +N^{\frac{99}{200}} . 
\end{align}
The last inequality follow from our choice of $U=N^{\frac{1}{2}}$ and $V=N^{\frac{1}{4}}$, and completes the proof. 
\end{proof}

\bigskip 
\section{Approximating the Kernel}

\begin{figure}
    \centering
    \begin{tikzpicture}
       \draw[->, black] (-2,0) -- (5,0);
       \draw[->, black] (0,-0.5) -- (0,6);
       \draw[blue, very thick]  (0,0) -- (2,1) ;
       \draw[blue, very thick]  (0,0) -- (-1,2) ;
       \filldraw[blue] (0,0) circle (2pt);
       \draw[gray, very thick]  (2,1) -- (4,2) ;
       \draw[gray, very thick]  (2,1) -- (1,3) ;
       \filldraw[gray] (2,1) circle (2pt);
       \draw[dashed, gray, very thick]  (1,3) -- (3,4) -- (4,2) ;
       \draw[gray, very thick]  (-1,2) -- (-2,4) ;
       \draw[gray, very thick]  (-1,2) -- (1,3) ;
       \filldraw[gray] (-1,2) circle (2pt);
       \draw[dashed,gray, very thick]  (-2,4) -- (0,5) -- (1,3) ;
       \draw[black] (2,1) circle (2pt) node[anchor=west]{$q $};
       \draw[black] (-1,2) circle (2pt) node[anchor=east]{$iq $};
    \end{tikzpicture}
    \caption{\tiny \\ The decomposition of the complex field.}
    \label{img-decomposition}
\end{figure}

Define the approximating multiplier $L_N^{a,q}$ as follows:
\begin{align}  \label{e:LNaq}
    \widehat{L_N^{a,q}}(\xi) &:= \Phi(a,\bar q) \widehat{M_{N} ^{\omega }}(\xi - \frac{a}{q}). 
\end{align}
Above, we suppress the $\omega $ dependence in the already heavy notation, and we  use the notation 
\begin{align} 
\label{e:newgausssum}
    \Phi(a,q) &\coloneqq  \frac{\tau _{ q } (a)}{\phi(q)} ,
\end{align}
In addition, recall that $M_N = M_N ^{\omega }$ is an average over a sector defined by   a choice of interval $ \omega \subset \mathbb{T}$,
 see \eqref{e:MN}.
The weighted variant is 
\begin{equation}
 A_N ^{\omega } = A_N = 
 \frac{2\pi}  {\lvert\omega\rvert  N} \sum 
_{ \substack { N(n) < N \\\arg(n)\in\omega  }} \Lambda (n) \delta _n . 
 \end{equation}

\begin{lemma} \label{majorarcestimate}
For $ \omega \subset \mathbb T $,  $\alpha \in \mathbb{C} $ with  $N(\alpha) < 1 $, assume that there are $0 \leq N(a) < N(q) < Q $ such that $N ( \alpha - \frac{a}{q} ) < \frac{Q}{{N \cdot N(q) }} $ and  $a\in \mathbb{A}_q$). Then we have, for $A>1$,  and $N > N _{\omega ,A}$, 
\begin{equation}
\bigl\lvert \widehat{A_N ^{\omega }}(\alpha) - \widehat{ L_N^{a, q}}(\alpha ) \bigl\lvert \ll \frac{Q^4}{\log^A(N)}.
\end{equation}
\end{lemma}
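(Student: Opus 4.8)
The plan is to estimate $\widehat{A_N^\omega}(\alpha) - \widehat{L_N^{a,q}}(\alpha)$ by splitting the von Mangoldt sum according to residue classes modulo $q$, comparing each piece with the corresponding smooth average, and using the Vinogradov-type bound (Theorem \ref{vingradovgaussian}) and the Prime Number Theorem for Gaussian integers \eqref{landaupnt2} to control the errors.

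First I would write
\begin{align*}
\widehat{A_N^\omega}(\alpha) = \frac{2\pi}{|\omega| N} \sum_{\substack{N(n)<N \\ \arg(n)\in\omega}} \Lambda(n) e(-\langle n,\alpha\rangle),
\end{align*}
set $\beta = \alpha - \frac{a}{q}$, and group the sum over $n$ into residue classes $n \equiv r \pmod q$ with $r \in B_q$. On each class we have $e(-\langle n, a/q\rangle)$ roughly constant (equal to a fixed root of unity $e(-\langle r, a/q \rangle)$), so that $e(-\langle n,\alpha\rangle) = e(-\langle r, a/q\rangle) e(-\langle n, \beta\rangle)$. Classes with $(r,q)\neq 1$ contribute negligibly, since $\Lambda$ is supported off such classes except on prime powers of divisors of $q$ (a set of size $O(\sqrt N \log N \cdot N(q)^{o(1)})$, hence $O(N^{1/2+\epsilon}Q^{1+\epsilon})$ after the $1/N$ normalization — absorbable into $Q^4/\log^A N$ for $N$ large). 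For the coprime classes, the remaining sum $\sum_{n\equiv r, N(n)<N, \arg(n)\in\omega} \Lambda(n) e(-\langle n, \beta\rangle)$ should be compared to $\frac{|\omega|}{2\pi}\frac{1}{\phi(q)}\sum_{N(n)<N,\arg(n)\in\omega} e(-\langle n,\beta\rangle)$; the difference is handled by partial summation against the PNT error in \eqref{landaupnt2}, which is admissible precisely because $N(q) < Q \leq (\log N)^B$ and we may take $A$ in \eqref{landaupnt2} large relative to the target exponent. Summing over $r \in \A_q$ and recognizing $\sum_{r\in\A_q} e(-\langle r, a/q\rangle) = \tau_q(a) $ (conjugation conventions aside, by \eqref{def-generalizedareagausssum1}) reconstructs $\Phi(a,\bar q)\widehat{M_N^\omega}(\beta) = \widehat{L_N^{a,q}}(\alpha)$.

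The case split mirrors the standard major/minor dichotomy inside the "major arc": if $N(q)$ is at least some fixed power of $\log N$, one abandons the PNT route and instead invokes the Vinogradov inequality Theorem \ref{vingradovgaussian} directly to bound $\widehat{A_N^\omega}(\alpha)$ by $O(\log^2 N / N(q)^{1/2}) + O(N^{-1/100})$, which is already $\ll Q^4/\log^A N$ when $N(q)$ is a large enough power of $\log N$; simultaneously $\widehat{L_N^{a,q}}(\alpha)$ is small because $|\Phi(a,\bar q)| \ll N(q)^{-1/2+\epsilon}$ by Corollary \ref{crlr:multiplicative} (using $(a,q)=1$, so $\tau_q(a)=\mu(q)$ and $|\mu(q)|/\phi(q) \ll N(q)^{-1+\epsilon}$), combined with $|\widehat{M_N^\omega}(\beta)| \ll 1$. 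So there is a threshold $N(q) \asymp (\log N)^{c}$ separating the "genuinely major" regime (PNT) from the "transitional" regime (Vinogradov). I must check the Vinogradov hypotheses are met — in particular that $\beta = \alpha - a/q$ satisfies $N(\beta) < N(q)^{-2}$, which follows from $N(\beta) < Q/(N\cdot N(q))$ together with $N(q) < Q < \sqrt N$ for $N$ large — and that $N(q) < \sqrt N$, again automatic.

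The main obstacle is the major-arc regime itself: the paper explicitly flags that the usual Abel-summation approach is "poorly adapted" here and that a different method is needed. The difficulty is that the sector constraint $\arg(n) \in \omega$ does not interact cleanly with the residue-class decomposition — one cannot simply factor the sector cutoff through the reduction mod $q$. I expect the resolution to involve comparing $\widehat{A_N^\omega}$ to $\widehat{M_N^\omega}$ \emph{directly} via the uniform PNT \eqref{landaupnt2} applied with the sector built in (which is why \eqref{landaupnt2} is stated with $\omega$), rather than first summing over the sector and then over residues. Concretely, the plan is to express both $\widehat{A_N^\omega}(\alpha)$ and the target as weighted sums of $\psi_\Lambda(\cdot, q, r, \omega)$ against the oscillatory factor $e(-\langle n,\beta\rangle)$, replace $\psi_\Lambda$ by its main term $\frac{|\omega|}{2\pi}\frac{x}{\phi(q)}$ with the stated error, and observe that the main terms assemble exactly into $\Phi(a,\bar q)\widehat{M_N^\omega}(\beta)$ while the errors, being $O(N \cdot N(q)/\log^A N)$ before normalization and $O(N(q)/\log^A N)$ after, telescope to something $\ll Q^4/\log^A N$ after summing the $N(q) = O(Q^2)$ residues (the crude $Q^4$ on the right-hand side leaves ample room, which is presumably why the authors did not optimize it). Keeping track of which quantities carry $q$ versus $\bar q$ and ensuring the Gauss-sum normalization $\Phi(a,\bar q)$ (not $\Phi(a,q)$) emerges correctly will require care but no new ideas.
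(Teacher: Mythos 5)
Your skeleton is right (split into residue classes mod $q$, discard the non-coprime classes, recognize $\sum_{r\in\A_q}e(\langle r,a/q\rangle)=\tau_q(a)$, and invoke the uniform sector PNT \eqref{landaupnt2}; the Vinogradov branch for larger $N(q)$ is harmless but unnecessary, since in the nontrivial range $Q^4\le\log^AN$ every modulus $N(q)<Q$ is poly-logarithmic and \eqref{landaupnt2} with a large exponent covers all of them). The gap is at exactly the point you flag and then wave through: the major-arc comparison of the $\Lambda$-weighted class sums with their unweighted counterparts. Your concrete plan --- ``express both as weighted sums of $\psi_\Lambda(\cdot,q,r,\omega)$ against the oscillatory factor $e(-\langle n,\beta\rangle)$, replace $\psi_\Lambda$ by its main term, and let the errors telescope'' --- is radial partial summation, i.e.\ precisely the Abel-summation move the paper rejects, and it does not work as stated: $e(-\langle n,\beta\rangle)$ is not a function of $N(n)$ alone, and its total variation over the region $\{N(n)<N,\ \arg n\in\omega\}$ is of order $\sqrt{N\cdot N(\beta)}\ \lesssim (Q/N(q))^{1/2}$, which need not be $o(1)$. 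So you cannot telescope against the counting function $\psi_\Lambda(x,q,r,\omega)$ with the sector $\omega$ held fixed; the phase oscillates in the angular variable at a scale the radial counting function cannot see.

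What the paper actually does at this step is the missing mechanism: it partitions the sector-annulus into boxes $R(j,P)$ with angular width $\approx(\log N)^{-10A}$ and radial ratio $\rho=1+(\log N)^{-10A}$, so that $e(\langle n,\beta\rangle)$ is essentially constant on each box; it then applies \eqref{landaupnt2} (with exponent $10A$, much larger than the target $A$, to beat the $(\log N)^{O(A)}$ count of boxes) to each $R(j,P)$, realized as a difference of four sector counts, to replace $\Lambda$ by its mean $q/\phi(q)$ box by box. A second step you also omit is then needed to produce the main term: the unweighted class sums $B_N'(r,\beta)$ still depend on $r$, and one must show $B_N'(r,\beta)-B_N'(s,\beta)\ll \lvert r-s\rvert\,\lvert\beta\rvert\ll Q(Q/N)^{1/2}$ before averaging over $r\in\A_{\bar q}$, which is what turns the sum into $\Phi(a,\bar q)\,\widehat{M_N^{\omega}}(\beta)$. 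Your proposal asserts the outcome (``the main terms assemble exactly'') but supplies neither the fine annular-box decomposition nor the class-to-class comparison, and these are the content of the lemma rather than routine bookkeeping.
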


The usual one dimensional approach to these estimates uses the  Prime Number Theorem, and Abel summation. 
Implementing that argument in the two dimensional case engages a number of complications.  
After all, the two dimensional Prime Number Theorem is adapted to annular sectors, whereas Abel summation is most 
powerful on rectangles in the plane.  We avoid these technicalities below. 
(Mitsui \cite{MR136590} sums over rectangular regions.)

\begin{proof}
The quantifications of the Prime Number Theorem are decisive. 
 Write 
\begin{align*}
 \widehat{A_N ^{\omega }}(\alpha) &= \frac{2\pi}  {\lvert\omega\rvert  N}  \sum_{\substack{N(n) < N \\ \arg(n)\in\omega } } \Lambda_{\Z[i]}(n) e(\left\langle n, \alpha \right\rangle)
 \\ & =
 \frac{2\pi}  {\lvert\omega\rvert  N} 
 \sum_{\substack{r \in B_{\bar q}  \\ (r,\bar{q})=1}} \; \sum_{\substack {n \equiv r \mod \bar{q} \\ N(n) < N  \\ \arg(n)\in\omega }}
  \Lambda(n) e(\left\langle n, \beta + a/q \right\rangle) 
 \\ & =
 \frac{2\pi}  {\lvert\omega\rvert  N} 
 \sum_{\substack{r \in B_{\bar q} \\ (r,\bar{q})=1}} e( \left\langle r,  a/q \right\rangle) 
 \sum_{\substack {n \equiv r \mod \bar{q} \\ N(n) < N \\ \arg(n)\in\omega }} \Lambda(n) e(\left\langle n, \beta  \right\rangle) 
 \\
 & = \frac 1 {\phi (q)}
 \sum_{\substack{r \in B_{\bar q} \\ (r,\bar{q})=1}} e( \left\langle r,  a/q \right\rangle)  
 B_N (r,\beta ), 
\end{align*}
where we define $B_N (r,\beta )$, and a closely related quantity by 
\begin{align}
B_N (r,\beta ) &\coloneqq 
\frac {2 \pi\phi (q) }{\lvert \omega  \rvert N} 
\sum_{\substack {n \equiv r \mod  \bar{q} \\ N(n) < N \\ \arg(n)\in\omega }} \Lambda(n) e(\left\langle n, \beta  \right\rangle) , 
\\
B'_{N} (r, \beta ) 
& \coloneqq 
\frac {2 \pi\phi (q) }{\lvert \omega  \rvert N} 
\sum_{\substack {n \equiv r \mod  \bar{q} \\ N(n) < N \\ \arg(n)\in\omega  }} e(\left\langle n, \beta  \right\rangle).
\end{align}

\begin{figure}
    \begin{tikzpicture}
        \draw[->, gray] (-4.5,0) -- (4.5,0);
        \draw[->, gray] (0,-4.5) -- (0,4.5);
        \draw[line width = 0.42cm, opacity=0.5, blue] (30:1.82cm) arc (30:48:1.82cm);
        \draw[dashed, line width = 0.3pt] (30:4.2cm) arc (30:48:4.2cm)
            node[anchor=west] {$ \quad {(\log N)^{-10A}} $};
        \draw[black,thick] (0,0) circle (1.6cm);
        \draw[black,thick] (0,0) circle (2cm);
        \draw[blue,thick] (0,0) circle (3.1cm);
        \draw[-, black,thick] (0,0) -- (30:4.2cm);
        \draw[-, black,thick] (0,0) -- (48:4.2cm);
        \filldraw[black] (1.6,0) circle (1pt) 
            node[anchor=north east] {$ N_j \; $};
        \filldraw[black] (2,0) circle (1pt) 
            node[anchor=north west] {$ N_{j+1} = \rho N_j $};
    \end{tikzpicture}
    \caption{\\\tiny figure description }
    \label{f:annular}
\end{figure}

Compare $B_{N,r}$ to $B'_{N,r}$, as follows.  Using the trivial estimate for 
$N(n) \leq  \sqrt N $
\begin{align}\label{e:differenceprimenonprime}
B _{N,r} (\beta ) -  B _{N}' (r,\beta ) 
\ll & 
N ^{-1/2} + 
\frac {2 \pi\phi (q) }{\lvert \omega  \rvert N} 
\sum _{\substack{ n \colon  \sqrt N < N(n) < N \\ n \equiv r \mod  \bar{q} \\ \arg(n)\in\omega }} 
\bigl( \Lambda(n) - \tfrac q {{\pi}\phi (q)}  \bigr) e(\langle n, \beta  \rangle). 
\end{align}
We continue with the last  sum above.  It is divided into annular rectangles, as follows.  
Let $\mathcal P$ be a partition of the arc $[0, \omega ] \subset \mathbb{T}$ into 
intervals of length approximately $(\log N)^{-10A}$.
Set $\rho = 1+ (\log N)^{-10A}$.   
For integers $j$ with 
\begin{equation}
N^{1/{2}} \leq N_j = \rho ^j {\sqrt{N}}< N , 
\end{equation}
and an interval $P\in \mathcal P$, set 
\begin{equation}
R (j, P) = \{ n \colon   N_j \leq N(n) < N _{j+1},\  \textup{arg}(n) \in P,\  n \equiv r \mod \bar q \}.  
\end{equation}
See Figure \ref{f:annular}. 
The set $R(j,P)$ is the symmetric difference of four sets to which the prime counting function estimate 
\eqref{landaupnt2} applies. 
From it, we see that 
\begin{align}\label{e:PNTHecke}
D(j,P)& =\sum _{n\in R(j,P) } 
\bigl( \Lambda(n) - \tfrac q {\phi (q)}  \bigr)  e(\langle n, \beta  \rangle)
\\
&\leq \sup_{n,m\in R(j,P)} \lvert 1 - e(\langle n -m, \beta  \rangle)  \rvert 
\sum _{n\in R(j,P) } \Lambda(n) + \tfrac q {\phi (q)}
\\
& \qquad 
+ \Bigl\lvert \sum _{n\in R(j,P) } \Lambda(n) - \tfrac q {\phi (q)} \Bigr\rvert 
\\
& \ll \Bigl[ \frac QN \cdot \frac N {\log^{10} N} \Bigr] ^{1/2} \lvert R(j,P) \rvert 
+ \frac{N_{j+1} - N_j } {(\log N)^{10A}}   
\\
&\ll  \sqrt Q  \frac{N_{j+1} - N_j } {(\log N)^{10A}}. 
\end{align}
The bound for the first term comes from the condition that $ N(\beta ) \leq \frac Q N$, and 
for the second from \eqref{landaupnt2}.

Control of the absolute value of the $D(j,P)$ is sufficient, since 
\begin{align}
   B _{N,r} (\beta ) -  B _{N,r}' (\beta ) 
& \ll 
N ^{-1/2} 
+ \frac{\phi(q)}N \sum_{ P \in \mathcal{P}} \sum_{ j \colon \rho^j \leq\sqrt N}  
\lvert D(j,P)\rvert  
\\
& \ll  N ^{-1/2} 
+ \frac{\phi(q)}N 
\sum_{ P \in \mathcal{P}} \sum_{ j \colon \rho^j \leq\sqrt N}  
 \frac{N_{j+1} - N_j } {(\log N)^{10A}}
 \\
 & \ll \frac{Q (\log N)^{A}} { (\log N)^{10A}} 
\end{align}
as there are only $\ll(\log N)^A$ choices of the interval $P$. 
We are free to choose $A$ as large as we want.  

This holds for all $r \in B _{\bar q}$, so that we have 
\begin{equation}
\widehat {A_N ^{\omega } } (\alpha ) - 
\frac 1 {\phi (q)}
 \sum_{\substack{r \in B_{\bar q} \\ (r,\bar{q})=1}} e( \left\langle r,  a/q \right\rangle)  
 B_N' (r,\beta )  
 \ll      \frac{Q} {(\log N)^{A}}. 
\end{equation}
Then, observe the elementary inequality that for $r, s \in B _{\bar q}$, 
\begin{align}
B_N' (r,\beta )  - B_N' (s,\beta ) & \ll \lvert r-s\rvert \cdot \lvert \beta\rvert  
\ll Q \Bigl[ \frac {Q}{  N} \Bigr] ^{1/2}   
\end{align}
which just depends upon the Lipschitz bound on exponentials, and the upper bound on $\beta $.

The conclusion of the argument is then clear. Up to an error term of magnitude $ Q ^{3/2}(\log N)^{A}$ we can write 
\begin{align}
\widehat {A_N ^{\omega } }(\alpha ) &=  
\frac 1 {\phi (q)}
 \sum_{\substack{r \in B_{\bar q} \\ (r,\bar{q})=1}} e( \left\langle r,  a/q \right\rangle)  
 B_N' (0,\beta )  
\\ &= \Phi (a, \bar q)B_N' (0,\beta )   
 \\
& =\Phi (a, \bar q)  \frac 1 {\lvert B _{\bar q} \rvert} \sum_{r\in B _{\bar q}} B_N' (r,\beta )    
\\
& = \Phi (a, \bar q) \widehat {M _N ^{\omega } } (\beta ). 
\end{align}
That is the conclusion of  Lemma \ref{majorarcestimate}. 
\end{proof}

Consider the following dyadic decomposition of rationals
\begin{equation}
\mathcal{R}_s = \left\lbrace  \frac{a}{q}\, : \, 2^s \leq N(q) < 2^{s+1}, \, a \in \A_q \right\rbrace.
\end{equation}
Let $\Delta $ be be a continuous  function on $\C$, 
a tensor product of piecewise linear functions, with 
\begin{equation}\label{eq-etadefinition}
\eta(\xi) = \begin{cases} 1 &\mbox{if }  \xi =(0,0)
\\
0 & \mbox{if } N(\xi) \geq\lVert \xi \rVert_\infty \geq 1 \end{cases}
\end{equation}
and let $\Delta _s(\xi) : = \eta(16^s \xi) $.   
Here, we remark that this definition is different from many related papers 
in the literature. With this defintion, the function $\check \eta $ is a tensor product F\'ejer kernels. In particular, they are 
non-negative averages.  Imposing this choice here will simplify considerations in the analysis of the Goldbach conjectures.

Recalling definitions of $L_N^{a, q}$ in \eqref{e:LNaq}, further define 
\begin{equation*}
\widehat{B ^{\omega }_N }(\xi):=  \sum_{s \geq  0}  \sum_{a/q \in \mathcal{R}_s }  \widehat{ L_N^{a, q}} \left( \xi \right) \Delta _s  \left( \xi - \frac{a}{q} \right).
\end{equation*}
We remind the reader that the $ \omega $ dependence is suppressed in the notation on the right.


\begin{theorem}\label{theorem:kernel_approximation}
Fix an integer $A >10$ and $\omega \subset \mathbb T $. 
 Then, there is an $N _{\omega }$ to that  for all $N > N _{\omega }$, 
\begin{equation}
\label{e:kernel_approx}
 \| \widehat{A ^{\omega }_N} -\widehat{B ^{\omega }_N } \|_\infty \ll (\log N)^{-A} . 
\end{equation}
The implied constant is independent of $\omega $. 

\end{theorem}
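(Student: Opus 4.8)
The plan is to run a standard Hardy–Littlewood circle method split into major and minor arcs, using Theorem~\ref{vingradovgaussian} (the Vinogradov estimate) off the major arcs and Lemma~\ref{majorarcestimate} on them. Fix $\xi \in \C$ with $N(\xi)<1$, and apply the two‑dimensional Dirichlet/Farey approximation recalled after Theorem~\ref{vingradovgaussian}: there is a rational $a/q \in \Q[i]$ with $(a,q)=1$, $a\in\A_q$, and $N(\xi - a/q) < c(q)/N(q)^2$. Let $Q_0 = (\log N)^{C}$ for a suitable $C = C(A)$ (say $C = 10A$), and declare $\xi$ to be \emph{major} if we may take $N(q) < Q_0$ \emph{and} $N(\xi - a/q) < Q_0/(N\cdot N(q))$; otherwise $\xi$ is \emph{minor}.

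\emph{Minor arcs.} If $\xi$ is minor, then in its Dirichlet approximation $a/q$ we necessarily have $N(q)$ in the range $Q_0 \le N(q) \le N/N(q)$, i.e.\ $Q_0 \le N(q) < \sqrt N$ (using $N(\xi - a/q) < N(q)^{-2}$ together with failure of the major‑arc inequality forces $N(q) > Q_0$, and the hypothesis $N(q) < \sqrt N$ is what Theorem~\ref{vingradovgaussian} needs — when $N(q)\ge \sqrt N$ one passes to a sub‑approximation of denominator $< \sqrt N$ in the usual way). Theorem~\ref{vingradovgaussian} then gives
\begin{equation*}
\bigl| \widehat{A_N^\omega}(\xi) \bigr| \ll \frac{\log^2 N}{N(q)^{1/2}} + N^{-1/100} \ll \frac{\log^2 N}{Q_0^{1/2}} \ll (\log N)^{-A}
\end{equation*}
once $C$ is large enough. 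On the other side, $\widehat{B_N^\omega}(\xi)$ is a sum over $a/q\in\mathcal R_s$ of $\widehat{L_N^{a,q}}(\xi)\Delta_s(\xi - a/q)$; the cutoff $\Delta_s$ vanishes unless $\|\xi - a/q\|_\infty < 16^{-s}$, which for $\xi$ minor and $N(q)\sim 2^s$ forces $2^s \ll Q_0$ (a short computation comparing $16^{-s}$ with the separation of distinct rationals of denominator $\le Q_0$ shows that only boundedly many, in fact at most one, such terms survive, and each survivor would have $N(q)\le Q_0$, contradicting minorness for large $N$). So $\widehat{B_N^\omega}(\xi)=0$ on the minor arcs and the difference is $\ll (\log N)^{-A}$ there.

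\emph{Major arcs.} If $\xi$ is major, with $a/q$ as above satisfying $N(q) < Q_0$ and $N(\xi-a/q) < Q_0/(N\cdot N(q))$, then Lemma~\ref{majorarcestimate} (applied with $Q = Q_0$, and with a slightly larger exponent $A' = A + $ absolute constant to absorb the $Q_0^4$) gives
\begin{equation*}
\bigl| \widehat{A_N^\omega}(\xi) - \widehat{L_N^{a,q}}(\xi)\bigr| \ll \frac{Q_0^4}{\log^{A'} N} \ll (\log N)^{-A}.
\end{equation*}
It remains to check that $\widehat{B_N^\omega}(\xi)$ equals $\widehat{L_N^{a,q}}(\xi)$ up to $(\log N)^{-A}$, i.e.\ that exactly the single term $a/q$ (with $s$ determined by $2^s \le N(q) < 2^{s+1}$) contributes, with $\Delta_s(\xi - a/q) = 1$. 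For the surviving term: since $N(\xi - a/q) < Q_0/(N\cdot N(q)) \le Q_0^2/N \ll 16^{-s}$ for $N$ large (as $16^{-s} \gg Q_0^{-4}$ while $Q_0^2/N$ is much smaller), and since $\|\xi-a/q\|_\infty$ is even smaller than $\sqrt{N(\xi-a/q)}$ near the diagonal, the argument $16^s(\xi - a/q)$ lies in the region where $\eta \equiv 1$, so $\Delta_s(\xi-a/q)=1$. For every other rational $a'/q' \in \bigcup_s \mathcal R_s$: either $N(q') \ge Q_0$, in which case for the $\Delta_{s'}$‑term to be nonzero we would need $\|\xi - a'/q'\|_\infty < 16^{-s'}$, and one checks $N(\xi - a'/q') \ge N(q')^{-2} - N(\xi-a/q) - \ldots$ is too large (the genuinely distinct rationals are separated by $\gg (N(q)N(q'))^{-1}$, which exceeds $16^{-s'}$ once $N(q')\gg Q_0$ and $N$ is large); or $N(q') < Q_0$ but $a'/q'\ne a/q$, in which case $\|a/q - a'/q'\| \gg Q_0^{-2} \gg 16^{-s'}$ (recall $2^{s'} < Q_0$, so $16^{-s'} > Q_0^{-4}$... this forces a careful choice: take $C$ with $16^{-s'} \le Q_0^{-2}/2$ whenever $2^{s'} \ge \text{const}$; this holds by choosing the exponent in $\Delta_s(\xi)=\eta(16^s\xi)$ against $Q_0$, or more simply by noting distinct rationals of denominator norm $< Q_0$ are $\gg Q_0^{-2}$ apart while the support of $\Delta_{s'}(\cdot - a'/q')$ has radius $16^{-s'}$ with $2^{s'}<Q_0$ — here we use that the paper's convention makes $\eta$ supported where $N(\xi)<1$ after the $16^s$ scaling, so radius $\le 16^{-s'} \le N(q')^{-2}$ since $16^{-s'}\le (2^{s'})^{-2}\cdot 2^{-2s'}\cdot 16^{s'} $...). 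In all cases only $a/q$ contributes, and we conclude $|\widehat{B_N^\omega}(\xi) - \widehat{L_N^{a,q}}(\xi)| \ll (\log N)^{-A}$, completing the major‑arc estimate.

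\emph{Main obstacle.} The genuinely delicate point is the bookkeeping on the major arcs: showing that the sum defining $\widehat{B_N^\omega}$ collapses to a single term with $\Delta_s = 1$ on it. This requires reconciling three length scales — the support radius $16^{-s}$ of the cutoff $\Delta_s$ against a rational of denominator norm $\sim 2^s$, the minimal separation $\gg (N(q)N(q'))^{-1}$ between distinct Gaussian rationals, and the major‑arc width $Q_0/(N\cdot N(q))$ — and checking they are compatible for the chosen $Q_0 = (\log N)^C$ with an honest choice of $C = C(A)$. The exponent $16^s$ (rather than $2^s$) in the definition of $\Delta_s$ is exactly what buys the room here, and the hardest part is verifying that no ``second'' rational sneaks into the support; everything else is a direct citation of Theorem~\ref{vingradovgaussian} and Lemma~\ref{majorarcestimate}. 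A clean way to organize the separation argument is: for $a/q \ne a'/q'$ with $N(q),N(q') < Q_0$, $\|a/q - a'/q'\| = \|aq' - a'q\|/\|qq'\| \ge 1/N(qq')^{1/2} \ge 1/Q_0$, whereas the $\Delta_{s'}$ bump around $a'/q'$ has radius at most $16^{-s'} < (2^{s'})^{-3} \le N(q')^{-3/2} \cdot(\ldots)$; choosing $C$ so that $16^{-s'} < 1/(2Q_0)$ whenever $2^{s'} < 2Q_0$ — which holds automatically since $16^{-s'} \le 2^{-4s'}$ and $2^{s'} < 2Q_0$ gives $16^{-s'} < (2Q_0)^{-4} < (2Q_0)^{-1}$ — kills every competitor, and the same $2^{-4s'}$ gain defeats the higher‑denominator competitors using $N(q')^{-2}$‑spacing.
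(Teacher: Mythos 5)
Your overall strategy (Dirichlet/Farey approximation, Vinogradov off the major arcs, Lemma \ref{majorarcestimate} on them, separation of rationals to isolate the central term) is the same as the paper's, but two steps as written are genuinely wrong. First, your minor-arc dichotomy does not cover all frequencies: failure of the condition ``$N(q)<Q_0$ and $N(\xi-a/q)<Q_0/(N\cdot N(q))$'' does \emph{not} force $N(q)\ge Q_0$ for the Farey approximant. A point such as $\xi=a/q+\beta$ with $N(q)$ bounded and $N(\beta)\sim N^{-1/2}$ satisfies $N(\xi-a/q)<1/N(q)^2$ yet is ``minor'' in your sense, and for it you have no estimate at all: Theorem \ref{vingradovgaussian} with this small $N(q)$ gives only the trivial bound $N\log^2 N/N(q)^{1/2}$, and Lemma \ref{majorarcestimate} is inapplicable because its hypothesis $N(\xi-a/q)<Q/(N\cdot N(q))$ fails for any $Q$ with a useful error term $Q^4(\log N)^{-A'}$. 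Closing this requires an extra step your proposal omits (re-approximate such $\xi$ by Dirichlet with parameter tied to $N$ and show, via the spacing of small-denominator rationals, that the new denominator must exceed $Q_0$ before invoking Vinogradov); the paper sidesteps it by taking from the start an approximation of quality $N(\xi-a/q)\le 1/(N(q)N^{1/4})$ with $N(q)\le N^{1/4}$ and splitting only on the size of $N(q)$.

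Second, your claim that $\widehat{B_N^\omega}(\xi)$ vanishes identically on minor arcs, and that on major arcs ``only $a/q$ contributes,'' is false. The sum defining $\widehat{B_N^\omega}$ runs over \emph{all} $s\ge 0$, so denominators far larger than $Q_0$ (indeed far larger than $N$) occur; if, say, $\xi$ is itself a rational $a'/q'$ with $N(q')$ huge, then $\Delta_{s'}(\xi-a'/q')=1$ and that term is nonzero. Your separation bound $\|a/q-a'/q'\|\gg (N(q)N(q'))^{-1/2}$ stops beating the major-arc radius $\sqrt{Q_0/N}$ once $N(q')\gtrsim N/Q_0^2$, so the ``no second rational sneaks in'' argument cannot be pushed to all scales; the surviving high-denominator terms must instead be bounded using the decay of the Gauss sums, $\lvert\Phi(a',\bar q')\rvert\ll N(q')^{-1+\epsilon}$, summed over dyadic $s$ exactly as in \eqref{e:larges} — an ingredient absent from your proposal. (A smaller slip of the same kind: with the tent function $\eta$ of \eqref{eq-etadefinition} one has $\Delta_s(\xi-a/q)=1$ only at $\xi=a/q$, so on the major arc you must bound $\lvert\widehat{L_N^{a,q}}(\xi)\rvert\,\lvert 1-\Delta_s(\xi-a/q)\rvert$ by a Lipschitz/decay estimate rather than assert $\Delta_s\equiv 1$.) With the Gauss-sum tail estimate added and the intermediate-distance frequencies handled, your argument would align with the paper's proof; as written, both points are real gaps.
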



\begin{proof}
A useful and familiar fact we will reference below is that for each $s$, the functions  below are disjointly supported. 
\begin{equation} \label{e:disjoint}
 \widehat{ L_N^{a, q}} \Bigl( \xi \Bigr) \Delta_s  \Bigl( \xi - \frac{a}{q} \Bigr),  
 \qquad a/q\in  \mathcal R_s
\end{equation}
Fix $\xi \in \mathbb T^2$.  
By the higher-dimensional 
Dirichlet's Theorem there are relatively prime numbers $a$ and $ q$ that satisfy $1 \leq N(a) \leq N(q) \leq N^{1/4} $ such that
\begin{equation*}
N \bigl( \xi - \tfrac{a}{q} \bigr) \leq \frac{1}{N(q) \cdot N^{1/4}}.
\end{equation*}
To prove the theorem we need to consider two cases based on the value of $N(q)$.  

\vspace{1em} 

\noindent

\textbf{Case 1: Suppose $1 \leq N(q) \leq (\log N)^{4A} $.} 
For $ \frac{a'}{q'}  \neq \frac aq $ and $N(q') \leq (\log N)^{2A}$, we have 
\begin{align*}
N \left( \xi - \frac{a'}{q'}   \right) 
    & \gg N \left( \frac{a'}{q'} - \frac{a}{q}  \right) - N \left( \xi -  \frac{a}{q}  \right) 
    \\
    & \geq \frac{1}{N(q')N(q)} - \frac{1}{N(q)} \frac{1}{N^{1/4}} \gg (\log N)^{-6A} . 
\end{align*}
Using this, and the decay estimate for $\widehat M_N$ in \eqref{e:expsum} to see that 
\begin{equation*}
\left|   \widehat{ L_N^{a', q'}} ( \xi  ) \Delta_s  \left( \xi - \frac{a'}{q'} \right) \right|  \ll \frac{1}{\sqrt{N(q')}} \left( N N \left( \xi - \frac{a'}{q'} \right) \right)^{-3/4} \ll N^{-1/2}.
\end{equation*}
Appeal to the disjointness property \eqref{e:disjoint}.  We have 
\begin{equation*}
\Biggl\lvert
  \sum_{s \colon 2^s \leq  (\log N)^{2A}}  \sum_{ \frac{a'}{q'} \in \mathcal{R}_s,  \frac{a'}{q'} \neq \frac{a}{q}}  \widehat{ L_N^{a', q'}} ( \xi ) \Delta_s  \left( \xi - \frac{a'}{q'} \right)
\Biggr\rvert 
   \ll N^{-1/2} \sum_{s \colon 2^s \leq   (\log N)^{2A}} 2^{-s} \ll N^{-1/2}. 
\end{equation*}
For $2^s > (\log N)^{2A}  $ we use the trivial bound  for $\widehat{M_N^\omega} $, the estimate for the 
Gauss sums in \eqref{e:newgausssum},  as well as the support property \eqref{e:disjoint}. This yields the estimate 
\begin{equation} \label{e:larges}
  \sum_{s \colon 2^s >  (\log N)^{2A} } 
    \sum_{ \substack{\frac{a'}{q'} \in \mathcal{R}_s \\ \frac{a'}{q'} \neq \frac{a}{q}}  }\widehat{ L_N^{a', q'}} 
    ( \xi) \Delta_s  \Bigl( \xi - \frac{a'}{q'} \Bigr) 
 \ll 
 \sum_{s \colon 2^s >  (\log N)^{2A} } 2^{- 3s/4} \ll (\log N)^{-A}.  
\end{equation}
Above, the sums exclude the case of $a'/q'= a/q$.  That is the central case, the  one 
Lemma \ref{majorarcestimate} was designed for.

We turn to the case of $a'/q'=a/q$ here. 
With appropriate choice of $Q$ and $A$ in that Lemma, we obtain from \eqref{e:kernel_approx}, 
\begin{align*}
\Bigl\lvert  \widehat{A_N ^{\omega }}(\xi) -  \widehat{ L_N^{a, q}} ( \xi ) \Delta _s  \Bigl( \xi - \frac{a}{q} \Bigr) \Bigr\rvert  
&\leq 
\Bigl\lvert  \widehat{A_N ^{\omega }}(\xi) -  \widehat{ L_N^{a, q}} ( \xi )  \Bigr\rvert  
+
\Bigl\lvert  \widehat{ L_N^{a, q}} ( \xi ) \Bigl(1-\Delta _s  \Bigl( \xi - \frac{a}{q} \Bigr)\Bigr) \Bigr\rvert  
\\
& \ll 
(\log N)^{-A} + \frac{N(q)^2}{ N(\xi -a/q)^{1/2}} \ll (\log N)^{-A}.
\end{align*}
This holds by choice of $a/q$. 
That completes this case.  

\vspace{1em}

\noindent
\textbf{Case 2: Suppose $ (\log N)^{4A} \leq N(q)   $}. 
Both terms are small.   By the Vinogradov inequality in Theorem \ref{vingradovgaussian} we have  
 \begin{align*}
    \lvert \widehat{A_N ^{\omega }}(\xi)  \lvert  \ll  (\log N)^{-A}. 
\end{align*}
It remains to show that $\widehat B_N(\xi ) $ is also small.  That function is a sum over integers $s \geq 0$.  
For $2^s > (\log N)^{2A}$, we only need to use the estimate \eqref{e:larges}.  
Thus, our focus turns to the case of $2^s \leq (\log N)^{2A}$.

For $2^s \leq (\log N)^{2A} $ we have for $ \frac{a'}{q'} \in \mathcal{R}_s $
\begin{align*}
    N \Bigl( \xi - \frac{a'}{q'}   \Bigr) 
        & \geq N \Bigl( \frac{a'}{q'} - \frac{a}{q}  \Bigr) - N \Bigl( \xi -  \frac{a}{q}  \Bigr) 
        \\
        & \geq \frac{1}{N(q')N(q)} - \frac{1}{N(q)} \frac{1}{N^{1/4}} 
        \\
        & \geq \frac{2^{-s-1}}{N(q)}\gg N^{-1/8}. 
\end{align*}
From the decay estimate in \eqref{e:expsum}, we have 
\begin{equation*}
\widehat{ L_N^{a', q'}} ( \xi) \Delta_s  \Bigl( \xi - \frac{a'}{q'} \Bigr) 
\ll \left( N N \left( \xi - \frac{a'}{q'} \right) \right)^{-3/4} \ll  N ^{-3/32}
\end{equation*}
Using the disjointness property \eqref{e:disjoint}, it then easy to see that 
\begin{equation}\label{case1}
\sum_{2^s \leq  \sqrt{Q}}  \sum_{ \frac{a'}{q'} \in \mathcal{R}_s,  \frac{a'}{q'} \neq \frac{a}{q}} 
 \widehat{ L_N^{a', q'}} ( \xi ) \Delta_s  \left( \xi - \frac{a'}{q'} \right) 
\ll (\log N)^{-A}. 
\end{equation}
That completes the second case, and hence the proof of our Theorem. 
\end{proof}


\bigskip
\section{Estimates for the High and Low Parts}
Our High and Low decomposition of the multiplier incorporates a notion of smooth numbers.  
For integer $Q = 2^{q_0} \ll (\log N)^B$, we say that a Gaussian integer is $Q$-smooth if  $q$ is square free  and  the product of primes $\rho $ with $N(\rho )\leq Q$.  Here, $B$ will be a fixed integer.  

We write  $A_N ^{\omega } = \textup{Lo}_{Q,N} ^\omega + \textup{Hi} _{Q,N} ^{\omega }$, where 
\begin{align}  \label{e:Lo}
\widehat{\Lo}_{Q,N} ^{\omega }(\xi) &= \sum_{q \colon N(q) < Q} \sum_{a\in \mathbb{A}_q}
\Phi(a,\bar q) \widehat{M_N ^{\omega }} (\xi-\frac{a}{q}) \Delta_{q_0} (\xi-\frac{a}{q}).  
\end{align}
Here, we recall that $\omega\subset \mathbb{T}$ is an interval, and $N > N _ \omega$ 
is sufficiently large.   
The average $M ^\omega_N$ is defined in \eqref{e:MN}, the Gauss sum $\Phi(a,\bar q)$ in 
\eqref{e:newgausssum}.  (Note that the Gauss sum will be zero if $q$ contains a square.) 
This definition is inspired by Theorem \ref{theorem:kernel_approximation}.  But, the definition above incorporates 
not only smoothness, but the cutoff function  $\Delta_{q_0}$ is a function of $Q$. 
Both changes are useful in the next section. 
There are two key properties of these terms.  The first, is that the `High' part has small $\ell^2$ norm.  

\begin{lemma} \label{l:Hi}
For any $\epsilon >0$, $\omega \subset \mathbb T$, 
there in an $N_ \omega$ so that for all $N> N_\omega$, 
\begin{align} \label{e:Hi}
\| \Hi_{Q,N}^{\omega }\|_{\ell_2\rightarrow \ell_2} \ll Q^{-1+\varepsilon}.
\end{align}
\end{lemma}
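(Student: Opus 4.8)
The operator $\Hi_{Q,N}^{\omega}$ is a Fourier multiplier, so by Plancherel it suffices to bound $\|\widehat{\Hi}_{Q,N}^{\omega}\|_\infty$, where $\widehat{\Hi}_{Q,N}^{\omega} = \widehat{A_N^{\omega}} - \widehat{\Lo}_{Q,N}^{\omega}$. The strategy is to split a generic frequency $\xi$ into a major-arc case and a minor-arc case via Dirichlet's theorem, exactly as in the proof of Theorem \ref{theorem:kernel_approximation}, but now tracking powers of $Q$ rather than powers of $\log N$. First I would fix $\xi$ and choose relatively prime $a,q$ with $N(q) \leq N^{1/4}$ and $N(\xi - a/q) \leq (N(q) N^{1/4})^{-1}$.

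\textbf{Minor arc case: $N(q) > Q$ (or more generally $N(q)$ at least a small power of $Q$).} Here $\widehat{A_N^{\omega}}(\xi)$ is controlled directly by the Vinogradov inequality, Theorem \ref{vingradovgaussian}, which gives $|\widehat{A_N^{\omega}}(\xi)| \ll \log^2(N)/N(q)^{1/2} + N^{-1/100} \ll Q^{-1/2}\log^2 N$. For $\widehat{\Lo}_{Q,N}^{\omega}(\xi)$, each summand involves $\Delta_{q_0}(\xi - a'/q')$ with $N(q') < Q$; as in Case 2 of Theorem \ref{theorem:kernel_approximation}, the separation $N(\xi - a'/q') \gg N(q')^{-1} N(q)^{-1} \cdot 2^{-s}$ combined with the decay estimate \eqref{e:expsum} for $\widehat{M_N^{\omega}}$ and the disjoint support property forces this to be $\ll N^{-c}$ for some $c>0$. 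A small subtlety: one must absorb the extra $\log^2 N$ against $Q^{-1/2}$; since $Q \ll (\log N)^B$ this is not automatically better than $Q^{-1+\varepsilon}$, so I would instead run the minor-arc threshold at $N(q) > Q^{2-\varepsilon}$ rather than $N(q)>Q$, so that Vinogradov yields $\ll Q^{-1+\varepsilon/2}\log^2 N \ll Q^{-1+\varepsilon}$, and handle $Q < N(q) \le Q^{2-\varepsilon}$ in the major-arc regime below.

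\textbf{Major arc case: $N(q) \leq Q^{2-\varepsilon}$.} Here I would apply Lemma \ref{majorarcestimate} (with its $Q$ replaced by a suitable power of the present $Q$, and $A$ chosen large) to write $\widehat{A_N^{\omega}}(\xi) = \Phi(a,\bar q)\widehat{M_N^{\omega}}(\xi - a/q) + O((\log N)^{-A'})$, which for large $A'$ is $\ll Q^{-1}$. The main point is then to compare this with $\widehat{\Lo}_{Q,N}^{\omega}(\xi)$. If $N(q) < Q$ and the single matching term $a/q$ in the $\Lo$ sum has $\Delta_{q_0}(\xi - a/q) = 1$ — which happens precisely when $N(\xi - a/q)$ is small enough, i.e.\ $16^{q_0}(\xi-a/q)$ lies where $\eta = 1$ — then the main terms cancel and we are left with the contribution of the \emph{other} rationals $a'/q'$ in the $\Lo$ sum, each of which is $\ll N^{-c}$ by the same separation-plus-decay argument. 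The remaining subcases are: (i) $N(\xi - a/q)$ is too large for $\Delta_{q_0}$ to equal $1$, in which case $\widehat{M_N^{\omega}}(\xi - a/q)$ itself is tiny by \eqref{e:expsum} (the decay $(N \cdot N(\xi - a/q))^{-3/4}$ kicks in once $N(\xi-a/q) \gg 16^{-q_0} \gg Q^{-C}$, giving a bound $\ll (N Q^{-C})^{-3/4}$ which is a negative power of $N$), so both $\widehat{A_N^{\omega}}(\xi)$ and the $\Lo$ term are small; and (ii) $Q \le N(q) \le Q^{2-\varepsilon}$, where Lemma \ref{majorarcestimate} still applies and $\Phi(a,\bar q)\widehat{M_N^{\omega}}(\xi - a/q)$ must be shown small — this uses the bound $|\Phi(a,\bar q)| = |\tau_{\bar q}(a)|/\phi(\bar q) \ll N(q)^{\varepsilon}/\phi(q) \ll Q^{-1+\varepsilon}$ from Corollary \ref{crlr:multiplicative} and $|\tau_q(a)| \le \mu(q)$-type estimates, together with $|\widehat{M_N^{\omega}}| \le 1$, while the matching $\Lo$ term is absent because $N(q) \ge Q$.

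\textbf{Main obstacle.} The delicate point is the bookkeeping at the transition $N(q) \approx Q$: I must make sure the Vinogradov bound, the Gauss-sum factor $\Phi(a,\bar q)$, and the cutoff $\Delta_{q_0}$ are all calibrated so that every case genuinely produces $Q^{-1+\varepsilon}$ and not merely $Q^{-1/2}$ or $(\log N)^{-A}$ (the latter is weaker since $Q$ can be as small as a constant power of $\log N$). Concretely, the binding constraint is the $|\Phi(a,\bar q)| \ll Q^{-1+\varepsilon}$ estimate in the regime $N(q)\sim Q$; this is where the square-free restriction in the definition of $Q$-smoothness and the multiplicativity of $\tau_q$ from Section on Ramanujan sums are essential, since they give $\phi(q) \gg N(q)^{1-\varepsilon} \gg Q^{1-\varepsilon}$. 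Once that estimate is in hand, everything else is the routine disjoint-support-plus-decay argument already rehearsed in Theorem \ref{theorem:kernel_approximation}.
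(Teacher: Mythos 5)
Your overall strategy coincides with the paper's: reduce to $\|\widehat{\Hi}_{Q,N}^{\omega}\|_\infty$ by Plancherel and then run a Dirichlet dissection, using Theorem \ref{vingradovgaussian} on minor arcs, Lemma \ref{majorarcestimate} on major arcs, the Gauss-sum bound $|\Phi(a,\bar q)|\ll N(q)^{-1+\varepsilon}$, and the disjointness of the cutoffs (the paper merely packages the major/minor analysis once and for all as Theorem \ref{theorem:kernel_approximation}, and then splits $\widehat{B_N^{\omega}}-\widehat{\Lo}_{Q,N}^{\omega}$ into a large-denominator piece $\Hi^1$ and a cutoff-mismatch piece $\Hi^2$). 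However, your calibration of the minor-arc threshold has a genuine gap. With minor arcs at $N(q)>Q^{2-\varepsilon}$, Vinogradov yields $\ll Q^{-1+\varepsilon/2}\log^2 N$, and this is $\ll Q^{-1+\varepsilon}$ only when $\log^2 N\ll Q^{\varepsilon/2}$, i.e. $Q\gg(\log N)^{4/\varepsilon}$. Nothing guarantees this: the section only assumes $Q=2^{q_0}\ll(\log N)^B$, the lemma is claimed for every $\varepsilon>0$ (so even for $Q=(\log N)^B$ your bound fails once $\varepsilon<4/B$), and in the proof of Theorem \ref{fixedscale} the lemma is invoked with a $Q$ dictated by $|F|,|G|$, which can be bounded or a tiny power of $\log N$. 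Relatedly, your remark that a $(\log N)^{-A}$ bound is weaker than $Q^{-1+\varepsilon}$ is backwards: since $Q\ll(\log N)^B$ with $B$ fixed, $(\log N)^{-A}\le Q^{-1+\varepsilon}$ as soon as $A\ge B$, so such error terms are perfectly acceptable here.

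The repair is exactly the paper's organization: do not ask Vinogradov to produce any $Q$-decay. Place the minor-arc threshold at $N(q)\ge(\log N)^{4A}$ with $A$ large depending only on $B$, where Theorem \ref{vingradovgaussian} gives $\ll(\log N)^{-A}\ll Q^{-1}$; treat the entire intermediate range $Q\lesssim N(q)\lesssim(\log N)^{4A}$ as major arcs, where after Lemma \ref{majorarcestimate} (equivalently, Theorem \ref{theorem:kernel_approximation}) the surviving terms carry the factor $|\Phi(a,\bar q)|\ll N(q)^{-1+\varepsilon}$, and the dyadic sum $\sum_{2^s\gtrsim Q}2^{-(1-\varepsilon)s}\ll Q^{-1+\varepsilon}$ is the sole source of the claimed decay (this is the paper's $\Hi^1$; the mismatch between $\Delta_{q_0}$ and the scale-$s$ cutoff is its $\Hi^2$ and likewise gives $\ll Q^{-1}$). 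One smaller point: since $\eta$ is a tent function, $\Delta_{q_0}(\xi-a/q)=1$ only at the single point $\xi=a/q$, so your "main terms cancel exactly" step should be replaced by the quantitative bound $|1-\Delta_{q_0}(\theta)|\ll 16^{q_0}\|\theta\|_\infty$ played against the decay \eqref{e:expsum} of $\widehat{M_N^{\omega}}$, which still yields a negative power of $N$. With these recalibrations the rest of your argument goes through and is essentially the paper's proof.
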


\begin{proof}
The $\ell^2$ norm is estimated on the frequency side. 
By Theorem \ref{theorem:kernel_approximation},  the High term is a sum of three terms.  They are, suppressing the dependence on $\omega $, 
\begin{align}
\widehat{\Hi_{Q,N} ^1}(\xi) & = \sum_{Q<2^{s+1} }
\sum_{ \substack{2^s \leq N(q) < 2^{s-1} \\  N(q) \geq  Q }}
\sum_{ \frac{a}{q} \in \mathcal R_s} \Phi(a,\bar q)\widehat{M_N^{\omega }}(\xi-\frac{a}{q})\Delta_s(\xi-\frac{a}{q})
\\
\widehat{\Hi_{Q,N} ^2}(\xi) & = 
\sum_{s}
\sum_{ \substack{2^s \leq N(q) < 2^{s-1} \\ N(q) < Q }}
\sum_{ \frac{a}{q} \in \mathcal R_s} \Phi(a,\bar q) \widehat{M_N^{\omega }} (\xi-\frac{a}{q}) \{\Delta_{q_0} (\xi-\frac{a}{q}) - \Delta_s (\xi-\frac{a}{q})\}
\\ 
\widehat{\Hi_{Q,N} ^3}(\xi)   & \coloneqq  \widehat{A ^{\omega }_N} -\widehat{B ^{\omega }_N }.
\end{align}
We address them in reverse order. 
The last term is controlled by Theorem \ref{theorem:kernel_approximation}. It  clearly satisfies our claim \eqref{e:Hi}. 

In the $\widehat{\Hi_{Q,N} ^2}$, the key term is the last difference between $\Delta _{q_0}$ and $\Delta  _s$.  
In particular, if $\Delta_{q_0} (\xi) - \Delta_s (\xi) \neq 0$, 
we have $N(\xi)\gg 2 ^{-q_0} $. 
It follows that $\widehat{M_N^{\omega }} (\xi) $ is relatively small. 
This allows us to estimate 
\begin{align}
\lVert \widehat{\Hi_{Q,N} ^2}(\xi)  \rVert_\infty 
&\leq 
\sum_{s}
\sum_{ \substack{2^s \leq q < 2^{s-1} \\ N(q) < Q}}
\max_{ \frac{a}{q} \in \mathcal R_s} 
\bigl\lVert \Phi(a,\bar q) \widehat{M_N^{\omega }} (\xi-\frac{a}{q}) \{\Delta_{q_0} (\xi-\frac{a}{q}) - \Delta_s (\xi-\frac{a}{q})\} 
\bigr\rVert_\infty 
\\
&\ll 
\sum_{s}
2^{- 3s/4} \min\{1, (N2^{-q_0})^{-3/4} + N^{-1/2}  \} \ll Q^{-1}. 
\end{align}
Here, we have used the disjointness of support for the different functions, and the exponential sum estimate Lemma \ref{expsum}.

It remains to bound the  term $\widehat{\Hi_{Q,N} ^1}$. But the smallest denominator $q$ that we sum over satisfies at least $N(q)>Q$, 
so that a similar argument 
leads to  
\begin{align}
\lVert \widehat{\Hi_{Q,N} ^1} \rVert_\infty 
 &  \ll 
 \sum_{Q<2^s<N^{1/4}}
\sum_{ \substack{2^s \leq N(q) < 2^{s-1} \\ N(q) \geq Q}}
\max_{ \frac{a}{q} \in \mathcal R_s} \lvert \Phi(a,\bar q) \rvert
 \\
 & = \sum_{Q<2^s<N^{1/4}} 2^{-(1- \epsilon  )s} \ll Q^{-1+ \epsilon }. 
\end{align}
\end{proof}

We turn to the Low term.  It has an explicit form in spatial variables. 
 \begin{lemma}\label{low}
For $x\in \Z[i]$ we have the equality below, in which recall that $2^{q_0} =  Q < (\log N)^B$. 
\begin{equation}  
\label{e:LoEquals}
    \textup{Lo}_{N,Q} ^\omega  (x) =  \left( M_N^{\omega } \ast \widecheck{\Delta_ {q_0}}\right)(x)
     \sum_{ q \colon N(q) < Q }\frac{\mu (q) \tau _q (x)}{ \phi (q)} 
\end{equation}
And, moreover, for all $\epsilon >0$, and non-negative $f$
\begin{align} \label{e:LOOO}
 \Lo_{Q,N} ^\omega f (x) \ll   Q ^{ \epsilon }  \bigl[ ( M_N^{\omega } \ast \widecheck{\Delta_ {q_0}}) \ast f  ^{1+ \epsilon } (x) \bigr] ^{1/(1+\epsilon )}. 
\end{align}
\end{lemma}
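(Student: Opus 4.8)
The first identity \eqref{e:LoEquals} is the engine, and it comes from recognizing that the Fourier multiplier $\widehat{\Lo}_{Q,N}^{\omega}$ defined in \eqref{e:Lo} is a superposition, over the modulus $q$ and the residue $a \in \mathbb{A}_q$, of modulated copies of the single multiplier $\widehat{M_N^\omega}(\xi)\Delta_{q_0}(\xi)$, each translated by a rational $a/q$. Taking inverse Fourier transforms, a modulation by $a/q$ on the frequency side becomes multiplication by $e(\langle x, a/q\rangle)$ on the spatial side; thus
\begin{equation*}
\Lo_{Q,N}^\omega(x) = \bigl(M_N^\omega \ast \widecheck{\Delta_{q_0}}\bigr)(x) \sum_{q \colon N(q) < Q} \frac{1}{\phi(q)} \sum_{a \in \mathbb{A}_q} \tau_q(a)\, e(\langle x, a/q\rangle).
\end{equation*}
The inner sum $\sum_{a \in \mathbb{A}_q} \tau_q(a)\, e(\langle x, a/q\rangle)$ is a Ramanujan-type sum in the dual variable, and the task is to show it equals $\mu(q)\tau_q(x)$. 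The cleanest route is to expand $\tau_q(a) = \sum_{b \in \mathbb{A}_q} e(\langle a, b/q\rangle)$ from \eqref{def-generalizedareagausssum1}, interchange the order of summation, and collapse the sum over $a \in \mathbb{A}_q$ using Cohen's identity (Lemma \ref{xplusrestimate-lm}) together with the orthogonality relations of Section 3 (Lemma \ref{orthoidentity-lm} and Corollary \ref{cor:Orthogonality}); indeed Lemma \ref{xplusrestimate-lm} already gives $\sum_{r \in \mathbb{A}_{\bar q}} \tau_q(x+r) = \mu(q)\tau_q(x)$, and the sum over $a \in \mathbb{A}_q$ of $\tau_q(a) e(\langle x, a/q\rangle)$ is a Fourier-dual repackaging of exactly this, using that $\{a/q : a \in \mathbb{A}_q\}$ and conjugation match up with $\mathbb{A}_{\bar q}$. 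I would verify this carefully, since the conjugates $q$ versus $\bar q$ need to be tracked; this bookkeeping is the first place to be cautious.

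The second claim \eqref{e:LOOO} is a pointwise inequality for non-negative $f$. Here the plan is to bound the arithmetic factor $\sum_{q \colon N(q)<Q} \frac{\mu(q)\tau_q(x)}{\phi(q)}$ in absolute value and then absorb it. By Corollary \ref{crlr:multiplicative} and the divisor-type bound from Lemma \ref{lm:multiplicative}/Corollary \ref{c:GaussianCohenCor} one has $|\tau_q(x)|/\phi(q) \ll N(\gcd(x,q))^{} N(q)^{\epsilon}/\phi(q) \ll N(q)^{\epsilon - 1} N(\gcd(x,q))$; summing over $N(q) < Q$ and using the smooth-number restriction gives that the total arithmetic factor is $\ll Q^{\epsilon}$ uniformly in $x$ — actually, more sharply, it is dominated by a short Euler-type product, but the crude $Q^\epsilon$ bound suffices. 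Then
\begin{equation*}
\Lo_{Q,N}^\omega f(x) = \sum_y \Lo_{Q,N}^\omega(x-y) f(y) \ll Q^\epsilon \sum_y \bigl(M_N^\omega \ast \widecheck{\Delta_{q_0}}\bigr)(x-y)\, f(y),
\end{equation*}
using that $M_N^\omega \ast \widecheck{\Delta_{q_0}} \geq 0$ (recall $\widecheck{\Delta_{q_0}}$ is a tensor product of Fejér kernels, hence non-negative, as emphasized after \eqref{eq-etadefinition}). Finally, to upgrade from a plain convolution bound to the stated $(1+\epsilon)$-power form, apply Hölder's (or Jensen's) inequality with respect to the non-negative kernel $K := M_N^\omega \ast \widecheck{\Delta_{q_0}}$, which has total mass $\ll 1$: for non-negative $f$,
\begin{equation*}
K \ast f(x) \leq \|K\|_1^{\epsilon/(1+\epsilon)} \bigl(K \ast f^{1+\epsilon}(x)\bigr)^{1/(1+\epsilon)} \ll \bigl(K \ast f^{1+\epsilon}(x)\bigr)^{1/(1+\epsilon)},
\end{equation*}
which gives \eqref{e:LOOO} after combining with the $Q^\epsilon$ factor.

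I expect the main obstacle to be the first part: correctly evaluating the dual Ramanujan sum $\sum_{a \in \mathbb{A}_q} \tau_q(a) e(\langle x, a/q\rangle) = \mu(q)\tau_q(x)$, keeping the $q$ versus $\bar q$ conjugation straight through the Fourier transform conventions of \eqref{e:FDq}. Once that identity is pinned down, the rest — the arithmetic bound on $\sum \mu(q)\tau_q(x)/\phi(q)$ and the Hölder step — is routine, relying only on Corollary \ref{crlr:multiplicative}, the multiplicativity of $\phi$, and the non-negativity of the Fejér kernel.
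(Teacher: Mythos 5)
Your treatment of the identity \eqref{e:LoEquals} is essentially the paper's argument: invert the Fourier transform so that the modulation by $a/q$ becomes the factor $e(\langle x,a/q\rangle)$, recognize the resulting sum over $a\in\A_q$ as a Cohen-type sum, and collapse it with Lemma \ref{xplusrestimate-lm}; the $q$ versus $\bar q$ bookkeeping you flag is exactly what the paper tracks, and the concluding Jensen/H\"older step against the non-negative kernel $M_N^{\omega}\ast\widecheck{\Delta_{q_0}}$ is also how the paper passes to the $(1+\epsilon)$-power form.

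However, the second part of your proposal has a genuine gap: the claimed uniform pointwise bound
\begin{equation}
\sum_{q\colon N(q)<Q}\frac{\lvert\tau_q(x)\rvert}{\phi(q)}\ll Q^{\epsilon}\qquad\text{uniformly in }x
\end{equation}
is false, and the triangle-inequality-plus-gcd estimate you give cannot deliver it. Indeed $\tau_q(x)=\phi(q)$ whenever $q\mid\bar x$, so each squarefree divisor $q$ of $\bar x$ with $N(q)<Q$ contributes $1$ to the sum: at $x=0$ the sum is $\asymp Q$, and for $x$ of norm at most $N$ divisible by all Gaussian primes of norm $\leq c\log N$ the sum is at least the number of squarefree divisors of $x$ of norm below $Q=(\log N)^B$, which is $\gg(\log N)^{B-1-o(1)}$, i.e.\ essentially $Q^{1-1/B}$ — far larger than $Q^{\epsilon}$. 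The $Q^{\epsilon}$ gain is only true \emph{on average} in $x$, and this is precisely why the paper proved Proposition \ref{p:BourgainRamanujan}. The correct route (the paper's) is to keep the arithmetic weight inside the convolution: write $\Lo f(x)=\sum_y\bigl(M_N^{\omega}\ast\widecheck{\Delta_{q_0}}\bigr)(y)\sum_q\frac{\lvert\tau_q(y)\rvert}{\phi(q)}f(x-y)$ (dyadically in $N(q)\sim2^s$), apply H\"older with a large exponent $k>2\epsilon^{-1}$ with respect to the measure $\bigl(M_N^{\omega}\ast\widecheck{\Delta_{q_0}}\bigr)(y)\,dy$, and control the resulting $k$-th moment $\sum_y\bigl(M_N^{\omega}\ast\widecheck{\Delta_{q_0}}\bigr)(y)\bigl[\sum_{N(q)\sim2^s}\lvert\tau_q(y)\rvert/\phi(q)\bigr]^{k}\ll2^{k\epsilon s}$ via \eqref{e:BourgainRamanujan}; summing over $2^{s}\lesssim Q$ then yields \eqref{e:LOOO}. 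Without replacing your uniform pointwise claim by this averaged (moment) estimate, the proof of \eqref{e:LOOO} does not go through.
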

 
\begin{proof}
We have for fixed $q$, 
\begin{align} 
\sum_{a \in \A_q}\Phi(a,\bar q) \int_{D}\widehat{M_N^\omega}(\xi-\frac{a}{q})\Delta_{q_0}(\xi-\frac{a}{q})e(\langle x,\xi\rangle)d\xi
    & =\sum_{a \in \A_q}\Phi(a,\bar q) e(\langle x,\frac{a}{q}\rangle)\left( M_N \ast \widecheck{\Delta_ {q_0}}\right)(x)
    \\
    & =  \left( M_N^{\omega } \ast \widecheck{\Delta_ {q_0}}\right)(x) \frac{1}{ \phi (\bar q)} \sum_{a \in \A_q}\tau_{\bar q}(a) e(\langle x,\frac{a}{q}\rangle)
    \\ 
    & =  \left( M_N^{\omega } \ast \widecheck{\Delta_ {q_0}}\right)(x) \frac{1}{ \phi (q)} \sum_{r \in \A_{\bar q} } \tau_{q}(x+r) .
\end{align}
We then apply Lemma \ref{xplusrestimate-lm}, and sum over $Q$-smooth denominators $q$ to conclude the first claim \eqref{e:LoEquals}.

For the second claim,  we use \eqref{e:BourgainRamanujan} in the standard way.  Fix $ 2 ^{s-1} \leq Q$, and consider the operator $A$ with kernel 
\begin{equation}
A_s  (x) =   \left( M_N^{\omega } \ast \widecheck{\Delta_ {q_0}}\right)(x)  \sum _{q \colon 2 ^{s} \leq q < 2  ^{s}}  
\frac {\lvert \tau _q (x) \rvert}{\phi (q)} . 
\end{equation}
For an  integer $k > 2\epsilon ^{-1}$, and non-negative $f \in \ell^{1+ \epsilon }$,  we have 
\begin{align}
 A_s f (x) & = \sum_y  \left( M_N^{\omega } \ast \widecheck{\Delta_ {q_0}}\right)(y)  \sum _{q \colon 2 ^{s} \leq q < 2  ^{s}}   
 \frac {\lvert \tau _q (y) \rvert}{\phi (q)} f (x-y) 
\\
& \leq 
\Bigl[  
 \sum_y  \left( M_N^{\omega } \ast \widecheck{\Delta_ {q_0}}\right)(y)  f(x-y) ^{k/(k-1)} \Bigr] ^{(k-1)/k}
\\ 
\qquad & \times 
\Bigl[ \sum_y  \left( M_N^{\omega } \ast \widecheck{\Delta_ {q_0}}\right)(y)  
 \sum _{q \colon 2 ^{s} \leq q < 2  ^{s}}   
 \Bigl[ \frac {\lvert \tau _q (y) \rvert}{\phi (q)}\Bigr] ^{k}
  \Bigr] ^{1/k} 
\\ 
& \ll 2 ^{\epsilon s} 
\Bigl[  
 \sum_y  \left( M_N^{\omega } \ast \widecheck{\Delta_ {q_0}}\right)(y)  f(x-y) ^{k/(k-1)} \Bigr] ^{(k-1)/k}
. 
 \end{align} 
We sum this over $ 2 ^{s-1} < Q$ to complete the proof of \eqref{e:LOOO}.

\end{proof}


\section{Improving Inequalities}

In this brief section, we establish the improving inequalities, namely Theorem \ref{t:fixedscale}.  And, list some additional results that we could establish.  
For the convenience of the reader, we restate the improving inequality here, in a slightly more convenient form for our subsequent discussion.  And, there is no loss of generality to reduce to the trivial case of a sector. 

\begin{theorem} \label{fixedscale} 
For all $N$, and $1< p \leq 2$, we have, for $\omega = \mathbb{T} $, and functions $f, g $ supported on $[0,\sqrt N]^2$, 
\begin{align*}
N^{-1} \langle A_N ^{\mathbb{T} }f,g\rangle \ll N ^{- 2/p}  \lVert f \rVert_p \lVert g \rVert_p. 
\end{align*}
\end{theorem}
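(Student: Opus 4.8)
The plan is to use the High/Low decomposition $A_N^{\mathbb T} = \Lo_{Q,N}^{\mathbb T} + \Hi_{Q,N}^{\mathbb T}$ with a fixed, suitably large value of $Q$ (a power of $2$), and to interpolate. By duality and the restricted support of $f,g$, the claimed bound is equivalent to an $\ell^p \to \ell^{p'}$ improving estimate with constant $\ll N^{1/p'-1/p}$ for $1<p\le 2$. At the endpoint $p=2$ the bound is trivial: $N^{-1}\langle A_Nf,g\rangle \ll N^{-1}\|A_N\|_{2\to 2}\|f\|_2\|g\|_2$, and since $A_N$ is an $\ell^1$-normalized average of the von Mangoldt weight over a disc of volume $\asymp N$ with total mass $\asymp N$, Parseval and the trivial Fourier bound $\|\widehat{A_N}\|_\infty \ll 1$ give $\|A_N\|_{2\to 2}\ll 1$, which is exactly the $p=2$ case. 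So the real content is the $p=1$ endpoint (interpreted as an $\ell^1\to\ell^\infty$ bound after renormalizing), where one needs $\|A_N f\|_\infty \ll \|f\|_1$, i.e. the kernel $A_N(x) = \tfrac{2\pi}{|\mathbb T| N}\Lambda(n)\mathbf 1_{N(n)<N}$ is $\ll 1$ pointwise — but this is false for prime powers, so one does NOT prove a clean $\ell^1\to\ell^\infty$ bound; instead one interpolates between $p=2$ and a near-endpoint estimate coming from the Low term being essentially an honest (bounded) average.

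Concretely, first I would dispose of the High term: by Lemma \ref{l:Hi}, $\|\Hi_{Q,N}^{\mathbb T}\|_{2\to 2}\ll Q^{-1+\epsilon}$, and since $\Hi$ is also bounded on $\ell^1\to\ell^1$ and $\ell^\infty\to\ell^\infty$ by crude kernel bounds (its kernel is a difference of $\ell^1$-normalized-type objects, with total mass $\ll \log N$, say), interpolation gives $\|\Hi_{Q,N}^{\mathbb T}f\|_{p'} \ll Q^{-c(p)}(\log N)^{C}\,\|f\|_p$ on the cube $Q$ of volume $N$ after inserting the $|Q|^{1/p}, |Q|^{1/p'}$ normalizations; with $Q$ chosen to beat the logarithm, this term is negligible — in fact, for the fixed-scale statement, one can take $Q$ literally a large constant and absorb everything. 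Second, and this is the heart, I would handle the Low term using Lemma \ref{low}: by \eqref{e:LOOO}, for non-negative $f$,
\begin{align*}
\Lo_{Q,N}^{\mathbb T} f(x) \ll Q^{\epsilon}\bigl[(M_N^{\mathbb T}\ast \widecheck{\Delta_{q_0}})\ast f^{1+\epsilon}(x)\bigr]^{1/(1+\epsilon)}.
\end{align*}
The point is that $M_N^{\mathbb T}\ast\widecheck{\Delta_{q_0}}$ is (up to the fixed constant $Q^{O(1)}$) an $\ell^1$-normalized average over a region of volume $\asymp N$, so it satisfies the trivial improving estimate $\|(M_N^{\mathbb T}\ast\widecheck{\Delta_{q_0}})\ast g\|_{p'}\ll N^{1/p'-1/p}\|g\|_p$ for $1\le p\le\infty$ — this is just Young's inequality, since such an average has $\ell^1$ norm $\ll Q^{O(1)}$ and $\ell^\infty$ norm $\ll Q^{O(1)}/N$. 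Feeding $g = f^{1+\epsilon}$ through Young at exponent $p/(1+\epsilon)$ and then undoing the power via the pointwise bound above recovers the desired estimate for $\Lo$, with an extra harmless $Q^{O(\epsilon)}$ factor.

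The main obstacle I anticipate is bookkeeping the non-negativity reduction and the $(1+\epsilon)$-power gymnastics in \eqref{e:LOOO}: the bilinear form $\langle A_N f, g\rangle$ must be controlled for general (signed/complex) $f,g$, so one reduces to $f,g\ge 0$ by $|A_N f|\le A_N|f|$ (valid since $A_N$ has a non-negative kernel), applies the Low bound to $A_N|f|$ tested against $|g|$, and then must check that the loss from raising to the $(1+\epsilon)$ power and lowering back can be arranged to cost only a constant in the final exponent — this works because at the level of $\ell^p\to\ell^{p'}$ one has room: choosing $\epsilon = \epsilon(p)$ small relative to $p-1$ keeps $p/(1+\epsilon)>1$ so Young's inequality stays in its valid range, and the resulting exponent $N^{(1+\epsilon)(1/p'-1/p)/(1+\epsilon)} = N^{1/p'-1/p}$ is exactly right. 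Once $p>1$ is fixed, everything is uniform in $N$, which is all Theorem \ref{fixedscale} requires; the full $\ell^p$-improving theorem with explicit constant then follows by tracking that none of these steps introduces $N$-dependence beyond the stated power.
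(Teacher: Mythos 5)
Your treatment of the Low term is fine and is essentially the paper's: \eqref{e:LOOO} plus Young's inequality for the bounded kernel $M_N^{\omega}\ast\widecheck{\Delta_{q_0}}$, with the $(1+\epsilon)$-power bookkeeping exactly as you describe. The genuine gap is in the High term, and it is fatal to the architecture you propose. The bound of Lemma \ref{l:Hi} is an operator-norm bound, $\lVert \Hi_{Q,N}\rVert_{2\to2}\ll Q^{-1+\epsilon}$, and the crude $\ell^1\to\ell^1$, $\ell^\infty\to\ell^\infty$ (or even $\ell^1\to\ell^\infty$ kernel-sup) bounds you want to interpolate with carry no decay in $N$ beyond $1/N$ times log-type losses coming from $\Lambda$ and from the arithmetic factor in the Low kernel. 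Interpolating these can never produce the required constant $N^{1/p'-1/p}$: with $Q$ a fixed constant you get $\lVert \Hi f\rVert_{p'}\ll Q^{-c(p)}(\log N)^{C}\lVert f\rVert_p$, which is enormously larger than $N^{1/p'-1/p}\lVert f\rVert_p$ (a negative power of $N$), so the claim that ``one can take $Q$ literally a large constant and absorb everything'' is false; and even taking $Q=(\log N)^B$ and using the $\ell^1\to\ell^\infty$ endpoint, the worst-case kernel bound for $\Hi$ is of size $Q/N$, so the interpolated constant is $N^{1/p'-1/p}\,Q^{\,1-(4-2\epsilon)/p'}$, which blows up for $p'\geq 4$, i.e.\ for $p$ near $1$. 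The point you are missing is that the $Q^{-1+\epsilon}$ gain at $\ell^2$ must be measured against the ratio of $\lVert f\rVert_2\lVert g\rVert_2$ to the target $N^{1-2/p}\lVert f\rVert_p\lVert g\rVert_p$, and that ratio can be as large as a power of $N$ when $f,g$ are concentrated; no fixed (or even poly-log) choice of $Q$ made in advance can beat it.

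The paper resolves exactly this by a different mechanism, all of which is absent from your sketch: it first reduces to restricted estimates $f=\mathbf{1}_F$, $g=\mathbf{1}_G$; then it runs a case analysis, using the trivial bound (dominating $\Lambda$ by $\log N$) when $|F|\,|G|\ll N^{2}(\log N)^{-2p'}$, so that the concentrated case never reaches the High/Low decomposition at all; in the complementary case it chooses $Q$ \emph{depending on the data}, $Q^{2(1+\epsilon)/(1-\epsilon)}\simeq N^{2}/(|F|\,|G|)\ll(\log N)^{2p'}$, which balances the High bound $\langle\Hi f,g\rangle\ll Q^{-1+\epsilon}(\lVert f\rVert_1\lVert g\rVert_1)^{1/2}$ against the Low bound $\ll N\,Q^{\epsilon}\lVert f\rVert_{1+\epsilon}\lVert g\rVert_{1+\epsilon}$; finally it obtains the open range $1<p<2$ by interpolating the resulting family of restricted estimates (indexed by $\epsilon$), which is also what removes the logarithmic losses that a direct strong-type argument at a single $p$ cannot avoid. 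To repair your proposal you would need to add precisely these three ingredients (indicator reduction, trivial-bound case analysis, and the $Q$ chosen as a function of $\lVert f\rVert_1\lVert g\rVert_1$), at which point it becomes the paper's proof.
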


\begin{proof}
As the angle $\omega = \mathbb{T} $, we suppress it in the notation. 
We must prove the inequalities  over the open range of $1 < p < 2$. So, it suffices to consider the case that 
$f = \mathbf{1}_F$ and $g=  \mathbf{1}_G$, for $F, G \subset \{ n \colon N(n) \leq N\}  $.  
Interpolation proves them as stated.

Dominating the von Mangoldt function by $\log N$, we always have 
\begin{align*}
\langle A_Nf,g\rangle \ll  N (\log N) 
\lvert F \rvert \cdot 
\lvert G \rvert.  
\end{align*}
We can then immediately deduce the inequality if 
\begin{equation}
   N^{-2} \lvert F \rvert  \cdot 
 \lvert G \rvert  \ll (\log N) ^{-2p'} 
\end{equation}

So, we assume that this inequality fails, which will allow us to use our High Low decomposition.  
Namely,  for $0< \epsilon  < 1/2$ sufficiently small, set 
\begin{equation}
Q ^{ \frac {2(1+ \epsilon)}{1- \epsilon } } \simeq \frac {N^2}  
{ \lvert F \rvert  \cdot 
 \lvert G \rvert } \ll (\log N) ^{2p'}.
\end{equation}
Write  $A_N = \textup{Hi}_{N,Q} + \textup{Lo} _{N,Q}$.  
Appealing to \eqref{e:Hi} for the High term, and \eqref{e:LOOO} for the Low term, we have 
\begin{align}
\langle\textup{Hi}_{N,Q} f, g   \rangle & \ll  Q ^{-1+ \epsilon } \lVert f \rVert_2 \lVert g \rVert_2 
\ll  Q ^{-1+ \epsilon } [ \lVert f \rVert_1 \lVert g \rVert_1] ^{1/2}, 
\\
\langle\textup{Lo}_{N,Q} f, g   \rangle & \ll  N Q ^{\epsilon } \lVert f \rVert_{1+ \epsilon } \lVert g \rVert_{1+\epsilon }.  
\end{align}
By choice of $Q$, the two upper bounds nearly agree and are at most 
\begin{equation}
\langle\textup{Lo}_{N,Q} f, g   \rangle 
\simeq N ^{-1 + 2 \epsilon  }\bigl[  \lvert F \rvert  \cdot 
 \lvert G \rvert] ^{1- 2 \epsilon }. 
\end{equation}
That is the desired inequality, for $p' = \frac {1+ 2  \epsilon } \epsilon $.  And so completes the proof. 

\end{proof}

The techniques developed to establish the improving inequality can be elaborated on to prove additional results. 
We briefly describe them here.  

\begin{enumerate}
    \item  An $\ell^p \to \ell^p$, for $1< p < \infty $ inequality for the maximal function 
    $ \sup_N \lvert A_N f  \rvert $. Compare to \cite{MR995574}.  

    \item  A $(p,p)$, $1< p <2$,  sparse bound for the maximal function. 
    Here we use the terminology of \cite{MR4072599}, for instance. The interest in the sparse bound is that it immediately implies a range of weighted inequalities.  

    \item One can establish pointwise convergence of ergodic averages. 
    Let $(T_1, T_2) $ be commuting invertible measure preserving transformations of a probability space $(X, \mu)$.  For all $1< p < \infty$, and $f\in L^p (X)$, the limit 
    \begin{equation}
        \lim_{N} \frac1 N \sum_{ N(n)<N} \Lambda (n) f(T^n x) 
    \end{equation}
    exists for a.e.$(x)$.  Here, $T^{a+ib}=T_1^aT_2^{b}$. 
    Compare to \cite{MR3646766}. 
\end{enumerate}
We have given references particular to the primes (in $\mathbb Z$). 

\section{Goldbach Conjecture} \label{s:Goldbach}

The purpose of this section is to prove analogues of the Goldbach Conjecture on the Gaussian setting. 
We recall some elementary facts about Gaussian primes.   We address a binary and ternary form of the Goldbach conjecture. 
The binary form is addressed  in density form. Namely, most even integers are the sum of two primes. On the other hand all sufficiently large odd integers are the sum of three primes. 
We further restrict the arguments of the integers to be in a fixed interval.

\subsection{The Binary Goldbach Conjecture} 
\label{sub:the_binary_goldbach_conjecture}

 The  Goldbach Conjecture states that every even Gaussian integer can be written as the sum of two primes. We prove a density version of this result. 
It  uses  the High/Low decomposition. 
Observe that 
\begin{equation}
A_N ^\omega \ast A_N ^\omega  (n)  = 
\frac {2 \pi} { \lvert \omega\rvert^2  N^2} 
\sum_{ \substack{N(m_1), N(m_2) <N \\ m_1 + m_2 =n \\ \arg(m_1), \arg(m_2) \in \omega}} 
\Lambda (m_1) \Lambda(m_2).   
\end{equation}
If the sum is non-zero,  $n$ can be represented as the sum of two numbers in the support of the von Mangoldt function  $\Lambda$ intersected with 
$S ^ \omega_N$, where we define  
\begin{equation}
S ^\omega_N = \{ n \colon N(n) <N,  \arg(n) \in \omega   \}. 
\end{equation}
The von Mangoldt function is supported on the Gaussian primes, and their powers. The powers have density less than $ \ll \sqrt N $.   
Thus, it suffices to establish that 
\begin{equation}
\lvert   \{ n \in S ^\omega_N \colon  \textup{$n$ even \& }  A_N  ^\omega\ast A_N^\omega  (n) = 0  \} \rvert \ll \frac N {(\log N)^B} . 
\end{equation}

Recall from \eqref{e:Lo}, that we can write 
\begin{equation} \label{e:GHiLo}
 A_N ^\omega =  {\textup{Hi} } +  {\textup{Lo} }. 
\end{equation}
This depends upon a choice of $Q = (\log N)^B$ for some sufficiently large power of $B$,
 and we suppress the dependence on $Q$, $N$ and $\omega$ in the notation. 
   Thus, we write 
\begin{equation} \label{e:A=LoHi} 
A_N \ast A_N  = \textup{Hi} \ast A_N 
+ \textup{Lo} \ast \textup{Hi} + \textup{Lo} \ast \textup{Lo}. 
\end{equation}
On the right, the last term that is crucial. 
We further write it as 
\begin{equation}
   \textup{Lo} \ast \textup{Lo} 
   = \textup{Main} + \textup{Error}.  
\end{equation}
Aside from the main term, everything is small.  The binary part of 
 Theorem \ref{t:Goldbach}easily follows from the Lemma below, 
in which we collect the required estimates.  

\begin{lemma} \label{l:Estimates} We have the estimates below, valid for all choices of $B>1$. 
Setting $\tilde N = \tilde N_B \coloneqq [N  (\log N) ^{(B-1)/2} ] ^{-1} $
\begin{align} \label{e:LoLo}
   \lvert\{ n \in S ^{\omega }_N  \setminus S ^{\omega }{3N/4}  \colon  n\ \textup{even}, \ \textup{Main} (n) < 100\tilde N  
   \} \rvert &\ll  N ^2 \tilde N  ,  
    \\  \label{e:Error}
    \lvert \{ 0< N(n) <N  \colon    \lvert \textup{Error} (n) \rvert >   \tilde N 
      \rvert 
    & \ll N ^2 \tilde N , 
    \\ \label{e:LoHi}
   \lVert \textup{Lo} \ast \textup{Hi} (n)  \rVert _{\ell^2} & \ll \tilde N  ^{1/2}, 
  \\ \label{e:AHi}
  \lVert \textup{Hi} \ast A_N (n)  \rVert _{\ell^2}& \ll  \tilde N  ^{1/2}.   
    \end{align}
\end{lemma}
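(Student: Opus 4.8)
The plan is to prove the four estimates \eqref{e:LoLo}--\eqref{e:AHi} essentially independently, with \eqref{e:LoLo} carrying the real weight. I begin with the two $\ell^2$ bounds, which follow from the operator bound in Lemma \ref{l:Hi}. For \eqref{e:AHi}, write $\textup{Hi} \ast A_N (n) = \textup{Hi}(A_N \delta_0 \ast (\cdot))$ after noting $A_N \ast A_N = A_N(A_N)$, so that $\|\textup{Hi}\ast A_N\|_{\ell^2} \le \|\textup{Hi}\|_{\ell^2\to\ell^2} \|A_N\|_{\ell^2}$; since $A_N$ is an $\ell^1$-normalized measure of mass $\ll \log N$ supported on $O(N)$ points, $\|A_N\|_{\ell^2} \ll N^{-1/2}\log N$ (crudely, using $\Lambda(n)\le \log N$), and then $\|\textup{Hi}\|_{\ell^2\to\ell^2}\ll Q^{-1+\epsilon} = (\log N)^{-B+\epsilon B}$ beats any power of $\log N$, so the product is $\ll N^{-1/2}(\log N)^{-10} \ll \tilde N^{1/2}$ for $B$ large. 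Estimate \eqref{e:LoHi} is the same, using $\|\textup{Lo}\|_{\ell^2\to\ell^2}\ll \log N$ (from \eqref{e:LoEquals}: $M_N^\omega \ast \widecheck{\Delta_{q_0}}$ is an $\ell^1$-normalized average, convolution with it is an $\ell^2$ contraction up to constants, and $\sum_{N(q)<Q}\frac{|\mu(q)\tau_q(x)|}{\phi(q)}$ is $\ll \log Q$ on average — or one just uses the trivial $\ell^\infty$ bound $\ll Q^\epsilon$ on the multiplier side combined with the support count) against $\|\textup{Hi}\|_{\ell^2\to\ell^2}$.

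Next the error term \eqref{e:Error}. Here $\textup{Lo}\ast\textup{Lo} = \textup{Main} + \textup{Error}$, where $\textup{Main}$ is the ``diagonal'' contribution $q_1 = q_2$ in the product of the two $\textup{Lo}$ multipliers and $\textup{Error}$ collects the off-diagonal terms $q_1 \ne q_2$ (and any cutoff-mismatch terms). Using \eqref{e:LoEquals}, $\textup{Lo}\ast\textup{Lo}(n)$ is a sum over pairs $(q_1,q_2)$ of a convolution of two copies of $M_N^\omega\ast\widecheck{\Delta_{q_0}}$ — which is $M_N^\omega\ast M_N^\omega$ smoothed, of size $\ll N^{-1}$ pointwise and mass $\ll N$ — against the arithmetic factor $\frac{\mu(q_1)\tau_{q_1}(n)}{\phi(q_1)}\cdot\frac{\mu(q_2)\tau_{q_2}(n)}{\phi(q_2)}$. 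The main term, being the diagonal, produces the singular series; the off-diagonal sum is controlled in $\ell^1$ (or $\ell^2$) by orthogonality of the Ramanujan sums $\tau_q$ across distinct $q$, i.e. by Proposition \ref{p:BourgainRamanujan} applied with $k=2$, which gives $\frac1N\sum_{N(n)<N}\big(\sum_{N(q)<Q}|\tau_q(n)|\big)^2 \ll Q^{2+\epsilon}$, so the off-diagonal piece has $\ell^1$ norm $\ll N \cdot Q^{2+\epsilon}\cdot N^{-1} = Q^{2+\epsilon}$ — wait, one must track the $N$-normalization carefully: the convolution of two $\ell^1$-mass-$\ll N$ kernels normalized by $N^{-2}$ gives mass $\ll 1$, times the arithmetic factor, and Chebyshev converts the mean-square bound into the measure estimate \eqref{e:Error}. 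The key input is that distinct Ramanujan sums do not correlate, which is exactly what Proposition \ref{p:BourgainRamanujan} encodes.

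The main obstacle, and the heart of the argument, is \eqref{e:LoLo}: showing $\textup{Main}(n) \gg \tilde N$ for all but $\ll N^2\tilde N$ even $n$ away from the origin. By the diagonal structure, $\textup{Main}(n) = \big(M_N^\omega\ast\widecheck{\Delta_{q_0}}\ast M_N^\omega\ast\widecheck{\Delta_{q_0}}\big)(n) \cdot \mathfrak{S}_Q(n)$, where $\mathfrak{S}_Q(n) = \sum_{N(q)<Q}\frac{\mu(q)^2\tau_q(n)^2}{\phi(q)^2}$ is a truncated singular series (here using $\tau_q(n)$ real — it is, being $\mu(q/(q,\bar n))\phi(q)/\phi(q/(q,\bar n))$ by Corollary \ref{crlr:multiplicative}). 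The analytic factor: $M_N^\omega\ast M_N^\omega(n) \gg N(n)\delta(n)/N^2 \cdot$ (correct normalization) $\gg \tilde N$-ish for $n$ well inside the doubled sector with $N(n)$ of order $N$, by Lemma \ref{l:convolve}; the smoothing by $\widecheck{\Delta_{q_0}}$ (a nonnegative Fejér-type average of width $\sim Q$) only helps since it is a positive average and $Q = (\log N)^B \ll \sqrt N$. The arithmetic factor: $\mathfrak{S}_Q(n)$ is a positive Euler-product-like quantity; for \emph{even} $n$ it is bounded below by an absolute constant (the local factor at the prime $1+i$ behaves well precisely because $n$ is even, which is the role of the evenness hypothesis — this is the Gaussian analogue of the classical ``$2\mid n$'' condition in binary Goldbach), while for the exceptional $n$ where $\mathfrak{S}_Q(n)$ is anomalously small one shows these are sparse, again via a second-moment estimate of the form $\sum_{N(n)<N}\mathfrak{S}_Q(n) \asymp N$ combined with $\sum_{N(n)<N}\mathfrak{S}_Q(n)^2 \ll N$, i.e. another application of Proposition \ref{p:BourgainRamanujan}. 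The delicate point I expect to fight with is matching the normalizations so that the product of the analytic lower bound $\gg \tilde N \cdot (\text{something})$ and the arithmetic lower bound $\gg 1$ genuinely clears the threshold $100\tilde N$ uniformly, and correctly accounting for the shrinking factor $\delta(n)$ near the sector boundary — but on $S_N^\omega \setminus S_{3N/4}^\omega$ with the stated distance condition this is built into the bound via Lemma \ref{l:convolve}, and the $(\log N)^{(B-1)/2}$ loss in $\tilde N$ is exactly the room one needs to absorb the truncation error in the singular series.
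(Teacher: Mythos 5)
Your treatment of \eqref{e:LoHi} and \eqref{e:AHi} is essentially the paper's argument (via Plancherel: $\lVert \widehat{\Hi}\rVert_\infty \ll Q^{-1+\epsilon}$ from \eqref{e:Hi} against the \emph{kernel} norms $\lVert A_N\rVert_{\ell^2}, \lVert \Lo\rVert_{\ell^2}\ll N^{-1/2}(\log N)^{O(1)}$), though note that for \eqref{e:LoHi} the product of the two operator norms $\lVert\Lo\rVert_{\ell^2\to\ell^2}\lVert\Hi\rVert_{\ell^2\to\ell^2}$ does not by itself bound the $\ell^2$ norm of the kernel $\Lo\ast\Hi$; you need one factor to be a kernel norm. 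The genuine gap is in your reading of $\textup{Main}$ and $\textup{Error}$. These are specific objects: $\textup{Main}$ in \eqref{e:Main} carries the arithmetic sum over \emph{all $Q$-smooth moduli} (square-free, prime factors of norm $<Q$, so $N(q)$ can be as large as $e^{c(\log N)^B}$), precisely so that Lemma \ref{l:HLConstantlemma} can factor it as an exact finite Euler product and give a lower bound $\gg 1$ for \emph{every} even $x$, with no exceptional set; and $\textup{Error}=\Lo\ast\Lo-\textup{Main}$ is \emph{not} an off-diagonal sum. There are no cross terms at all: the cutoffs $\Delta_{q_0}(\cdot-a/q)$, $N(q)<Q$, have pairwise disjoint supports, so $\widehat{\Lo}{}^{\,2}$ is already ``diagonal''. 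What $\textup{Error}$ actually contains is (i) the cutoff mismatch $1-\Delta_{q_0}^2$ and (ii) the tail of the singular series over $Q$-smooth $q$ with $N(q)\geq Q$, and (ii) is the real difficulty.

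Your tools do not reach (ii). Proposition \ref{p:BourgainRamanujan} is a moment bound on $\sum_q\lvert\tau_q(x)\rvert$, not an orthogonality statement between distinct Ramanujan sums, and applying it with $k=2$ to the unweighted sum over $N(q)<Q$ and using Chebyshev at the required threshold $N\tilde N=(\log N)^{-(B-1)/2}$ produces an exceptional set of size $\gg N(\log N)^{3B}$, far too large. The paper instead splits $Q\leq N(q)<N^{1/8}$ into dyadic blocks and uses the $k=8$ moment from \eqref{e:BourgainRamanujan} (getting exceptional sets of size $N2^{-3s}$ per block), and for the huge smooth moduli $N(q)\geq N^{1/8}$ — which lie far beyond the reach of any such moment bound — argues pointwise with $\tau_q(x)\ll N(\gcd(x,q))$ and divisor-function estimates. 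Relatedly, in \eqref{e:LoLo} your singular series has $\tau_q(n)^2$; Cohen's identity (Lemma \ref{xplusrestimate-lm}) collapses $\sum_{a\in\A_q}\tau_q(a)^2e(\langle a/q,x\rangle)$ to $\lvert\mu(q)\rvert\tau_q(x)$, i.e.\ first power — with the square, evenness would be irrelevant and the lower bound trivial, which signals the error. Finally, your fallback for anomalously small truncated singular series (first/second moment comparison) only yields a positive proportion of good $n$ (Paley--Zygmund), not ``all but $O(N^2\tilde N)$'' even $n$; the smooth-moduli Euler-product device, together with the high-moment and divisor treatment of the tail, is exactly the missing idea. (The near-boundary $n$, where $M_N^\omega\ast M_N^\omega(n)\ll\delta(n)/N$ is small, simply go into the exceptional set of \eqref{e:LoLo} via Lemma \ref{l:convolve}; there is no distance hypothesis in this lemma.)
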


\begin{proof}[Proof of Binary Goldbach Theorem.] 
It suffices to show that the set of even integers $x \in S ^{\omega }_N$ with $N(x)> 3N/4$ 
that are not the sum of two primes $p_1,p_2 \in S ^{\omega }_N$ has cardinality at most $\ll \frac N{(\log N)^{B/4}} $, 
for an integer $B>10$.   

The number of representations of $x$ as the sum of two primes is 
\begin{equation}
A_N \ast A_N(x) = \textup{Main} + \textup{Error} + 
\textup{Hi} \ast A_N 
+ \textup{Lo} \ast \textup{Hi} . 
\end{equation}
We apply the previous Lemma to these terms. Note that $ N ^2 \tilde N = N/(\log N)^{(B-1)/2}$.  
The principal term is $\textup{Main} $.  
By \eqref{e:LoLo} it satisfies the conclusion of the Theorem.  We need to see that the remaining terms are all small, off an exceptional set of size $ N ^2 \tilde N$.  For $\textup{Error}$, that is exactly the conclusion of 
\eqref{e:Error}.  For the next term, $\textup{Hi} \ast A_N $, observe that 
\begin{align}
\lvert \{n\in S _{N} ^{\omega } \colon  \textup{Hi} \ast A_N \gg \tilde N\} \rvert 
& \ll  
\tilde N ^{-2} \lVert \textup{Hi} \ast A_N  \rVert_2 ^2  
\\ 
& \ll \tilde N ^{-2} \cdot [ N (\log N) ^{4B}] ^{-1}  \ll N (\log N) ^{-B}.  
\end{align}
Here, we apply \eqref{e:AHi} with a larger value of $B$.   The last term is controlled in the same way by \eqref{e:LoHi}. 
\end{proof}

We focus on the first estimate above. The Main term is  
\begin{equation}
    \label{e:Main} 
    \textup{Main}(x) \coloneqq 
    \sum_{\textup{$q$ is $Q$-smooth}} \frac{1}{\phi(q)^2} \sum_{a \in \A_q} \tau_{q} (a)^2 e\bigl(\langle \tfrac{a}{q} ,x\rangle\bigr) 
      \int_{\mathbb{T}^2} \widehat{M_N^\omega}(\xi)^2 e\bigl(\langle \xi ,x\rangle\bigr) \; d\xi 
\end{equation}
Above, we say that $q$ is \emph{$Q$-smooth} if $q$ is square free and all prime factors $\rho $ of $q$ satisfy $N(q)< Q$.
The expression above can be calculated explicitly, 
using the Ramanujan like sum \eqref{def-generalizedareagausssum1}. 

\begin{lemma}\label{l:lowlowpart}
Recall that $2^{q_0} =  Q  < (\log N)^B$.  
For every $N(x) <N$ we have
\begin{align}  \label{e:lowlow}
     \textup{Main}(x)  =  M_N ^{\omega}  \ast  M_N ^{\omega}  (x)\sum_{\textup{$q$ is $Q$-smooth}} \frac{|\mu(q)|}{\phi(q)^2}  \tau_{q} (x), 
\end{align}
\end{lemma}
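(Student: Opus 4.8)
The plan is to evaluate the two structurally distinct pieces of $\textup{Main}(x)$ in \eqref{e:Main} separately: the arithmetic sum over $Q$-smooth $q$, and the archimedean integral $\int_{\mathbb{T}^2}\widehat{M_N^\omega}(\xi)^2 e(\langle \xi,x\rangle)\,d\xi$. For the arithmetic factor, note that for each fixed $Q$-smooth $q$ we have, by the definition \eqref{def-generalizedareagausssum1} of $\tau_q$ and since $\tau_{\bar q}=\overline{\tau_q}$ with $\phi(q)=\phi(\bar q)$,
\begin{equation}
\frac{1}{\phi(q)^2}\sum_{a\in\A_q}\tau_q(a)^2 e\bigl(\langle \tfrac{a}{q},x\rangle\bigr)
= \frac{1}{\phi(q)^2}\sum_{a\in\A_q}\tau_q(a)\,\tau_{\bar q}(a)\,e\bigl(\langle \tfrac{a}{q},x\rangle\bigr),
\end{equation}
and I would recognize the $a$-sum as $\sum_{a\in\A_q}\tau_q(a)\, e(\langle a/q, x+r\rangle)$ summed against a Ramanujan sum, exactly the structure treated in Lemma \ref{indicatoridentity-lm} and its use in the proof of Lemma \ref{low}. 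Precisely, proceeding as in the first display chain of the proof of Lemma \ref{low} (replacing the single $\widehat{M_N}(\xi-a/q)$ there by the square, which is irrelevant to the $a$-algebra), the inner sum over $a\in\A_q$ collapses via Lemma \ref{xplusrestimate-lm} to $\mu(q)\tau_q(x)$ up to the factor $\phi(q)$, giving $\frac{\mu(q)}{\phi(q)^2}\tau_q(x)$; since the sum is over square free $q$, $\mu(q)=|\mu(q)|\cdot(\pm1)$ but the sign is absorbed because Lemma \ref{xplusrestimate-lm} already produces $\mu(q)\tau_q(x)$ and, combined with the Gauss-sum normalization $\Phi(a,\bar q)^2$, the net contribution is $\tfrac{|\mu(q)|}{\phi(q)^2}\tau_q(x)$ as claimed. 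I would write this out carefully to confirm the sign bookkeeping.

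For the archimedean factor, the key observation is that $\widehat{M_N^\omega}$ is, by definition \eqref{e:MN}, the Fourier transform of the measure $M_N^\omega$, so $\widehat{M_N^\omega}(\xi)^2 = \widehat{M_N^\omega\ast M_N^\omega}(\xi)$, and Fourier inversion on $\mathbb{T}^2$ gives $\int_{\mathbb{T}^2}\widehat{M_N^\omega}(\xi)^2 e(\langle\xi,x\rangle)\,d\xi = M_N^\omega\ast M_N^\omega(x)$ for $x\in\Z[i]$. This is just Plancherel/inversion for the discrete group $\Z^2$ and requires no estimate.

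Multiplying the two evaluated factors yields exactly the right-hand side of \eqref{e:lowlow}. The only genuine point requiring care — and the step I expect to be the main obstacle — is the algebraic manipulation of the $a$-sum: matching the square $\tau_q(a)^2$ against the orthogonality identity to produce $\sum_{r\in\A_{\bar q}}\tau_q(x+r)$ and then invoking Cohen's identity (Lemma \ref{xplusrestimate-lm}) with the correct conjugations $q$ versus $\bar q$, together with tracking that $\Phi(a,\bar q)=\tau_{\bar q}(a)/\phi(\bar q)$ and hence $\Phi(a,\bar q)^2$ carries $\phi(q)^{-2}$. Everything else is bookkeeping: the restriction to $Q$-smooth $q$ is inherited verbatim from the definition of $\textup{Main}$, and the constraint $N(x)<N$ plays no role beyond ensuring $x$ lies in the natural domain. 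I would therefore structure the proof as: (i) Fourier inversion for the integral; (ii) the Cohen-identity reduction of the arithmetic sum, citing the computation already done in Lemma \ref{low}; (iii) assembling the product.
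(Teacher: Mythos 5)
Your overall route is the same as the paper's: Fourier inversion identifies the integral factor in \eqref{e:Main} with $M_N^{\omega}\ast M_N^{\omega}(x)$ (the paper takes this for granted), and the arithmetic factor is collapsed with Cohen's identity, Lemma \ref{xplusrestimate-lm}. The gap is in your execution of the arithmetic step. The display chain in the proof of Lemma \ref{low} that you propose to quote handles a sum carrying a \emph{single} factor $\Phi(a,\bar q)\propto \tau_{\bar q}(a)/\phi(q)$, and there one application of Cohen's identity yields $\mu(q)\tau_q(x)/\phi(q)$. In $\textup{Main}$ the weight is $\tau_q(a)^2/\phi(q)^2$, and the square is not ``irrelevant to the $a$-algebra''; it is exactly what changes the outcome. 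The correct computation (this is \eqref{e:tautau} in the paper) expands both factors,
\begin{equation}
\sum_{a\in\A_q}\tau_q(a)^2\, e\bigl(\langle \tfrac aq, x\rangle\bigr)
=\sum_{r_1,r_2\in\A_q}\tau_q(x+r_1+r_2),
\end{equation}
and applies Lemma \ref{xplusrestimate-lm} \emph{twice}, first in $r_2$ and then in $r_1$, producing $\tau_{\bar q}(1)^2\,\tau_q(x)=\mu(q)^2\tau_q(x)=\lvert\mu(q)\rvert\,\tau_q(x)$. A single application, which is what your sketch describes, leaves $\mu(q)\tau_q(x)$, and your claim that ``the sign is absorbed'' by the Gauss-sum normalization $\Phi(a,\bar q)^2$ is not a justification: that normalization is precisely the factor $\tau_q(a)^2/\phi(q)^2$ you have already used once, and it supplies no additional sign. (Your intermediate identity $\tau_q(a)^2=\tau_q(a)\tau_{\bar q}(a)$ also needs the conjugation conventions checked, though the paper itself is loose on this point.)

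The distinction $\mu(q)$ versus $\lvert\mu(q)\rvert$ is not cosmetic bookkeeping here. The binary singular series must be $\sum_q \lvert\mu(q)\rvert\,\tau_q(x)/\phi(q)^2$ for the lower bound of Lemma \ref{l:HLConstantlemma} to hold for \emph{even} $x$, whereas the ternary analogue \eqref{e:ternary1} genuinely carries $\mu(q)$ (an odd power of the Ramanujan factor) and is instead bounded below for \emph{odd} $x$; the parity dichotomy that drives both Goldbach arguments lives entirely in that sign. So the fix is concrete and small --- replace the appeal to Lemma \ref{low} by the double application of Cohen's identity above --- but as written the key step of your argument does not establish \eqref{e:lowlow}.
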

 
\begin{proof}
The term in \eqref{e:Main} is 
\begin{align*}
     \textup{Main} (x) &=    M_N(x) ^{\omega} \ast M_N ^{\omega}  (x)
    \sum_{\textup{$q$ is $Q$-smooth}} \frac{1}{\phi(q)^2} \sum_{a \in \A_q}  \tau_{q} (a)^2 e\bigl(\langle \tfrac{a}{q} ,x\rangle\bigr) 
\end{align*}
In the arithmetic term above, we fix $q$, expand the Ramanujan sums, and we use Lemma \ref{xplusrestimate-lm}.  This gives us 
\begin{align}
    \frac{1}{\phi(q)^2} \sum_{a \in \A_q} 
     \tau_{q} (a)^2 e\bigl(\langle \tfrac{a}{q} ,x\rangle\bigr) 
   &= 
    \frac{1}{\phi(q)^2} \sum_{a \in \A_q} 
   \Bigl[  \sum_{r\in \mathbb{A}_q} e\bigl(\langle \tfrac{a}{q} ,r\rangle\bigr)  \Bigr]^2 e\bigl(\langle \tfrac{a}{q} ,x\rangle\bigr)
     \\
    & = \frac{1}{\phi(q)^2} \sum_{r_1 \in \A_q} \sum_{r_2 \in \A_q} \tau_{q} (x+r_1+r_2) 
    \\
    &=\frac{1}{\phi(q)^2} \sum_{r_1 \in \A_q}  \tau_{q} (x+r_1)\tau_{\bar q} (1)
    \\  \label{e:tautau}
    & = \frac{1}{\phi(q)^2}  \tau_{q} (x)\tau_{\bar q} (1)^2 , 
\end{align}
where in the last line we have used Lemma \ref{xplusrestimate-lm} again. Finally $\tau_{\bar q} (1)^2 = \lvert \mu (q) \rvert $.  
\end{proof}

On the right in our equality for the Low-Low expression \eqref{e:lowlow}, the first 
convolution was analyzed in Lemma \ref{l:convolve}. We have 
\begin{align} 
\lvert  \{ x\in S ^{\omega }_N \colon N(x) \geq \tfrac 34 N ,\  & 
M_N ^{\omega }\ast  M_N ^{\omega}(n) \leq    [ N (\log N)^B] ^{-1}  
 \} \rvert 
 \\ 
 & \leq 
 \lvert  \{ x\in S ^{\omega }_N \colon  \textup{dist}(\textup{arg}(x), \mathbb{T} \setminus \omega ) 
 \leq C (\log N) ^{-B} \}\rvert 
 \\ 
 & \ll N (\log N) ^{-B}. 
\end{align}
We analyze the arithmetic part here. It does not require an exceptional set. 
And the Lemma below completes the proof of \eqref{e:LoLo}.  
Indeed, it is exactly this Lemma and its proof that motivates the use of the smooth numbers.

\begin{lemma}\label{l:HLConstantlemma}
We have for all even $x$, 
\begin{align} \label{e:HLConstantlemma}
    \sum_{\textup{$q$ is $Q$-smooth}} \frac{|\mu(q)|}{\phi(q)^2}  \tau_{q} (x) \gg  1
    \end{align}
\end{lemma}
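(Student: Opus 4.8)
The plan is to recognize the sum as a truncated Euler product over the Gaussian primes of norm at most $Q$, and to show every local factor is bounded below by a positive absolute constant. The only delicate factor is the one at the prime $1+i$, and controlling it is exactly the role played by the hypothesis that $x$ is even; everything else converges harmlessly.

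First I would invoke multiplicativity. Since $|\mu(q)|$, $\phi(q)$, and $\tau_q(x)$ are all multiplicative in $q$ (the last by Corollary \ref{crlr:multiplicative}), and ``$q$ is $Q$-smooth'' means $q$ runs over square-free products of primes of norm $\le Q$, the sum factors as
\begin{equation*}
\sum_{\textup{$q$ is $Q$-smooth}} \frac{|\mu(q)|}{\phi(q)^2}\tau_q(x) = \prod_{\substack{\rho \textup{ prime}\\ N(\rho)\le Q}}\Bigl(1 + \frac{\tau_\rho(x)}{\phi(\rho)^2}\Bigr).
\end{equation*}
Then, using Lemma \ref{lm:multiplicative} at a single prime, $\tau_\rho(x) = \phi(\rho) = N(\rho)-1$ when $\rho \divides \bar x$ and $\tau_\rho(x) = \mu(\rho) = -1$ otherwise, so each local factor equals $1 + \phi(\rho)^{-1}$ in the first case and $1 - \phi(\rho)^{-2}$ in the second. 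Next I would separate off $\rho = 1+i$: every other Gaussian prime has norm $\ge 5$, hence $\phi(\rho)\ge 4$ and its local factor lies in $[\,15/16,\infty)$, in particular it is positive and at least $1 - (N(\rho)-1)^{-2}$. The factor at $1+i$ has $\phi(1+i)=1$ and would vanish in the bad case $1+i\nmid\bar x$; but $x$ even gives $1+i\divides x$, hence $1+i\divides\bar x$ (since $1+i$ and $\overline{1+i}$ are associates), so that factor is $1+\phi(1+i)^{-1}=2$. Consequently all local factors are strictly positive and
\begin{equation*}
\sum_{\textup{$q$ is $Q$-smooth}} \frac{|\mu(q)|}{\phi(q)^2}\tau_q(x) \ \ge\ 2\prod_{\substack{\rho\textup{ prime}\\ N(\rho)\ge 5}}\Bigl(1 - \frac{1}{(N(\rho)-1)^2}\Bigr),
\end{equation*}
and the infinite product on the right converges to a strictly positive absolute constant because $\sum_{N(\rho)\ge 5}(N(\rho)-1)^{-2}\ll \sum_n r_2(n)\,n^{-2}<\infty$ by \eqref{sumofsqestimate} and partial summation. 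This lower bound is uniform in $x$ and in $Q$, which is exactly \eqref{e:HLConstantlemma}.

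The only real obstacle is conceptual rather than computational: one must not overlook the prime $1+i$, whose Euler factor degenerates precisely when $x$ is odd, so the argument genuinely uses evenness — and the factorization into local factors of the shape $1+(\,\cdot\,)$ (as opposed to a full local series) is exactly why the Low term was built from square-free smooth denominators. A minor technical point to keep honest is the unit ambiguity in $q$ and in the definition of $\tau_q$; one works throughout with a fixed system of representatives of the associate classes, so that the multiplicative bookkeeping above is unambiguous.
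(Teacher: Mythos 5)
Your proof is correct and follows essentially the same route as the paper: multiplicativity turns the sum over $Q$-smooth $q$ into the Euler product $\prod_{N(\rho)\le Q}\bigl(1+\tau_\rho(x)/\phi(\rho)^2\bigr)$, evenness of $x$ is used exactly to make the degenerate factor at $\rho=1+i$ equal to $2$ rather than $0$, and the remaining factors are bounded below by a convergent positive product. The only cosmetic difference is that the paper organizes the remaining factors as a ``local'' term $h(x)\ge 2$ times a ``global'' convergent product $\mathcal G$, while you lower-bound each factor directly by $1-(N(\rho)-1)^{-2}$; the substance is identical.
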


\begin{proof}
Exploit the multiplicative structure of the sum.  One sees that it is a product over  primes. For any integer $x$, 
\begin{align}\label{e:finallyconstant}
    \sum_{\textup{$q$ is $Q$-smooth}} \frac{|\mu(q)|}{\phi(q)^2}  \tau_{q} (x) 
        & = \prod_{\rho \colon N(\rho ) < Q }  1 + \frac{\tau_{\rho } (x)}{\phi(\rho)^2} 
\end{align}
Above, the product is over primes $\rho $ with $N(\rho )< Q$, up to multiplication by units. 
 And recall that if $\rho\mid x$, 
we have $\tau_\rho(x)= \phi(\rho)$. 

It is important to single out the prime $1+i$. This is the unique prime with  $\phi (1+i)= 1$, and 
\begin{equation}
1 + \frac{\tau_{1+i} (x)}{\phi(1+i)^2} 
= \begin{cases}
2  & 1+i \mid x 
\\
0   & 1+i \nmid x 
\end{cases}
\end{equation}
Thus, if $1+i \nmid x $, the sum in \eqref{e:finallyconstant} is zero. 
But, evenness of $x$ is equivalent to $1+i \mid x$.  Thus, for even $x$, 
single out the case of $\rho=1+i$. 
\begin{align}
  \prod_{\rho \colon N(\rho ) \leq Q }  1 + \frac{\tau_{\rho } (x)}{\phi(\rho)^2}   
  &= 2\prod_{\substack{\rho|x,\ N(\rho ) \leq Q  \\ \rho\neq 1+i }} 
  \left( 1 + \frac{1}{\phi(\rho)}  \right)\left( 1 - \frac{1}{\phi(\rho)^2}  \right)^{-1}
   \times    \prod_{\substack{\rho,\ N(\rho ) \leq Q  \\ \rho\neq 1+i}} 1 - \frac{1}{\phi(\rho)^2} 
    \\
    & =
    2\prod_{\substack{\rho|x,\ N(\rho ) \leq Q  \\ \rho\neq 1+i}}  \frac{\phi(\rho)}{\phi(\rho)-1}  
   \times    \prod_{\substack{\rho,\ N(\rho ) \leq Q  \\ \rho\neq 1+i}}  1 - \frac{1}{\phi(\rho)^2}  
    \\
    & \coloneqq    h (x) \mathcal{G}. 
\end{align}
In the last line, we have written the product as a `local term' $h(x)$  and a `global term,' $\mathcal G$. 
If there is no $Q$-smooth prime that divides $x$, we understand that the local term is $2$. 
Thus, $h(x)$ is always at least 2 for even $x$.    The global term is finite: 
\begin{equation}
\mathcal G \geq \prod _{\rho }  1 - \frac{1}{\phi(\rho)^2}  ,
\end{equation}
and the infinite sum is convergent to a positive number, since 
\begin{align}
\sum_{\rho\ \textup{prime}} 
\frac{1}{\phi(\rho)^2}  & \ll \sum_{\rho\ \textup{prime}}  N(p)^{-2} \ll \sum_{k=1}^\infty k 2^{-k} < \infty.  
\end{align}
That completes our proof.  
\end{proof}

\begin{proof}[Proof of \eqref{e:Error}] 
We control the difference between the Low term and the Main term. This is the term $\textup{Error}$, and we only seek a distributional estimate on it. 
We begin by writing out this term explicitly. 
Recall that 
\begin{equation}
    \widehat{\Lo}(\xi) = \sum_{q \colon N(q)< Q} \sum_{a\in \mathbb{A}_q}
\Phi(a,\bar q) \widehat{M_N ^{\omega }} (\xi-\frac{a}{q}) \Delta_{q_0} (\xi-\frac{a}{q}).  
\end{equation}
By the disjointness of the supports of $\Delta _{q_0}( \cdot - \frac aq)$, for $N(q)< Q$, and the definition of the $\textup{Main}$ term in 
\eqref{e:Main}, we see that 
\begin{align}
\widehat{\Lo}(\xi) \cdot \widehat{\Lo}(\xi) 
& = \sum _{N(q)\leq Q} 
\sum _{a\in \mathbb{A} _q } 
\tau_{q} (a)^2 
       \Delta_ {q_0}(\xi - a/q)^2 \widehat{M_N^\omega}(\xi -a/q)^2 
\end{align}
We have done the work to invert this Fourier transform. In particular from \eqref{e:tautau}, we have 
\begin{align}
{\Lo}\ast {\Lo}(x) 
& =  M_N ^{\omega } \ast \check \Delta _{q_0} \ast  M_N ^{\omega } \ast \check \Delta _{q_0} (x)
 \sum _{q \colon N(q) <N } \frac{\lvert \mu (q) \rvert}{\phi(q)^2}  \tau_{q} (x)  . 
\end{align}

We can then explicitly write $\textup{Error}(x)$ as the sum of these three terms. 
\begin{align} \label{e:E1}
E_1 (x)  & \coloneqq \int (1 - \Delta _{q_0} (\xi ) ^2 ) \widehat{M_N ^{\omega }} (\xi) ^2 e ( \langle \xi ,x \rangle) \; d \xi
\sum _{q \colon N(q) < Q } \frac{\lvert \mu (q) \rvert}{\phi(q)^2}  \tau_{q} (x) , 
\\  \label{e:E2}
E_2 (x) & \coloneqq 
M_N ^{\omega } \ast M_N ^{\omega } (x) 
\sum _{q \colon Q \leq N(q) < N^{1/8} } \frac{\lvert \mu (q) \rvert}{\phi(q)^2}  \tau_{q} (x) , 
\\  \label{e:E3} 
E_3 (x) & \coloneqq 
M_N ^{\omega } \ast M_N ^{\omega } (x) 
\sum _{q \colon   N(q) \geq N^{1/8}  } \frac{\lvert \mu (q) \rvert}{\phi(q)^2}  \tau_{q} (x) . 
\end{align}
We address them in order.

\medskip 
For the control of $E_1$ defined in \eqref{e:E1}, 
an easily accessible $\ell^2$ estimate applies.
We recall that $Q \ll (\log N)^B$.  
By selection of $\Delta_{q_0}$ as a scaled version of a Fejer kernel, we have 
$1 - \Delta _{q_0} (\xi ) ^2 =O(Q^{-3})$  if $ N(\xi ) < Q^{-4}$.  So in this range we have
\begin{align}
   \bigl\lVert 
\int_{N(\xi)<Q^{-4}} (1 - \Delta _{q_0} (\xi ) ^2 ) \widehat{M_N ^{\omega }} (\xi) ^2 e ( \langle \xi ,x \rangle) \; d \xi 
 \bigr\rVert _{\ell^2} \ll  Q^{-3}\int |\widehat{M_N^{\omega}}(\xi)|^2d\xi\ll N^{-1}Q^{-3}.
\end{align}
But then it follows that 
\begin{equation}
\bigl\lVert 
\int (1 - \Delta _{q_0} (\xi ) ^2 ) \widehat{M_N ^{\omega }} (\xi) ^2 e ( \langle \xi ,x \rangle) \; d \xi 
 \bigr\rVert _{\ell^2} \ll  N ^{-1}Q^{-3}.
\end{equation}
On the other hand, it is easy to see that 
\begin{equation}
\sum _{q \colon N(q) < Q } \frac{\lvert \mu (q) \rvert}{\phi(q)^2}  \tau_{q} (x) \ll Q . 
\end{equation}
These two estimates prove the control required in \eqref{e:Error}. In particular we have 
\begin{equation}
   \lVert   E_1  \rVert _{\ell^2} \ll ( N^2 (\log N)^{B-1}) ^{-1/2}. 
\end{equation}
This estimate is stronger than than the required distributional estimate. 

\smallskip 
We turn to the term $E_2$ defined in \eqref{e:E2}. 
For this and $E_3$, the leading 
term $M_N ^{\omega } \ast M_N ^{\omega } (x) \ll N ^{-1}$, so our focus is on the arithmetic terms.
The  definition $E_2$ requires that the denominators $q$ are at least $Q$, and less than $N^{1/8}$. 
The point is that the Ramanujan function $\tau _q$ is rarely more than $1$, as quantified by \eqref{e:BourgainRamanujan}.  
For integers $s$ with $Q/2 < 2^s \leq  N ^{1/8} $, we have 
\begin{equation}
\Bigl\lvert \Bigl\{  
N(x) < N \colon 
\sum _{ 2^s< N(q) \leq 2 ^{s+1}} \lvert \tau _q (x)\rvert 
> 2 ^{3s/2 } 
\Bigr\}\Bigr\rvert \ll N 2 ^{-3s}. 
\end{equation}
This follows from \eqref{e:BourgainRamanujan}, with $k=8$, and trivial bounds on the totient function. 
It is clear that this can be summed over these values of $s$ to complete the proof of \eqref{e:E2} in this case.  
Indeed, we have 
\begin{align}
\Bigl\lvert \Bigl\{  
N(x) < N \colon 
\sum _{ 2^s< N(q) \leq 2 ^{s+1}} \lvert \tau _q (x)\rvert 
> 2 ^{3s/2 }\Bigr\}\Bigr\rvert 
& \ll 
2 ^{-12s} \sum _{x \colon N(x)<N}
\Bigr[ 
\sum _{ 2^s< N(q) \leq 2 ^{s+1}} \lvert \tau _q (x)\rvert 
\Bigr] ^{8}
\\
& \ll 
2^{-3s} N. 
\end{align}
Using a trivial bound lower bound on the Totient function will complete this case.

\smallskip 
We turn to the term $E_3$ defined in \eqref{e:E3}. 
In this case, we require $N^{1/8} <N(q)$. 
And $N(q)$ can be as large and $e^{Q} = e^{c(\log N)^B}$.  
This  is too large to directly apply the previous argument. Instead, we will specialize the proof of 
\eqref{e:BourgainRamanujan} to this setting. 
For an integer $s$ with 
$N^{1/8} <2^{s+1}$, estimate 
\begin{align} \label{e:E31}
\sum _{\substack{q \colon 2^s\leq N(q) < 2 ^{s+1}
\\ q \textup{\ $Q$-smooth}}  } 
\frac{\lvert \mu (q) \rvert}{\phi(q)^2}  \tau_{q} (x)  & \ll 
s ^2  2 ^{-2s} 
\sum _{\substack{q \colon 2^s\leq N(q) < 2 ^{s+1}
\\ q \textup{\ $Q$-smooth}}  } 
N((x,q)) . 
 \end{align}
Above, we have used the familiar upper bound 
$\tau _q (x) \ll N((x,q)) $. 
Write $(x,q)=d$ and $q= q' d$.  
Continue 
\begin{align}
\eqref{e:E31} 
& \ll 
s ^2  2 ^{-2s} 
\sum_{d \textup{\ $Q$-smooth}} 
\mathbf{1}_{d\mid x } N(d)
\sum _{\substack{q' \colon 2^s\leq N(q')N(d) < 2 ^{s+1}
\\ q' \textup{\ $Q$-smooth}}  } 
\mathbf{1}
\\ 
& \ll s ^2 2 ^{-s}  
\sum_{d \textup{\ $Q$-smooth}}  \mathbf{1}_{d\mid x } 
 \ll  2 ^{-s/2}.
\end{align}
In the last line, we have $0< N(x) < N$, and $2^{s+1}> N^{1/8}$, 
so that we can use a favorable estimate on the divisor function.  This estimate is summable in $s$. It follows that 
for $0<N(x) < N$, that we have 
\begin{equation}
    E_3(x) \ll N ^{-17/16}. 
\end{equation}
This completes the analysis of the Error term.

\end{proof}

\begin{proof}[Proof of \eqref{e:LoHi}]  
We have from \eqref{e:Hi} and \eqref{e:LOOO},  
\begin{align}
\lVert \textup{Lo} \ast \textup{Hi}   \rVert _{\ell^2} ^2  
& = \int _{\mathbb{T}^2} \lvert  \widehat {\textup{Lo}}  \cdot \widehat{\textup{Hi}} \rvert ^2  \; d \xi 
\\
& \ll  Q ^{-1+ \epsilon }  \int _{\mathbb{T}^2} \lvert  \widehat {\textup{Lo}}  \rvert ^2  \; d \xi  
\\
& \ll  \frac {Q ^{-1+ 2\epsilon }} N . 
\end{align}
\end{proof}

The previous argument immediately implies the third and final estimate \eqref{e:AHi}. That completes the proof of  Lemma \ref{l:Estimates}, 
and hence the proof of our binary Goldbach Theorem.

\subsection{The Ternary Goldbach Conjecture} 
\label{sub:ternary_goldbach}

We turn to the ternary Goldbach conjecture, using the same notation.  We will show that for all  intervals $\omega \subset \mathbb T$, there is an $N_\omega $, 
so that for $N>N_\omega $, we have 
\begin{equation}  \label{e:TernaryGoldbach} 
 A_N  ^\omega\ast A_N^\omega \ast A_N^\omega  (n) \gg N^{-1}. 
\end{equation}
subject to the condition that $n \in T ^{\omega }_N$, where this condition means that 
\begin{equation} \label{e:Twn}
n \in T ^{\omega }_N \quad \textup{iff} \quad 
\begin{cases}
 n \in S ^{\omega }_N  \ \textup{is odd} 
 \\ 
 \textup{dist}(\textup{arg}(x), \mathbb{T} \setminus \omega ) > C (\log N)^{-B/3}  ,
 \\  
 N(n) > \tfrac 34 N.
\end{cases}
\end{equation}
 That is, every odd integer  in $S^\omega _N $ that is large enough and sufficiently far from the boundary,  has many representations as a sum of three primes, each of which is in the sector $\omega$. 
The middle condition involves an integer  $B>20$. 
It is motivated by Lemma \ref{l:convolve}. And, that Lemma indicates that the number of representations will depend upon the distance of $x$ to the boundary of the sector.

It remains to establish \eqref{e:TernaryGoldbach}.
We turn to the High/Low decomposition as in \eqref{e:GHiLo}, and write 
\begin{align}
A_N  ^\omega\ast A_N^\omega \ast A_N^\omega 
&= \textup{Hi} \ast A_N^\omega \ast A_N^\omega  + 
\textup{Lo} \ast A_N^\omega \ast A_N^\omega  
\\
&=\textup{Hi} \ast A_N^\omega \ast A_N^\omega  + 
\textup{Lo} \ast \textup{Hi} \ast A_N^\omega  + \textup{Lo} \ast \textup{Lo} \ast A_N^\omega   
\\  \label{e:Err3terms}
&=\textup{Hi} \ast A_N^\omega \ast A_N^\omega  + 
\textup{Lo} \ast \textup{Hi} \ast A_N^\omega  + \textup{Lo} \ast \textup{Lo} \ast \textup{Hi} 
+ \textup{Lo} \ast \textup{Lo} \ast \textup{Lo}
\\  
&\eqqcolon \textup{Err} + 
\textup{Lo} \ast \textup{Lo} \ast \textup{Lo}. 
\end{align}
As before, in \eqref{e:Main} we will write 
\begin{equation}\label{e:Main3}
\textup{Lo} \ast \textup{Lo} \ast \textup{Lo}
= \textup{Main}+\textup{Error}. 
\end{equation}
Thus, our focus is on the  Lemma below. It easily completes the proof. 

\begin{lemma}\label{l:3Gold} We have the estimates 
\begin{align}  \label{e:LoLoLo}
 [   N (\log N) ^{2B/3} ]  ^{-1} &\ll \min _{ \substack{ x\in T^\omega_N } } \textup{Main}(x), 
  \\ \label{e:Err}
   \lVert \textup{Error}  \rVert _{\ell^ \infty} +  
   \lVert \textup{Err}  \rVert _{\ell^ \infty} &\ll [ N (\log N)^{B-3}]^{-1}. 
\end{align}
\end{lemma}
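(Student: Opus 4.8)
The plan is to treat the two displayed estimates largely in parallel with the binary case, reusing the machinery already in place. For \eqref{e:LoLoLo}, I would first run the exact analogue of Lemma \ref{l:lowlowpart}: expand $\widehat{\Lo}^3$ using the disjointness of the supports of $\Delta_{q_0}(\cdot-a/q)$ over $N(q)<Q$, so that
\[
\textup{Lo}\ast\textup{Lo}\ast\textup{Lo}(x)=\bigl(M_N^\omega\ast\check\Delta_{q_0}\bigr)^{\ast 3}(x)\sum_{q\ Q\text{-smooth}}\frac{\tau_q(x)}{\phi(q)^3}\Bigl(\sum_{a\in\A_q}e(\langle a/q,x\rangle)\Bigr)^{\!\!?},
\]
and then iterate Cohen's identity (Lemma \ref{xplusrestimate-lm}) three times, exactly as in the passage leading to \eqref{e:tautau}, to collapse the triple sum over $\A_q$ into $\tau_q(x)\tau_{\bar q}(1)^3=\tau_q(x)\mu(q)^2\cdot\mu(q)$; since $q$ is square-free $\mu(q)^3=\mu(q)=|\mu(q)|\cdot(\pm1)$, but the relevant product over primes will again factor and I expect the sign to wash out, yielding
\[
\textup{Main}(x)=M_N^\omega\ast M_N^\omega\ast M_N^\omega(x)\sum_{q\ Q\text{-smooth}}\frac{|\mu(q)|}{\phi(q)^3}\tau_q(x).
\]
The arithmetic factor is then handled by the multiplicative/Euler-product argument of Lemma \ref{l:HLConstantlemma} — it is a product over primes $\rho$ with $N(\rho)<Q$ of $1+\tau_\rho(x)/\phi(\rho)^3$, each factor is positive and the product converges since $\sum_\rho \phi(\rho)^{-3}<\infty$; crucially, for \emph{odd} $x$ there is no constraint forcing the $\rho=1+i$ factor to vanish (that obstruction was specific to evenness), so the product is $\gg 1$. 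The analytic factor $M_N^\omega\ast M_N^\omega\ast M_N^\omega(x)$ is bounded below by $\delta(x)^2 N^{-1}\gg (\log N)^{-2B/3}N^{-1}$ on $T_N^\omega$ by the ternary estimate of Lemma \ref{l:convolve}, since on $T_N^\omega$ we have $N(x)>\tfrac34 N$ and $\delta(x)\sqrt N\gg(\log N)^{-B/3}\sqrt N\to\infty$. Multiplying gives \eqref{e:LoLoLo}.

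For \eqref{e:Err}, the term $\textup{Err}=\textup{Hi}\ast A_N\ast A_N+\textup{Lo}\ast\textup{Hi}\ast A_N+\textup{Lo}\ast\textup{Lo}\ast\textup{Hi}$ is controlled by an $\ell^\infty$ estimate obtained from an $\ell^2\to\ell^2$ bound on one factor and Young-type bounds on the rest. The key input is Lemma \ref{l:Hi}: $\|\textup{Hi}\|_{\ell^2\to\ell^2}\ll Q^{-1+\epsilon}=(\log N)^{-B+\epsilon}$. I would estimate, e.g., $\|\textup{Lo}\ast\textup{Lo}\ast\textup{Hi}\|_\infty\le\|\textup{Lo}\ast\textup{Lo}\|_{\ell^2}\|\textup{Hi}\|_{\ell^2\to\ell^2}\cdot(\text{something})$ — more carefully, convolution by the finite-measure kernel $\textup{Lo}$ is bounded on $\ell^2$ with norm $\ll\|\widehat{\textup{Lo}}\|_\infty\ll N^{-1}$ (from \eqref{e:expsum} and the Gauss-sum bound), $A_N$ likewise has $\|\widehat{A_N}\|_\infty\ll 1$ after the Vinogradov/major-arc analysis, and one factor of $\textup{Hi}$ contributes $(\log N)^{-B+\epsilon}$, while passing from $\ell^2$ to $\ell^\infty$ costs the total mass $\ll N$ of the remaining kernels; bookkeeping the powers of $N$ gives each piece $\ll [N(\log N)^{B-3}]^{-1}$ provided $B$ in \eqref{e:Hi} is taken large enough (which we are free to do). The term $\textup{Error}$ in \eqref{e:Main3} is the ternary analogue of the three-term decomposition $E_1+E_2+E_3$ from the proof of \eqref{e:Error}: $E_1$ comes from $1-\Delta_{q_0}^3$ and is $O(Q^{-3})$-small on $N(\xi)<Q^{-4}$, giving an $\ell^2$ (hence $\ell^\infty$ after accounting for support) bound of the right size; $E_2$ (denominators between $Q$ and $N^{1/8}$) uses Proposition \ref{p:BourgainRamanujan} with a large moment $k$ to show $\sum_{2^s<N(q)\le2^{s+1}}|\tau_q(x)|\ll 2^{3s/2}$ off a sparse set, but since here we want an $\ell^\infty$ bound I would instead use the deterministic bound $|\tau_q(x)|\ll N((x,q))N(q)^\epsilon$ together with the divisor-function argument of the $E_3$ case; $E_3$ (denominators $\ge N^{1/8}$, possibly as large as $e^{c(\log N)^B}$) is handled exactly as in the binary proof — write $(x,q)=d$, $q=q'd$, sum $N(d)\mathbf1_{d\mid x}$ over $Q$-smooth $d$ and count $Q$-smooth $q'$ in the dyadic block, obtaining $\ll 2^{-s/2}$ per block, summable, hence $E_3(x)\ll N^{-17/16}$. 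Since $M_N^\omega\ast M_N^\omega\ast M_N^\omega(x)\ll N^{-1}$ the arithmetic savings translate directly into the claimed $[N(\log N)^{B-3}]^{-1}$ bound.

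The main obstacle I anticipate is the bookkeeping of powers of $N$ and $\log N$ in the $\textup{Err}$ term: unlike the binary case, one of the three convolution factors is a full $A_N^\omega$ (not a $\textup{Lo}$ or $\textup{Hi}$), so one must be careful that the crude $\|\widehat{A_N}\|_\infty\ll1$ bound (valid after Theorems \ref{theorem:kernel_approximation} and \ref{vingradovgaussian}) together with the $\ell^2$-to-$\ell^\infty$ mass factor $\ll N$ still leaves room for the $(\log N)^{-B+\epsilon}$ gain from $\textup{Hi}$ to dominate the required $(\log N)^{-(B-3)}$; this forces choosing the exponent in Lemma \ref{l:Hi} somewhat larger than $B$, which is harmless. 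A secondary point requiring care is verifying that the sign ambiguity $\tau_{\bar q}(1)^3=\mu(q)^3$ does not spoil positivity of the ternary Main term — but because the sum factors as an Euler product $\prod_\rho(1+\tau_\rho(x)/\phi(\rho)^3)$ with each local factor manifestly nonnegative (for $\rho\mid x$ it is $1+1/\phi(\rho)^2>0$, for $\rho\nmid x$ it is $1-1/\phi(\rho)^3>0$ once $N(\rho)\ge2$), positivity and the lower bound $\gg1$ are immediate for every $x$, with no evenness hypothesis needed. Everything else is a routine transcription of the binary arguments already carried out above.
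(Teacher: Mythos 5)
There is a genuine gap, and it sits at the heart of \eqref{e:LoLoLo}: the sign does \emph{not} wash out, and parity does not become irrelevant. Iterating Cohen's identity (Lemma \ref{xplusrestimate-lm}) as in \eqref{e:tautau} gives $\sum_{a\in\A_q}\tau_q(a)^3e(\langle a/q,x\rangle)=\tau_{\bar q}(1)^3\,\tau_q(x)=\mu(q)^3\tau_q(x)=\mu(q)\tau_q(x)$ for square-free $q$, so the arithmetic factor in the ternary Main term is the \emph{signed} sum
\begin{equation}
\sum_{\textup{$q$ is $Q$-smooth}}\frac{\mu(q)}{\phi(q)^3}\tau_q(x)
=\prod_{N(\rho)<Q}\Bigl(1-\frac{\tau_\rho(x)}{\phi(\rho)^3}\Bigr),
\end{equation}
not the unsigned one you wrote. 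At the special prime $\rho=1+i$ (the only prime with $\phi(\rho)=1$) the local factor is $1-\phi(1+i)^{-2}=0$ when $1+i\mid x$ (i.e.\ $x$ even) and $1+\phi(1+i)^{-3}=2$ when $x$ is odd: this is exactly why $T^\omega_N$ in \eqref{e:Twn} requires $x$ odd, and why the lower bound cannot hold ``for every $x$'' as you claim. Worse, with your unsigned version the $1+i$ factor would be $1+\tau_{1+i}(x)=1-1=0$ for \emph{odd} $x$, so your Main term would vanish identically on precisely the integers where \eqref{e:LoLoLo} is asserted; and your statement that $1-\phi(\rho)^{-3}>0$ ``once $N(\rho)\ge2$'' fails at $\rho=1+i$. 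The correct argument (as in the paper) keeps $\mu(q)$, singles out $\rho=1+i$, and uses oddness of $x\in T^\omega_N$ to get $h_3(x)\mathcal G_3\gg1$, combined with the lower bound on $\tilde M$ from Lemma \ref{l:convolve} (note the paper's $\tilde M$ retains the three $\check\Delta_{q_0}$ factors, which are nonnegative Fej\'er-type kernels, so no separate ``$1-\Delta^3$'' error arises).

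Your treatment of \eqref{e:Err} also has a real problem. Because the $\check\Delta_{q_0}$'s are kept inside $\tilde M$, the ternary $\textup{Error}$ is purely the arithmetic tail over $Q$-smooth $q$ with $N(q)\ge Q$, and the paper bounds $\lVert\textup{Error}\rVert_\infty$ by $\lVert\widehat{\textup{Error}}\rVert_{L^1(\mathbb T^2)}$, where $\lvert\tau_q(a)\rvert\le1$ for $a\in\A_q$ and $\lVert\widehat{M_N^\omega}{}^3\Delta_{q_0}^3\rVert_{L^1}\ll N^{-1}$, giving $N^{-1}\sum_{q\ge Q}\phi(q)^{-2+\epsilon}\ll N^{-1}Q^{-1+\epsilon}$ with no $x$-dependence at all. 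Your plan to transplant the binary $E_1,E_2,E_3$ analysis cannot produce the $\ell^\infty$ bound demanded by \eqref{e:Err}: Proposition \ref{p:BourgainRamanujan} only yields distributional (off-exceptional-set) control, and your deterministic fallback $\lvert\tau_q(x)\rvert\ll N((x,q))$ in the range $Q\le N(q)\le N^{1/8}$ introduces the number of smooth divisors of $x$, which can exceed any fixed power of $\log N$, so it cannot give $(\log N)^{-(B-3)}$ uniformly in $x$. Finally, the bookkeeping for $\textup{Err}$ contains a false estimate: $\lVert\widehat{\Lo}\rVert_\infty$ is of order $1$ near the rational frequencies, not $\ll N^{-1}$; the correct scheme is $\lVert\Hi\ast\phi_2\ast\phi_3\rVert_\infty\le\lVert\widehat\Hi\rVert_\infty\lVert\phi_2\rVert_{\ell^2}\lVert\phi_3\rVert_{\ell^2}$ with $\lVert\widehat\Hi\rVert_\infty\ll Q^{-1+\epsilon}$ from \eqref{e:Hi} and $\lVert A_N^\omega\rVert_{\ell^2}^2\ll N^{-1}\log^2N$ (similarly for $\Lo$), which is how the paper reaches $[N(\log N)^{B-3}]^{-1}$.
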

Recalling the definition of $Q$-smooth from \eqref{e:Main}, 
we set 
\begin{equation} \label{e:ternary1}
\textup{Main}(x) \coloneqq 
\tilde M (x) \sum_{\textup{$q$ is $Q$ smooth}}\frac{\mu(q)}{\phi(q)^3}\tau_q(x), 
\end{equation}
where 
\begin{equation}
\tilde M(x) \coloneqq M_N^{\omega}\ast M_N^{\omega}\ast M_N^{\omega}\ast \widecheck{\Delta_{q_0}}\ast\widecheck{\Delta_{q_0}}\ast\widecheck{\Delta_{q_0}}. 
\end{equation}

\begin{proof}[Proof of \eqref{e:LoLoLo}] 
The details here are very close to those of the binary case.  
It is a consequence of Lemma \ref{l:convolve} that 
\begin{equation}
\inf _{x\in T^\omega _N} \tilde M(x) \gg 
[   N (\log N) ^{2B/3} ]  ^{-1}
\end{equation} 
 In the arithmetic part of \eqref{e:ternary1}, we have the M\"obius function $\mu (q)$, instead of $\lvert \mu (q) \rvert$ 
as in the binary case, and a third power of the totient function.  Using the multiplicative properties, we have 
\begin{align}
    \sum_{\textup{$q$ is $Q$ smooth}}\frac{\mu(q)}{\phi(q)^3}\tau_q(x)
    &=\prod_{N(p)<Q,p|x}\left(1-\frac{1}{\phi^2(p)}\right)\prod_{N(p)<Q,(p,x)=1}\left(1+\frac{1}{\phi^3(p)}\right)
    \\&\coloneqq h_3(x)\mathcal{G}_3
\end{align}
Parity is again crucial. We have 
\begin{equation}
1-\frac{\tau_{1+i}(x)}{\phi^3(1+i)} = 
\begin{cases}
0  & \textup{$x$ even}
\\
2  & \textup{$x$ odd}
\end{cases}
\end{equation}
That is, $h_3(x)$ is positive for odd $x$. 
Note that $\mathcal{G}_3=O(1)$, because
\begin{align}
    \mathcal{G}_3 = \prod_{N(p)<Q,(p,x)=1}\left(1+\frac{1}{\phi^3(p)}\right) < \sum_{n} \frac{|\mu(n)|}{N(n)^2} = O(1).
\end{align}
This completes the proof of \eqref{e:LoLoLo}.
\end{proof}

\begin{proof}[Proof of \eqref{e:Err}] 

There are two estimates to prove. 
The first is to bound the $ \ell^\infty$ norm of 
\begin{align}
\textup{Error} &\coloneqq 
\textup{Lo} \ast \textup{Lo} \ast \textup{Lo}
-\textup{Main} . 
\end{align}
Switch to Fourier variables. We have  
\begin{align}
\widehat{
\textup{Lo} }(\xi ) ^{3} 
= \sum _{q<Q} \sum _{a \in \mathbb{A} _q} 
\widehat{\tilde M}(\xi - a/q) \frac{\tau _q (a) ^{3}}{\phi(q)^3} . 
\end{align}
It follows that 
\begin{align}
\widehat{\textup{Error}} (\xi ) 
= \sum _{ \substack {q \textup{\ $Q$-smooth} \\ q\geq Q}}
 \sum _{a \in \mathbb{A} _q} 
\widehat{\tilde M}(\xi - a/q)\frac{\tau _q (a) ^{3}}{\phi(q)^3}. 
\end{align}
So, the $\ell^\infty$ norm of $\textup{Error}$ is at 
most the $L^1(\mathbb{T} ^2 )$ norm of the expression above. That is at most 
\begin{align}
\sum _{ \substack {q \textup{\ $Q$-smooth} \\ q\geq Q}}
\Bigl\lVert \sum _{a \in \mathbb{A} _q}
\widehat{\tilde M}(\xi - a/q)  &  \tau _q (a) ^{3} \Bigr\rVert _{L^1(\mathbb{T} ^2 )} 
\\
& \ll 
N ^{-1} 
\sum _{ \substack {q\geq Q}}
\phi (q) ^{-2+\epsilon }  \ll N ^{-1}  Q ^{-1+\epsilon } .
\end{align}
By our choice of $Q = (\log N)^B$, this estimate meets our requirements. 
\bigskip 

The second estimate concerns the term $\textup{Err}$. 
The term $\textup{Err} $ is a sum of three terms of the form $\phi _1 \ast \phi _2 \ast \phi _3$, 
where $\phi _j \in  \{  A_N, \textup{Hi}, \textup{Lo} \}$. And, at least one of the terms is a High term. See \eqref{e:Err3terms}.  We control each term.  To fix ideas, consider 
\begin{align}
    \lVert \Hi\ast A_N^{\omega}\ast A_N^{\omega}\rVert_{\infty} &\leq 
    \lVert \Hi\ast A_N^{\omega}\rVert_{2} \lVert  A_N^{\omega}\rVert_{2}
\\ 
&\leq \lVert \widehat{\Hi}\widehat{ A_N^{\omega}}\rVert_{2} \lVert  \widehat{A_N^{\omega}}\rVert_{2}
\\
    &\leq  \lVert \widehat{\Hi}\rVert_{\infty} \lVert  \widehat{A_N^{\omega}}\rVert_{2}^2
\end{align}
where the last inequality is trivial.
Now, $\lVert \widehat{\Hi}\rVert_{\infty} \ll Q^{-1+\epsilon}$, by \eqref{e:Hi}. Recall that $Q= (\log N)^B$, for an integer $B>10$. 
And, 
\begin{align}
\lVert \widehat{A_N^{\omega}}\rVert_{2}^2 
&= \lVert {A_N^{\omega}}\rVert_{2}^2 
\\
&\ll  N^{-2} \sum_{N(n)<N,\arg(n)\in \omega}\Lambda(n)^2 
\\ &\ll N^{-1}(\log N)^2 
\end{align}
We see that $ \lVert \Hi\ast A_N^{\omega}\ast A_N^{\omega}\rVert_{\infty}
\ll N^{-1} (\log N)^{-(1-\epsilon)B-2}$. That completes this case. 

The second term to control is 
\begin{align}
     \lVert \Hi\ast \Lo \ast \Lo \rVert_{\infty} 
     &\leq  \lVert \widehat{\Hi}\rVert_{\infty} \lVert  \Lo \rVert_{2}^2. 
\end{align}
To estimate this last term $ \lVert  \Lo \rVert_{2}^2$, use \eqref{e:LOOO}, which gives a better estimate than the first term. 
The third term to control is 
\begin{align}
     \lVert \Hi\ast A_N ^\omega  \ast \Lo \rVert_{\infty} 
     &\leq  \lVert \widehat{\Hi}\rVert_{\infty} 
     \lVert A_N^\omega \rVert_2
     \lVert  \Lo \rVert_{2}. 
\end{align}
But the right hand side is the geometric mean of the other two terms. That completes the proof. 

\end{proof}

\begin{bibdiv}

    \begin{biblist}
\bib{MR4206429}{article}{
   author={Baier, Stephan},
   author={Technau, Marc},
   title={On the distribution of $\alpha p$ modulo one in imaginary
   quadratic number fields with class number one},
   language={English, with English and French summaries},
   journal={J. Th\'{e}or. Nombres Bordeaux},
   volume={32},
   date={2020},
   number={3},
   pages={719--760},
   issn={1246-7405},
   review={\MR{4206429}},
}

\bib{MR1209299}{article}{
   author={Bourgain, J.},
   title={Fourier transform restriction phenomena for certain lattice
   subsets and applications to nonlinear evolution equations. I.
   Schr\"{o}dinger equations},
   journal={Geom. Funct. Anal.},
   volume={3},
   date={1993},
   number={2},
   pages={107--156},
   issn={1016-443X},
   review={\MR{1209299}},
   doi={10.1007/BF01896020},
}
    
\bib{MR1019960}{article}{
   author={Bourgain, Jean},
   title={Pointwise ergodic theorems for arithmetic sets},
   note={With an appendix by the author, Harry Furstenberg, Yitzhak
   Katznelson and Donald S. Ornstein},
   journal={Inst. Hautes \'{E}tudes Sci. Publ. Math.},
   number={69},
   date={1989},
   pages={5--45},
   issn={0073-8301},
   review={\MR{1019960}},
}

\bib{BP1}{article}{
  title={On the pointwise ergodic theorem on L p for arithmetic sets},
  author={Bourgain, Jean},
  journal={Israel journal of Mathematics},
  volume={61},
  number={1},
  pages={73--84},
  year={1988},
  publisher={Springer}
}

\bib{MR4434278}{article}{
   author={Giannitsi, Christina},
   author={Lacey, Michael T.},
   author={Mousavi, Hamed},
   author={Rahimi, Yaghoub},
   title={Improving and maximal inequalities for primes in progressions},
   journal={Banach J. Math. Anal.},
   volume={16},
   date={2022},
   number={3},
   pages={Paper No. 42, 25},
   issn={2662-2033},
   review={\MR{4434278}},
   doi={10.1007/s43037-022-00191-9},
}

\bib{MR4072599}{article}{
   author={Han, Rui},
   author={Krause, Ben},
   author={Lacey, Michael T.},
   author={Yang, Fan},
   title={Averages along the primes: improving and sparse bounds},
   journal={Concr. Oper.},
   volume={7},
   date={2020},
   number={1},
   pages={45--54},
   review={\MR{4072599}},
   doi={10.1515/conop-2020-0003},
}

\bib{MR238760}{article}{
  author={Holben, C. A.},
  author={Jordan, J. H.},
  title={The twin prime problem and Goldbach's conjecture in the Gaussian integers},
  journal={Fibonacci Quart.},
  volume={6},
  date={1968},
  number={5},
  pages={81--85, 92},
  issn={0015-0517},
  review={\MR {238760}},
}

\bib{MR2331072}{book}{
   author={Harman, Glyn},
   title={Prime-detecting sieves},
   series={London Mathematical Society Monographs Series},
   volume={33},
   publisher={Princeton University Press, Princeton, NJ},
   date={2007},
   pages={xvi+362},
   isbn={978-0-691-12437-7},
   review={\MR{2331072}},
}

\bib{MR4045111}{article}{
   author={Harman, Glyn},
   title={Diophantine approximation with Gaussian primes},
   journal={Q. J. Math.},
   volume={70},
   date={2019},
   number={4},
   pages={1505--1519},
   issn={0033-5606},
   review={\MR{4045111}},
   doi={10.1093/qmathj/haz038},
}

\bib{MR2061214}{book}{
   author={Iwaniec, Henryk},
   author={Kowalski, Emmanuel},
   title={Analytic number theory},
   series={American Mathematical Society Colloquium Publications},
   volume={53},
   publisher={American Mathematical Society, Providence, RI},
   date={2004},
   pages={xii+615},
   isbn={0-8218-3633-1},
   review={\MR{2061214}},
   doi={10.1090/coll/053},
}

\bib{MR4512201}{book}{
   author={Krause, Ben},
   title={Discrete analogues in harmonic analysis---Bourgain, Stein, and
   beyond},
   series={Graduate Studies in Mathematics},
   volume={224},
   publisher={American Mathematical Society, Providence, RI},
   date={[2022] \copyright 2022},
   pages={xxvi+563},
   isbn={[9781470468576]},
   isbn={[9781470471743]},
   isbn={[9781470471750]},
   review={\MR{4512201}},
}

\bib{210110401L}{article}{
       author = {{Lacey}, Michael T.},
       author = {{Mousavi}, Hamed},
        author = {{Rahimi}, Yaghoub},
        title = {Endpoint $ \ell ^{r}$ improving estimates for Prime averages},
          doi = {10.48550/arXiv.2101.10401},
}

\bib{MR3299842}{article}{
   author={Mirek, Mariusz},
   title={$\ell^p(\mathbb {Z})$-boundedness of discrete maximal functions along
   thin subsets of primes and pointwise ergodic theorems},
   journal={Math. Z.},
   volume={279},
   date={2015},
   number={1-2},
   pages={27--59},
   issn={0025-5874},
   review={\MR{3299842}},
   doi={10.1007/s00209-014-1356-9},
}

\bib{MR3370012}{article}{
   author={Mirek, Mariusz},
   author={Trojan, Bartosz},
   title={Cotlar's ergodic theorem along the prime numbers},
   journal={J. Fourier Anal. Appl.},
   volume={21},
   date={2015},
   number={4},
   pages={822--848},
   issn={1069-5869},
   review={\MR{3370012}},
   doi={10.1007/s00041-015-9388-z},
}

\bib{MR3646766}{article}{
   author={Mirek, Mariusz},
   author={Trojan, Bartosz},
   author={Zorin-Kranich, Pavel},
   title={Variational estimates for averages and truncated singular
   integrals along the prime numbers},
   journal={Trans. Amer. Math. Soc.},
   volume={369},
   date={2017},
   number={8},
   pages={5403--5423},
   issn={0002-9947},
   review={\MR{3646766}},
   doi={10.1090/tran/6822},
}

\bib{MR136590}{article}{
  author={Mitsui, Takayoshi},
  title={On the Goldbach problem in an algebraic number field. I, II},
  journal={J. Math. Soc. Japan},
  volume={12},
  date={1960},
  pages={290--324, 325--372},
  issn={0025-5645},
  review={\MR {136590}},
  doi={10.2969/jmsj/01230290},
}

\bib{MONTGOMERY}{book}{
  title={Ten lectures on the interface between analytic number theory and harmonic analysis},
  author={Montgomery, Hugh L},
  number={84},
  year={1994},
  publisher={American Mathematical Soc.}
}

\bib{MR3069422}{article}{
  author={Rademacher, Hans},
  title={\"{U}ber die darstellung totalpositiver zahlen als summe von totalpositiven primzahlen im reell-quadratischen zahlk\"{o}rper},
  language={German},
  journal={Abh. Math. Sem. Univ. Hamburg},
  volume={3},
  date={1924},
  number={1},
  pages={109--163},
  issn={0025-5858},
  review={\MR {3069422}},
  doi={10.1007/BF02954619},
}

\bib{MR3069434}{article}{
  author={Rademacher, Hans},
  title={\"{U}ber die darstellung von k\"{o}rperzahlen als summe von primzahlen im imagin\"{a}r-quadratischen zahlk\"{o}rper},
  language={German},
  journal={Abh. Math. Sem. Univ. Hamburg},
  volume={3},
  date={1924},
  number={1},
  pages={331--378},
  issn={0025-5858},
  review={\MR {3069434}},
  doi={10.1007/BF02954631},
}

\bib{ankeny}{article}{title={Representations of primes by quadratic forms},
  author={Ankeny, NC},
  journal={American Journal of Mathematics},
  volume={74},
  number={4},
  pages={913--919},
  year={1952},
  publisher={JSTOR}}

\bib{MR995574}{article}{
   author={Wierdl, M\'{a}t\'{e}},
   title={Pointwise ergodic theorem along the prime numbers},
   journal={Israel J. Math.},
   volume={64},
   date={1988},
   number={3},
   pages={315--336 (1989)},
   issn={0021-2172},
   review={\MR{995574}},
   doi={10.1007/BF02882425},
}

\end{biblist} 

\end{bibdiv}
\end{document}